\numberwithin{equation}{section}
\theoremstyle{plain}
\newtheorem*{theorem*}{Theorem}
\newtheorem{theorem}{Theorem}
\numberwithin{theorem}{section}
\newtheorem{lemma}[theorem]{Lemma}
\newtheorem{corollary}[theorem]{Corollary}
\newtheorem{conjecture}[theorem]{Conjecture}
\theoremstyle{definition}
\newtheorem{definition}[theorem]{Definition}
\newtheorem{remark}[theorem]{Remark}
\newtheorem{example}[theorem]{Example}
\newcommand{\sign}{\text{sign}}
\newcommand{\rk}{\text{rk}}
\newcommand{\bR}{\mathbb{R}}
\newcommand{\im}{\text{Im}}
\newcommand{\cO}{\mathcal{O}}
\newcommand{\cM}{\mathcal{M}}
\newcommand{\cL}{\mathcal{L}}
\renewcommand{\O}{\Omega}
\newcommand{\pa}{\text{pa}}
\newcommand{\des}{\text{des}}
\newcommand{\bfP}{\mathbf{P}}
\newcommand{\oLambda}{\overline{\Lambda}}
\newcommand{\olambda}{\overline{\lambda}}
\newcommand{\opa}{\overline{\text{pa}}}
\DeclareMathOperator{\PD}{\mathit{PD}}
\newcommand{\can}{\text{can}}
\newcommand{\lat}{\text{lat}}
\newcommand{\elr}{\text{elr}}
\newcommand{\lr}{\text{lr}}
\newcommand{\lp}{\text{lp}}
\newcommand{\flow}{\text{flow}}
\newcommand{\reg}{\text{reg}}
\newcommand{\algorithmicbreak}{\textbf{break}}
\newcommand{\BREAK}{\STATE \algorithmicbreak}
\newcommand{\footremember}[2]{%
    \footnote{#2}
    \newcounter{#1}
    \setcounter{#1}{\value{footnote}}%
}
\title{Trek-Based Parameter Identification for Linear Causal Models With Arbitrarily Structured Latent Variables}
\author{
Nils Sturma\footremember{nils}{Technical University of Munich, Germany; \href{mailto:nils.sturma@tum.de}{nils.sturma@tum.de}}
\and 
Mathias Drton\footremember{mathias}{Technical University of Munich and Munich Center for Machine Learning, Germany;
\href{mailto:mathias.drton@tum.de}{mathias.drton@tum.de}}  
}
\date{ }
\begin{document}
\maketitle

\begin{abstract}
We develop a criterion to certify whether causal effects are identifiable in linear structural equation models with latent variables. Linear structural equation models correspond to directed graphs whose nodes represent the random variables of interest and whose edges are weighted with linear coefficients that correspond to direct causal effects. In contrast to previous identification methods, we do not restrict ourselves to settings where the latent variables constitute independent latent factors (i.e., to source nodes in the graphical representation of the model). Our novel latent-subgraph criterion is a purely graphical condition that is sufficient for identifiability of causal effects by rational formulas in the covariance matrix. To check the latent-subgraph criterion, we provide a sound and complete algorithm that operates by solving an integer linear program. While it targets effects involving observed variables, our new criterion is also useful for identifying effects between latent variables, as it allows one to transform the given model into a simpler measurement model for which other existing tools become applicable.
\end{abstract}

\section{Introduction}
The problem of identifiability of causal effects amounts to studying whether it is feasible to infer cause-effect relationships under clearly detailed assumptions about the process that generates the observed data. Determining whether causal effects are identifiable is crucial for any downstream task, such as robust estimation of effects, inferring interventional distributions, or answering counterfactual queries \citep{pearl2009causality}.  Much attention has been paid to models in which latent variables solely act as confounders that induce correlation between errors in structural equations \citep{kumor2020efficient, weihs2017determinantal, foygel2012halftrek}. However, for drawing inference in complex data where one is also interested in causal relations among latent variables, the problem of causal effect identifiability is in large parts unsolved.

In this paper, we study linear causal models given by flexible structural equation models that explicitly consider latent variables. The causal parameters of interest are the linear coefficients appearing in the equations. The dominant approach in state-of-the-art methods to handle settings with explicitly modeled latent variables is to transform the model into a \emph{canonical model}, where each latent variable is an independent factor \citep{tramontano2024causal, salehkaleybar2020learning, hoyer2008estimation}. This comes with several drawbacks. It removes all causal relations between latent variables and thus makes inference of these relations infeasible. However, in a broad range of applied sciences \citep{abbring2025wright, stoetzer2024causal, mayer2019causal, bollen2011three, sachs2005causal} as well as in the recently evolving field of causal representation learning \citep{saengkyongam2024identifying, sturma2023unpaired, squires2023linear, schoelkopf2021toward}, the causal relations between the latent variables are of most interest. Moreover, in contrast to what is usually presumed in the literature, we show that the transformed canonical model is not identical to the original model, and identifiability of causal effects can hold in one of them while it fails in the other. 

Concretely, we study linear structural equation models defined as follows. Let $G=(V,D)$ be a directed graph and let $X=(X_v)_{v \in V}$ be a collection of random variables that are indexed by the nodes $V$ of the graph. We suppose that all variables are related by the linear equation system
\begin{equation} \label{eq:full-sem}
    X = \Lambda^{\top}X+\varepsilon,
\end{equation}
where $\varepsilon=(\varepsilon_v)_{v\in V}$ is a collection of independent mean zero random variables with finite variance $\phi_v > 0$. The coefficient matrix $\Lambda=(\lambda_{vw})$ is sparse according to the graph $G$, that is, $\lambda_{vw}\neq 0$ only if $v \rightarrow w \in D$. Hence, each graph defines a different model via a different set of equations~\eqref{eq:full-sem}. Crucially, we allow that some of the random variables $X_v$ are latent. That is, the set of nodes is a disjoint union, denoted $V=\cO \sqcup \cL$, where nodes in $\cO$ index the observed variables and nodes in $\cL$ index the latent variables. We also say that $\cO$ are the \emph{observed nodes} and $\cL$ are the \emph{latent nodes}. 

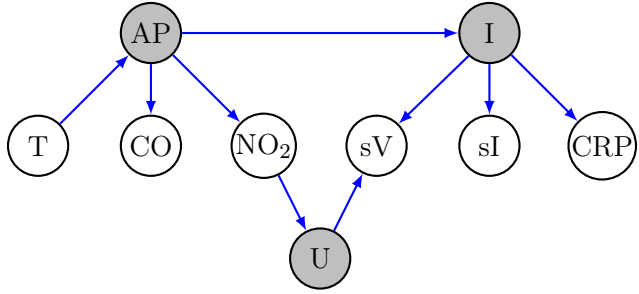
\begin{figure}[t]
    \centering
    \tikzset{
      every node/.style={circle, inner sep=0.15mm, minimum size=0.8cm, draw, thick, black, fill=white, text=black},
      every path/.style={thick}
    }
    \begin{tikzpicture}[align=center]
      \node[fill=lightgray] (h1) at (1.5,1.5) {AP};
      \node[fill=lightgray] (h2) at (6,1.5) {I};
      \node[fill=lightgray] (h3) at (3.75,-1.5) {U};
      \node[] (1) at (0,0) {T};
      \node[] (2) at (1.5,0) {CO};
      \node[] (3) at (3,0) {NO$_2$};
      \node[] (4) at (4.5,0) {sV};
      \node[] (5) at (6,0) {sI};
      \node[] (6) at (7.5,0) {CRP};

      \draw[blue] [-latex] (1) edge (h1);
      \draw[blue] [-latex] (3) edge (h3);
      \draw[blue] [-latex] (h1) edge (h2);
      \draw[blue] [-latex] (h1) edge (2);
      \draw[blue] [-latex] (h1) edge (3);
      \draw[blue] [-latex] (h2) edge (4);
      \draw[blue] [-latex] (h2) edge (5);
      \draw[blue] [-latex] (h2) edge (6);
      \draw[blue] [-latex] (h3) edge (4);
    \end{tikzpicture}
    \caption{Graph corresponding to a study for the effect of air pollution on inflammation. Gray nodes correspond to latent random variables.}
    \label{fig:air-pollution}
\end{figure}

\begin{example}
    We consider an augmented version of an example from \citet{baja2013structural}, which pertains to the effect of air pollution (AP) on inflammation (I); see Figure \ref{fig:air-pollution}. The inflammation of different individuals is measured via C-reactive protein levels (CRP), soluble vascular cell adhesion molecule-1 levels (sV) and soluble intracellular adhesion molecule-1 levels (sI), while at the same time air pollution is measured via carbon monoxide (CO) and nitrogen dioxide (NO$_2$) concentration. In classical analysis of structural equation models, it is impossible to include observed nodes that have an effect on the latent nodes. However, traffic density (T) certainly has a direct effect on air pollution and it can be insightful to include such a variable. Moreover, one might worry about indirect effects of high concentrations of air pollution measurements on the measurement of inflammation and therefore include further latent nodes (U).
\end{example}

The coefficients $\lambda_{vw}$ appearing in the equations in \eqref{eq:full-sem} correspond to the edges $v \rightarrow w$ in the graph, and are also known as \emph{direct causal effects}. Identifiability then refers to the question whether it is possible to recover the direct causal effects from the joint distribution of the  observed variables $X_{\cO}=(X_v)_{v \in \cO}$. Most interest is given to methods that only assume existence of the covariance matrix and provide explicit identification formulas for the direct effects, as this allows for simple estimation and inference \citep{henckel2024graphical, barber2022halftrek, kumor2020efficient, brito2006graphical}.

However, in flexible models without restrictions on the latent variables a main complication arises through the fact that even causal effects between observed variables can propagate through latent variables. For example, in Figure~\ref{fig:air-pollution} traffic density has an indirect causal effect on carbon monoxide concentration that is mediated by the latent variable air pollution. We denote by   $\olambda_{vw}$ the \emph{semi-direct effect} from $v$ to $w$ that is given by the sum of the direct effect plus indirect effects that propagate through latent nodes. To make this precise, given two sets of observed nodes $A,B \subseteq V$, we write $\Lambda_{A,B}$ for the submatrix that is obtained by taking rows indexed in $A$ and columns indexed in $B$. The matrix of semi-direct effects $\oLambda=(\olambda_{vw})_{v,w \in \cO}$ is then formally defined by
\begin{equation} \label{eq:semi-direct-effects}
    \oLambda = \Lambda_{\cO,\cO} +  \Lambda_{\cO,\cL} (I - \Lambda_{\cL,\cL})^{-1} \Lambda_{\cL,\cO},
\end{equation}
where entries in $\Lambda_{\cO,\cO}$ correspond to direct effects given by edges, and entries in $\Lambda_{\cO,\cL} (I - \Lambda_{\cL,\cL})^{-1} \Lambda_{\cL,\cO}$ correspond to directed paths that only visits latent nodes. Now, denote by $\Sigma$ the observed covariance matrix of $X_{\cO}$. A key observation is that $\Sigma$ factorizes as
\begin{equation} \label{eq:observed-covariance}
    \Sigma = (I - \oLambda)^{-\top} \Omega (I - \oLambda)^{-1}, 
\end{equation}
where $\Omega$ is a covariance matrix in a submodel. The submodel is defined by the subgraph $G_{\lat}$ on the same set of nodes $V=\cO \sqcup \cL$ obtained from removing all edges with tail being an observed node.   
A detailed derivation of the factorization \eqref{eq:observed-covariance} is given in Section~\ref{sec:preliminaries}. Equation~\eqref{eq:observed-covariance} is equivalent to $\Omega = (I - \oLambda)^{\top} \Sigma (I - \oLambda)$, which implies that $\Omega$ is also identifiable, whenever $\oLambda$ is identifiable from the covariance matrix $\Sigma$. Since $\Omega$ corresponds to the observed covariance matrix in much simpler measurement models that were already considered in \citet{bollen1989structural}, many tools are available to further investigate and identify the causal effects between latent variables \citep{sturma2025matching, xie2020generalized, bollen1989structural}. In short, identifying the semi-direct effect matrix $\oLambda$ allows us to transform our initial complex models to a simpler measurement model. The main result of this paper is the \emph{latent-subgraph criterion},  a graphical criterion for identifiability of $\oLambda$. To our knowledge, it is the first identifiability criterion for semi-direct effects without assuming that the latent nodes are source nodes. \looseness=-1

\begin{example} \label{ex:identifiying-latent-effects}
We continue the earlier example on air pollution shown in the graph in Figure~\ref{fig:air-pollution}. The semi-direct effect matrix is given by
\[
\oLambda = 
\begin{pmatrix}
    0 & \lambda_{\text{T}, \text{AP}} \lambda_{\text{AP}, \text{CO}}  & \lambda_{\text{T}, \text{AP}} \lambda_{\text{AP}, \text{NO}_2} & \lambda_{\text{T}, \text{AP}}\lambda_{\text{AP}, \text{I}} \lambda_{\text{I}, \text{sV}} & \lambda_{\text{T}, \text{AP}}\lambda_{\text{AP}, \text{I}} \lambda_{\text{I}, \text{sI}} & \lambda_{\text{T}, \text{AP}}\lambda_{\text{AP}, \text{I}} \lambda_{\text{I}, \text{CRP}}  \\
    0 & 0 & 0 & 0 & 0 & 0 \\
    0 & 0 & 0 & \lambda_{\text{NO}_2, \text{U}}\lambda_{\text{U}, \text{sV}} & 0 & 0 \\
    0 & 0 & 0 & 0 & 0 & 0 \\
    0 & 0 & 0 & 0 & 0 & 0 \\
    0 & 0 & 0 & 0 & 0 & 0 \\
\end{pmatrix}.
\]
The new latent-subgraph criterion is able to certify that the whole matrix $\oLambda$ is identifiable by rational formulas in the entries of the covariance matrix. For example, the semi-direct effect $\olambda_{\text{T}, \text{CO}}= \lambda_{\text{T}, \text{AP}} \lambda_{\text{AP}, \text{CO}}$ from traffic density to carbon monoxide concentration is given by the standard regression coefficient $\Sigma_{\text{T}, \text{CO}}/ \Sigma_{\text{T}, \text{T}}$. Now, the matrix $\Omega=(\omega_{uv})$ in this example corresponds to the observed covariance  matrix of the model given by the graph where we removed the two edges $\text{T} \to \text{AP}$ and $\text{NO}_2 \to \text{U}$.  It is then possible to identify the direct effect between the latent variables air pollution (AP) and inflammation (I) up to sign via the simple formula in the entries of $\Omega$ given by \looseness=-1
\begin{equation} \label{eq:formula-latent-effect}
    \sqrt{\frac{\omega_{\text{CO}, \text{CRP}}\omega_{\text{NO}_2, \text{sI}}}{\omega_{\text{CO}, \text{NO}_2}\omega_{\text{sI}, \text{CRP}} - \omega_{\text{CO}, \text{CRP}} \omega_{\text{NO}_2, \text{sI}}}} = \sqrt{\lambda_{\text{AP}, \text{I}}^2} = |\lambda_{\text{AP}, \text{I}}|,
\end{equation}
when fixing the scaling of the latent variables; see Appendix~\ref{sec:latent-effects} for a detailed derivation. We are not aware of any method for identification in the literature that would be able to certify identifiability of this effect. Note that identification of  $|\lambda_{\text{AP}, \text{I}}|$ in the simpler measurement model after removing the edges  $\text{T} \to \text{AP}$  and $\text{NO}_2 \to \text{U}$ is also given by the identification rules in \citet[Chapter 8]{bollen1989structural}. \looseness=-1
\end{example}

Our new latent-subgraph criterion for identification of the semi-direct effect matrix $\oLambda$ operates on the original, non-transformed model given in~\eqref{eq:full-sem} where any variable may be latent. Existing work mainly considers canonical models where direct effects can \emph{not} propagate through latent variables \citep{tramontano2024causal, kaltenpoth2023nonlinear, barber2022halftrek, salehkaleybar2020learning}.   Recently, some authors have also started to consider unrestricted latent structures. For example, \citet{ankan2023combining} considers identification of \emph{single} effects $\lambda_{vw}$, which is, however, also based on a transformation of the original model. \citet{dong2024parameter} derived a condition for the full matrix $\Lambda$, but this condition applies only in restricted settings where each latent variable must have at least two pure children.

Let $v \in \cO$ and $P \subseteq \cO$, and suppose we are interested in identifying the semi-direct effects $\oLambda_{P,v}=(\olambda_{pv})_{p \in P}$ from the nodes in $P$ into the node $v$. Our approach is to find a linear equation system 
\[
    \begin{pmatrix}
        \Sigma_{Y,P}  & \Omega_{Y,Z}
    \end{pmatrix}
    \begin{pmatrix}
        \oLambda_{P,v} \\
        \psi
    \end{pmatrix}    
     = \Sigma_{Y,v},
\]
where $Z \subseteq \cO \setminus P $ is a set of nodes of suitable size such that $(\Sigma_{Y,P} \,\,\, \Omega_{Y,Z})$ is a square matrix consisting of two blocks that are submatrices of  $\Sigma$ and  $\Omega$, and $\psi$ is a real-valued vector of suitable size. If $\Omega_{Y,Z}$ is already identified, and one additionally makes sure that $(\Sigma_{Y,P} \,\,\, \Omega_{Y,Z})$ has nonzero determinant, it follows that we can solve for the semi-direct effects $\oLambda_{P,v}$. Showing that the determinant of $(\Sigma_{Y,P} \,\,\, \Omega_{Y,Z})$ is nonzero requires new proof techniques. The well-known trek separation criterion is a  graph-theoretic characterization of when determinants of submatrices $\Sigma_{Y,P}$ of the covariance matrix are zero \citep{sullivant2010trek}. However, it is not applicable to block-matrices, where some blocks correspond to covariance matrices of subgraphs. We give a novel graph-theoretic criterion that we call \emph{trek separation in subgraphs} and that allows us to certify that the determinant of matrices of the form $(\Sigma_{Y,P} \,\,\, \Omega_{Y,Z})$ is nonzero.  It generalizes the usual notion of trek separation and has potential applications in theory for graphical models that goes beyond the work in this paper. 

Finally, we show that our novel latent-subgraph criterion for identification of semi-direct effects can be efficiently checked by repeatedly solving a certain integer linear program. It extends the standard maximum flow problem to settings where some flows are required to use only edges in a subgraph \citep{cormen2009introduction}.

The organization of the paper is as follows. In Section~\ref{sec:preliminaries}, we derive the factorization~\eqref{eq:observed-covariance}, we provide a precise definition of rational identifiability, and we introduce necessary graphical concepts. In Section~\ref{sec:lsc} we present our main result, the latent-subgraph criterion. In Section~\ref{sec:computation}, we provide an algorithm for checking our criterion by setting up an integer linear program. In Section~\ref{sec:canonical}, we discuss differences to the canonical model considered in previous research. Finally, in Section~\ref{sec:trek-separation}, we present our most important technical tool, the trek separation in subgraphs. The appendix contains all technical proofs (Appendix~\ref{sec:proof-main-result} and Appendix~\ref{sec:other-proofs}) and additional examples (Appendix~\ref{sec:examples}).

\section{Preliminaries} \label{sec:preliminaries}
In this section, we derive the factorization of the observed covariance matrix displayed in~\eqref{eq:observed-covariance}, we rigorously introduce rational identifiability of semi-direct effects and we introduce treks.

\subsection{Factorization of the Covariance Matrix}
By definition of the linear structural equation model in~\eqref{eq:full-sem}, the observed and latent random vectors $X_{\cO}$ and $X_{\cL}$ satisfy 
\begin{equation}\label{eq:block-sem}
\begin{aligned}
    X_{\cO} &= \Lambda_{\cO,\cO}^{\top}X_{\cO} + \Lambda_{\cL,\cO}^{\top} X_{\cL}+ \varepsilon_{\cO} \,\,\, \text{ and } \\
    X_{\cL} &=  \Lambda_{\cO,\cL}^{\top} X_{\cO} + \Lambda_{\cL,\cL}^{\top}X_{\cL} + \varepsilon_{\cL},
\end{aligned}
\end{equation}
where we abbreviate $\Lambda_{A,B}^{\top}=(\Lambda_{A,B})^{\top}$. Solving the upper equation for $X_{\cO}$ and the lower equation for $X_{\cL}$ yields
\[
    X_{\cO} = (I - \Lambda_{\cO,\cO})^{-\top}(\Lambda_{\cL,\cO}^{\top}X_{\cL} + \varepsilon_{\cO}) \quad \text{and} \quad X_{\cL} = (I - \Lambda_{\cL,\cL})^{-\top}(\Lambda_{\cO,\cL}^{\top}X_{\cO} + \varepsilon_{\cL}),
\]
where $I$ denotes identity matrices of suitable sizes. For illustrational purposes, we assume here that all inverses exist.  Plugging the equation for $X_{\cL}$ into the upper equation in~\eqref{eq:block-sem} gives
\begin{align*}
    X_{\cO} &= \Lambda_{\cO,\cO}^{\top}X_{\cO} + \Lambda_{\cL,\cO}^{\top} (I - \Lambda_{\cL,\cL})^{-\top}(\Lambda_{\cO,\cL}^{\top}X_{\cO} + \varepsilon_{\cL})+ \varepsilon_{\cO}, \\
    \iff X_{\cO} &=  \oLambda^{\top} X_{\cO} + \Lambda_{\cL,\cO}^{\top}  (I -\Lambda_{\cL,\cL})^{-\top} \varepsilon_{\cL} + \varepsilon_{\cO},\\
    \iff X_{\cO} &= (I - \oLambda)^{-\top} [\Lambda_{\cL,\cO}^{\top}  (I -\Lambda_{\cL,\cL})^{-\top} \varepsilon_{\cL} + \varepsilon_{\cO}],
\end{align*}
where $\oLambda=\Lambda_{\cO,\cO} +  \Lambda_{\cO,\cL} (I - \Lambda_{\cL,\cL})^{-1} \Lambda_{\cL,\cO}$ is the matrix of semi-direct effects from~\eqref{eq:semi-direct-effects}.  
We conclude that the observed covariance matrix is given by
\begin{align} \label{eq:sigma-2}
    \Sigma &=  (I - \oLambda)^{-\top} \Omega (I - \oLambda)^{-1},
\end{align}
where $\Omega$ is defined as the covariance matrix of $\Lambda_{\cL,\cO}^{\top}  (I -\Lambda_{\cL,\cL})^{-\top} \varepsilon_{\cL} + \varepsilon_{\cO}$. That is,
\[
\Omega := \Lambda_{\cL,\cO}^{\top}  (I -\Lambda_{\cL,\cL})^{-\top} \Phi_{\cL,\cL} (I -\Lambda_{\cL,\cL})^{-1} \Lambda_{\cL,\cO} + \Phi_{\cO,\cO},
\]
where $\Phi$ is the diagonal covariance matrix of the noise vector $\varepsilon$ with diagonal entries $\phi_v > 0$. Now, for the purpose of identifiability, we may identify the linear structural equation model with a set of  covariance matrices of the form~\eqref{eq:sigma-2}. 
To formally define this, we first introduce the necessary notation. Let $G=(\cO \sqcup \cL, D)$ be a graph and consider two subsets of nodes $U,W \subseteq \cO \sqcup \cL$. We denote by $D_{U,W}=D \cap (U \times W)$ the corresponding subset of edges that point from nodes in $U$ to nodes in $W$. We then write $\mathbb{R}^{D_{U,W}}$ for the space of real $|U| \times |W|$ matrices $\Lambda=(\lambda_{vw})$ with support $D_{U,W}$, that is, $\lambda_{uw}=0$ if $u \rightarrow w \not\in D_{U,W}$.  We denote $\mathbb{R}^{D}_{\reg} $ the subset of matrices $\Lambda \in \mathbb{R}^{D}$ for which both $I-\oLambda$ and $I-\Lambda_{\cL,\cL}$ are invertible. Finally, $\PD(p)$ is the cone of positive definite $p \times p $ matrices and $\mathbb{R}^{p}_{>0} \subseteq \PD(p)$ is the subset of diagonal positive definite matrices.

\begin{definition} \label{def:model}
The \emph{covariance model} given by a graph $G=(\cO \sqcup \cL, D)$ is the image $\cM(G)=\im(\tau_G)$ of the parametrization
\begin{equation} \label{eq:parametrization}
\begin{aligned} 
\tau_G: \mathbb{R}^D_{\reg} \times \mathbb{R}^{|\mathcal{O}| + |\cL|}_{>0}  &\longmapsto \PD(|\mathcal{O}|)\\
(\Lambda, \Phi) &\longmapsto (I - \oLambda)^{-\top} \Omega (I - \oLambda)^{-1},
\end{aligned}
\end{equation}
where the matrices $\oLambda =(\olambda_{vw}) \in \mathbb{R}^{D_{\cO,\cO}}$ and $\Omega=(\omega_{vw}) \in \PD(|\cO|)$ are given by
\begin{align*}
\oLambda &= \Lambda_{\cO,\cO} + \Lambda_{\cO,\cL} (I - \Lambda_{\cL,\cL})^{-1} \Lambda_{\cL,\cO} \quad \text{and}\\
\Omega &= \Lambda_{\cL,\cO}^{\top}  (I -\Lambda_{\cL,\cL})^{-\top} \Phi_{\cL,\cL} (I -\Lambda_{\cL,\cL})^{-1} \Lambda_{\cL,\cO} + \Phi_{\cO,\cO}.
\end{align*}
\end{definition}

\begin{example}
For the graph in Figure~\ref{fig:running-example} (a), we explicitly state the matrices $\Lambda$, $\Phi$, $\oLambda$, $\Omega$ and $\Sigma$. Due to space constraints, this is deferred to Appendix~\ref{sec:example-parameter}.
\end{example}

For brevity, we let $\Theta_G=\mathbb{R}^D_{\reg} \times \mathbb{R}^{|\cO| + |\cL|}_{>0}$ be the domain of the parametrization $\tau_G$. A crucial observation is that the matrix $\Omega$ lies in a simpler submodel. Define  $G_{\lat}=(\cO \sqcup \cL,D_{\lat})$ to be the \emph{latent subgraph} of $G$ obtained from deleting all edges with tail being an observed node, that is, 
\begin{align} \label{eq:latent-subgraph}
    D_{\lat} &:= \{v \rightarrow w \in D: v \in \cL, w \in \cO\} \cup \{v \rightarrow w \in D: v,w \in \cL\}. 
\end{align}
The following lemma confirms that the matrix $\Omega$ lies in the covariance model $\cM(G_{\lat})$ given by the latent subgraph.

\begin{lemma}
Consider a graph $G=(\cO \sqcup \cL, D)$. Then, the covariance model given by the latent subgraph is 
\begin{align*}
    \cM(G_{\lat}) = \{\Sigma \in \PD(|\cO|): \ &\Sigma =  \Lambda_{\cL,\cO}^{\top}  (I -\Lambda_{\cL,\cL})^{-\top} \Phi_{\cL,\cL} (I -\Lambda_{\cL,\cL})^{-1} \Lambda_{\cL,\cO} + \Phi_{\cO,\cO} \\
    &\textrm{for some } (\Lambda, \Phi) \in \Theta_G\}.
\end{align*}
\end{lemma}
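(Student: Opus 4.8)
The plan is to unwind the definition $\cM(G_{\lat}) = \im(\tau_{G_{\lat}})$ and to show that on the latent subgraph the parametrization $\tau_{G_{\lat}}$ collapses to the map $(\Lambda,\Phi) \mapsto \Omega$, after which only a comparison of parameter domains remains. The first step is to observe that $G_{\lat}$ contains no edges with tail in $\cO$: by~\eqref{eq:latent-subgraph}, $D_{\lat}$ retains only the edges $\cL \to \cO$ and $\cL \to \cL$. Hence every $\Lambda \in \mathbb{R}^{D_{\lat}}$ satisfies $\Lambda_{\cO,\cO} = 0$ and $\Lambda_{\cO,\cL} = 0$. Substituting this into the defining formula for $\oLambda$ in Definition~\ref{def:model} gives $\oLambda = \Lambda_{\cO,\cO} + \Lambda_{\cO,\cL}(I - \Lambda_{\cL,\cL})^{-1}\Lambda_{\cL,\cO} = 0$, so that $I - \oLambda = I$ and $\tau_{G_{\lat}}(\Lambda,\Phi) = \Omega$. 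Consequently $\cM(G_{\lat})$ is precisely the set of matrices $\Omega$ obtained as $(\Lambda,\Phi)$ ranges over $\Theta_{G_{\lat}} = \mathbb{R}^{D_{\lat}}_{\reg} \times \mathbb{R}^{|\cO|+|\cL|}_{>0}$; each such $\Omega$ automatically lies in $\PD(|\cO|)$, since $\Phi_{\cO,\cO}$ is positive definite and the remaining summand is positive semidefinite, matching the $\PD(|\cO|)$ constraint in the asserted set.

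It then remains to verify that ranging over $\Theta_{G_{\lat}}$ produces the same family of matrices $\Omega$ as ranging over the larger domain $\Theta_G$ appearing in the statement. The crucial point is that $D$ and $D_{\lat}$ contain exactly the same edges with tail in $\cL$, so the blocks $\Lambda_{\cL,\cO}$ and $\Lambda_{\cL,\cL}$ have identical support whether $\Lambda \in \mathbb{R}^{D}$ or $\Lambda \in \mathbb{R}^{D_{\lat}}$. Since the formula for $\Omega$ depends on $\Lambda$ only through $\Lambda_{\cL,\cO}$ and $\Lambda_{\cL,\cL}$ (together with $\Phi$), the $\cO$-tail blocks of a full $\Lambda \in \mathbb{R}^D$ are irrelevant to the value of $\Omega$.

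What is left, and the only genuinely delicate point, is to reconcile the two regularity conditions defining $\mathbb{R}^{D}_{\reg}$ and $\mathbb{R}^{D_{\lat}}_{\reg}$. For the inclusion $\subseteq$ I would start from $(\Lambda,\Phi) \in \Theta_{G_{\lat}}$ and extend $\Lambda$ to an element of $\mathbb{R}^D$ by setting its $\cO$-tail blocks to zero; the extension has $\oLambda = 0$, so $I - \oLambda = I$ is invertible, while $I - \Lambda_{\cL,\cL}$ is invertible by hypothesis, placing the extension in $\mathbb{R}^D_{\reg}$ and yielding the same $\Omega$. For $\supseteq$ I would restrict a given $\Lambda \in \mathbb{R}^D_{\reg}$ to its $\cL$-tail blocks, obtaining an element of $\mathbb{R}^{D_{\lat}}$; its regularity is automatic because on $G_{\lat}$ the condition reduces to invertibility of $I - \Lambda_{\cL,\cL}$ alone (as $\oLambda = 0$), which holds since $\Lambda \in \mathbb{R}^D_{\reg}$. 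Both maps preserve $\Phi$ and hence the computed $\Omega$, so the two parametrized sets coincide and the lemma follows. The main obstacle is thus purely the bookkeeping around these regularity constraints; everything else is a direct substitution.
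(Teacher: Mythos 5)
Your proof is correct and takes essentially the same approach as the paper's: the paper compresses your two steps (the collapse $\oLambda = 0$ on $G_{\lat}$, so that $\tau_{G_{\lat}}$ reduces to the $\Omega$-formula, plus the extension/restriction correspondence between $\mathbb{R}^{D_{\lat}}_{\reg}$ and the $\cL$-tail blocks of $\mathbb{R}^{D}_{\reg}$) into a single observation about the block form of elements of $\mathbb{R}^{D_{\lat}}_{\reg}$. Your explicit bookkeeping of the two regularity conditions spells out exactly what the paper leaves implicit.
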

\begin{proof}
Observe that every $\Lambda' \in \mathbb{R}^{D_{\lat}}_{\reg}$ is of the form
\[
    \Lambda' = \begin{pmatrix}
    0 & 0 \\
    \Lambda_{\cL, \cO} & \Lambda_{\cL, \cL}
\end{pmatrix}
\]
for some $\Lambda \in \mathbb{R}^D_{\reg}$. By Definition~\ref{def:model}, this implies that $\Sigma \in \cM(G_{\lat})$ if and only if 
\[
    \Sigma = \Lambda_{\cL,\cO}^{\top}  (I -\Lambda_{\cL,\cL})^{-\top} \Phi_{\cL,\cL} (I -\Lambda_{\cL,\cL})^{-1} \Lambda_{\cL,\cO} + \Phi_{\cO,\cO} 
\]
for some $(\Lambda, \Phi) \in \Theta_G$.
\end{proof}

\begin{figure}[t]
\subfloat[]{
    \centering
    \tikzset{
      every node/.style={circle, inner sep=0.3mm, minimum size=0.45cm, draw, thick, black, fill=white, text=black},
      every path/.style={thick}
    }
    \begin{tikzpicture}[align=center]
      \node[fill=lightgray] (h1) at (-1,1) {$h_1$};
      \node[fill=lightgray] (h2) at (0.5,1) {$h_2$};
      
      \node[] (1) at (-2,0) {$v_1$};
      \node[] (2) at (-1,0) {$v_2$};
      \node[] (3) at (-0,0) {$v_3$};
      \node[] (4) at (1,0) {$v_4$};
      \node[] (5) at (2,0) {$v_5$};
      
      \draw[red, dashed] [-latex] (h1) edge (h2);
      \draw[red, dashed] [-latex] (h2) edge (2);
      \draw[red, dashed] [-latex] (h2) edge (3);
      \draw[red, dashed] [-latex] (h2) edge (4);
      \draw[red, dashed] [-latex] (h2) edge (5);
      
      \draw[blue] [-latex] (1) edge (h1);
      \draw[blue] [-latex] (3) edge (4);
    \end{tikzpicture}
}
\qquad \qquad \quad
\subfloat[]{
    \centering
    \tikzset{
      every node/.style={circle, inner sep=0.3mm, minimum size=0.45cm, draw, thick, black, fill=white, text=black},
      every path/.style={thick}
    }
    \begin{tikzpicture}[align=center]
      \node[fill=lightgray] (h1) at (-1.5,1) {$h_1$};
      \node[fill=lightgray] (h2) at (0.5,1) {$h_2$};
      \node[] (1) at (-3,0) {$v_1$};
      \node[] (2) at (-2,0) {$v_2$};
      \node[] (3) at (-1,0) {$v_3$};
      \node[] (4) at (-0,0) {$v_4$};
      \node[] (5) at (1,0) {$v_5$};
      \node[] (6) at (2,0) {$v_6$};
      
      \draw[red, dashed] [-latex] (h2) edge (h1);
      \draw[red, dashed] [-latex] (h1) edge (1);
      \draw[red, dashed] [-latex] (h1) edge (3);
      \draw[red, dashed] [-latex] (h1) edge (5);
      \draw[red, dashed] [-latex] (h2) edge (2);
      \draw[red, dashed] [-latex] (h2) edge (6);
      
      \draw[blue] [-latex] (4) edge (h1);
      \draw[blue, bend right] [-latex] (2) edge (5);
      \draw[blue] [-latex] (6) edge (5);
    \end{tikzpicture}
}
    
\caption{Two graphs that are rationally identifiable. The latent subgraphs consist only of the red dashed edges.}
\label{fig:running-example}  
\end{figure}
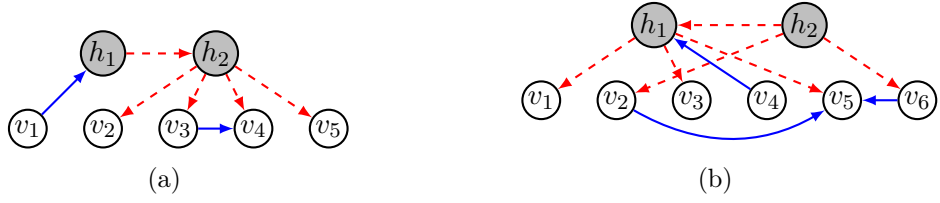

\begin{example}
In Figure~\ref{fig:running-example} we display two graphs together with their latent subgraphs. Throughout this paper, we always display graphs such that the red dashed edges correspond to the edges $D_{\lat}$ in the subgraph, while the blue edges are not in the latent subgraph, i.e., they form the complement $D \setminus D_{\lat}$.
\end{example}
 
\subsection{Identifiability}
A path from node $v$ to node $w$ in a directed graph $G$ is a sequence of edges that connects the consecutive nodes in a sequence of nodes beginning in $v$ and ending in $w$. A path from $v$ to $w$ is \emph{directed} if all its edges are pointing to $w$, that is, the path is of the form
\[
    v \rightarrow z_1 \rightarrow \cdots \rightarrow z_k \rightarrow w.
\]
For two observed nodes $v,w \in \cO$, we write $v \rightsquigarrow w \in G$ if there is a semi-direct effect from $v$ to $w$ in $G$, that is, either $v \rightarrow w \in D$ or there is a directed path from $v$ to $w$ in $G$ that only visits latent nodes. It is easy to see that $v \rightsquigarrow w \in G$ if and only if the coefficient $\olambda_{vw}$ can be nonzero. In this realm, for a node $v \in \cO$, we define $\opa(v)$  to be the set of semi-direct parents of $v$, that is, $\opa(v) = \{w \in \cO: w \rightsquigarrow v \in G\}$.

\begin{example}
In the graph in Figure~\ref{fig:running-example} (b) it holds that $v_2 \rightsquigarrow v_5, v_4 \rightsquigarrow v_5, v_6 \rightsquigarrow v_5  \in G$, and hence we have $\opa(v_5)=\{v_2,v_4,v_6\}$.
\end{example}

We are interested in the question of identifiability of all semi-direct effects $v \rightsquigarrow w \in G$, that is, whether the matrix $\oLambda$ can be uniquely recovered from a given covariance matrix $\Sigma \in \cM(G)$.  Our focus is on settings in which the identification formula is explicitly given by a rational formula, i.e., a fraction of two polynomial functions. The formula can then be readily used for downstream tasks in estimation and inference. However, since the identification formula is a rational function, there might be specific choices of parameters where the polynomial in the denominator vanishes and thus identification fails.  A subset $A \subseteq \Theta_G$ that is defined by a nonzero polynomial equation is called an \emph{algebraic set}, and it is a proper subset of the open set $\Theta_G$ \citep{cox2007ideals, shafarevich2013basic, okamoto1973distinctness}.  Hence, the set of parameter choices $A \subseteq \Theta_G$ where identifiability fails are Lebesgue measure zero sets.

\begin{definition} \label{def:rat-id}
\mbox{ }
\begin{itemize}
    \item[(a)] The graph $G$ is said to be rationally identifiable if there exist a proper algebraic subset $A \subset \Theta_G$ and a rational map $\psi: \PD(|\cO|) \rightarrow \mathbb{R}^{D_{\cO,\cO}}$ such that $\psi \circ \tau_G(\Lambda, \Phi)=\oLambda$ for all $(\Lambda, \Phi) \in \Theta_G \setminus A$.
    \item[(b)] The semi-direct effect $v \rightsquigarrow w \in G$, or also simply the coefficient $\olambda_{vw}$, is rationally identifiable if there exist a proper algebraic subset $A \subset \Theta_G$ and a rational map $\psi: \PD(|\cO|) \rightarrow \mathbb{R}$ such that $\psi \circ \tau_G(\Lambda, \Phi)=\olambda_{vw}$ for all $(\Lambda, \Phi) \in \Theta_G \setminus A$.
\end{itemize}
\end{definition}

Rational identifiability of all  semi-direct effects $v \rightsquigarrow w \in G$ is equivalent to rational identifiability of the graph $G$. We will verify in Examples~\ref{ex:id-running-example-1} and \ref{ex:id-running-example-2} that both graphs in Figure~\ref{fig:running-example} are rationally identifiable. 

\subsection{Treks} \label{sec:treks}

The covariance matrix in a linear structural equation model exhibits a nice combinatorial structure given by treks. They are obtained from ``gluing'' two directed paths together at their source node. All concepts and definitions in this section are from~\citet{sullivant2010trek}.

\begin{definition}
A \emph{trek} $\pi$ in $G=(\cO \sqcup \cL, D)$ from $v$ to $w$ is an ordered pair of directed
paths $\pi = (P_1,P_2)$ where $P_1$ has sink $v$, $P_2$ has sink $w$, and both $P_1$ and $P_2$ have the same source node $k \in \cO \sqcup \cL$.
We say that $P_1$ is the \emph{left part} of the trek, and $P_2$ is the \emph{right part} of the trek.
The common source node $k$ is called the \emph{top} of the trek, denoted $\text{top}(\pi)$. 
Note that both $P_1$ and $P_2$ may consist of a single vertex. 
\end{definition}

\begin{example} \label{ex:trek}
    In the graph in Figure~\ref{fig:running-example} (a), a trek from $v_3$ to $v_4$ is given by
    \[
        v_3 \leftarrow h_2 \leftarrow h_1 \rightarrow h_2 \rightarrow v_4.
    \]
    The top node is $h_1$, the left part is given by the directed path $h_1 \rightarrow h_2 \rightarrow v_3$, and the right part is given by $h_1 \rightarrow h_2 \rightarrow v_4$.
\end{example}

For a trek $\pi$ from $v$ to $w$ with top node $k$, we define a trek monomial as
\[
    \pi(\lambda, \phi) = \phi_k \prod_{x \rightarrow y \in \pi} \lambda_{xy}.
\]
Denote by $\mathcal{T}(v,w)$ the set of all treks from $v$ to $w$. The \emph{trek rule} \citep{spirtes2000causation, wright1934method} expresses the covariance matrix $\Sigma=(\sigma_{vw}) \in \cM(G)$ as a summation over treks, i.e.,
\begin{equation} \label{eq:trek-rule}
    \sigma_{vw} = \sum_{\pi \in \mathcal{T}(v,w)} \pi(\lambda, \phi).
\end{equation}

\begin{example}
    For the trek displayed in Example~\ref{ex:trek}, the trek monomial is given by $\pi(\lambda, \phi) = \phi_{v_1}\lambda_{v_1h_1}^2\lambda_{h_1h_2}^2\lambda_{h_2v_3}\lambda_{h_2v_4}$. By the trek rule, the corresponding entry of the covariance matrix is given by
    \begin{align*}
    \sigma_{v_3v_4} 
    = & \,\, \phi_{v_1}\lambda_{v_1h_1}^2\lambda_{h_1h_2}^2\lambda_{h_2v_3}^2\lambda_{v_3v_4} 
    + \phi_{v_1}\lambda_{v_1h_1}^2\lambda_{h_1h_2}^2\lambda_{h_2v_3}\lambda_{h_2v_4} 
    + \phi_{h_1}\lambda_{h_1h_2}^2\lambda_{h_2v_3}^2\lambda_{v_3v_4} \\
    &+ \phi_{h_1}\lambda_{h_1h_2}^2\lambda_{h_2v_3}\lambda_{h_2v_4} 
    + \phi_{h_2}\lambda_{h_2v_3}^2\lambda_{v_3v_4} 
    + \phi_{h_2}\lambda_{h_2v_3}\lambda_{h_2v_4}
    + \phi_{v_3}\lambda_{v_3v_4}.
\end{align*}
\end{example}
Now, for two sets of nodes $A,B \subseteq \cO$ of equal cardinality, there is a graphical characterization of when the determinant of the submatrix $\Sigma_{A,B}$ vanishes. It relies on trek systems with no sided intersection defined as follows.

\begin{definition}
    Consider a set of $n$ directed paths,  $\mathbf{P}=\{P_1, \ldots, P_n\}$, and let $a_i$ be the source and $b_i$ be the sink of $P_i$. If the sources are all distinct, and the sinks are all distinct, then we say that $\mathbf{P}$ is a \emph{system of directed paths} from $A=\{a_1, \ldots, a_n\}$ to $B=\{b_1, \ldots, b_n\}$. Note that there may be overlap between the sources in $A$ and the sinks in $B$, that is, we might have $A\cap B \neq \emptyset$. We say that $\mathbf{P}$ has \emph{intersection} if two paths share a node.

    Moreover, let $\Pi=\{\pi_1, \ldots, \pi_n\}$ be a set of $n$ treks, where trek $\pi_i$ starts in the node $a_i$ and ends in the node $b_i$. If the start nodes are all distinct, and the end nodes are all distinct, then we say that $\Pi$ is a \emph{system of treks} from $A=\{a_1, \ldots, a_n\}$ to $B=\{b_1, \ldots, b_n\}$. Note that $\Pi$ consists of two systems of directed paths, a \emph{left system} $\mathbf{P}_A$ from $\text{top}(\Pi)$ to $A$ and a \emph{right system} $\mathbf{P}_B$ from $\text{top}(\Pi)$ to $B$. We say that $\Pi$ has \emph{sided intersection} if  $\mathbf{P}_A$ has intersection or if $\mathbf{P}_B$ has intersection. 
\end{definition}

\begin{example}
    In the graph in Figure~\ref{fig:running-example} (b), consider the sets of nodes $A=\{v_1,v_2\}$ and $B=\{v_5,v_6\}$. The trek system 
    \[
        \{v_1 \leftarrow h_1 \rightarrow v_5, \,\,\, v_2 \leftarrow h_2 \rightarrow v_6 \}
    \]
    has no sided intersection since the paths in the left system $\{h_1 \rightarrow v_1, \,\, h_2 \rightarrow v_2\}$ don't share a node, and the paths in the right system $\{h_1 \rightarrow v_5, \,\, h_2 \rightarrow v_6\}$ don't share a node.
\end{example}

By \citet[Proposition 3.4]{sullivant2010trek}, it holds that $\det(\Sigma_{A,B})$ is generically nonzero (i.e., it is nonzero for almost all parameter choices) if and only if there is a system of treks from $A$ to $B$ with no sided intersection. Now, recall that Menger's theorem states that the maximal number of disjoint directed paths that can be found between a pair of nodes is equal to a minimum cut set. This can be translated to systems of treks with no sided intersection, which leads to the following concept.

\begin{definition} \label{def:trek-separation}
Let $A$, $B$, $C_A$, and $C_B$ be four subsets of $V=\cO \sqcup \cL$ which need not be disjoint. We say that the pair $(C_A,C_B)$ \emph{trek separates $A$ from $B$ in $G$} if for every trek $\pi=(P_1, P_2)$ in $G$ from a node in $A$ to a node in $B$, either $P_1$ contains a node in $C_A$ or $P_2$ contains a node in $C_B$. 
\end{definition}

With Definition~\ref{def:trek-separation} it holds that the determinant of $\Sigma_{A,B}$ is generically zero if and only if 
\[
    \min \{|C_A|+|C_B| : (C_A,C_B) \text{ trek separates } A \text{ from } B\} < |A| = |B|.
\]

\begin{example}
    In the graph  in Figure~\ref{fig:running-example} (b) it is impossible to trek separate the two sets $A=\{v_1,v_2\}$ and $B=\{v_5,v_6\}$ with a pair $(C_A,C_B)$ such that $|C_A|+|C_B| < 2$. We conclude that $\det(\Sigma_{A,B}) \neq 0$ for generic parameter choices. 
\end{example}
For more examples
of treks and more details on trek separation we refer to \citet{sullivant2010trek}.

\section{Latent-Subgraph Criterion} \label{sec:lsc}

In this section, we state our graphical criterion that is sufficient for rational identification of semi-direct effects. It recursively certifies identifiability of columns of $\oLambda$ and, importantly, the imposed conditions are expressed as combinatorial constraints on the given graph. Our main tool is the concept of trek separation in subgraphs that we derive in Section~\ref{sec:trek-separation}; it generalizes the standard trek separation discussed in Section~\ref{sec:treks}.
Before stating our main theorem, we define novel graphical concepts. For our criterion, we always consider one special subgraph, which is the \emph{latent subgraph} $G_{\lat}=(\cO \sqcup \cL, D_{\lat})$ with edges $D_{\lat}$ defined in Equation~\eqref{eq:latent-subgraph}.

\begin{definition}[Latent treks] \label{def:subgraph-treks}
A trek $\pi=(P_1, P_2)$  is a \emph{latent trek} if both $P_1$ and $P_2$ are directed paths in $G_{\lat}$. 
\end{definition}

The trek displayed in Example~\ref{ex:trek} is an example of a latent trek. We will also consider treks $\pi=(P_1, P_2)$ where only one of the parts $P_1$ or $P_2$ is required to be a directed path in $G_{\lat}$. We refer to such treks as treks in which the \emph{left/right part only takes edges in $G_{\lat}$}. In the next definition, we use the usual notion of \emph{descendants} of a node $v$, denoted $\des(v)$. It is given by the set of nodes $u \in \cO \sqcup \cL$ such that there is a direct path from $v$ to $u$ in $G$. Since we also allow trivial paths having no edges, we have $v \in \des(v)$.

\begin{definition}[Latent reachability]
Let $G=(\cO \sqcup \cL, D)$ be a directed graph. Let $v,w\in \cO$ be two (not necessarily distinct) observed nodes and let $H_1,H_2 \subseteq \cL$ be two sets of latent nodes. 
If there exists a latent trek $\pi=(P_1,P_2)$ from $v$ to $w$ such that $P_1$ does not contain a node in $H_1$ and $P_2$ does not contain a node in $H_2$, then we say that $w$ is \emph{latent reachable from $v$ while avoiding $(H_1,H_2)$}, and write $w \in \text{lr}_{H_1,H_2}(v)$. For a set $A \subseteq \cO$, we write $w\in\lr_{H_1,H_2}(A)$ if $w\in\lr_{H_1,H_2}(a)$ for some $a\in A$.

Moreover, we say that $w$ is \emph{extended latent reachable from $v$ while avoiding $(H_1,H_2)$} if there is a node $u \in \text{lr}_{H_1,H_2}(v)$ such that $w \in \text{des}(u)$, and write $w \in \elr_{H_1,H_2}(v)$. For a set $A \subseteq \cO$, we write $w\in\elr_{H_1,H_2}(A)$ if $w\in\elr_{H_1,H_2}(a)$ for some $a\in A$.
\end{definition}

\begin{example}
Consider the graph in Figure~\ref{fig:running-example} (b) and let $H_1=\{h_1\}$ and $H_2 = \emptyset$. The node $v_6$ is latent reachable from $v_2$ while avoiding $(H_1,H_2)$ since there is the latent trek $v_2 \leftarrow h_2 \rightarrow v_6$. It also holds that $v_5 \in \lr_{H_1,H_2}(v_2)$ since there is the latent trek $v_2 \leftarrow h_2 \rightarrow h_1 \rightarrow v_5$, and the left part $h_2 \rightarrow v_2$ does not contain the node $h_1$. On the other hand, if we consider $H_1=H_2=\{h_1\}$, then $v_5$ is not latent reachable from $v_2$ while avoiding $(H_1,H_2)$. However, it is extended latent reachable from $v_2$ while avoiding $(H_1,H_2)$ since $v_6 \in \lr_{H_1,H_2}(v_2)$ and $v_5 \in \des(v_6)$.
\end{example}

We are now ready to define our graphical criterion.

\begin{definition}[Latent-subgraph criterion] \label{def:htc}
Given a node $v \in \cO$, the $4$-tuple $(Y,Z,H_1,H_2)\in 2^{\cO \setminus \{v\}}\times 2^{\cO \setminus \{v\}}\times 2^{\cL} \times 2^{\cL}$ satisfies the \emph{latent-subgraph criterion} (LSC) with respect to $v$ if
\begin{itemize}
    \item[(i)] $|Y|=|\opa(v)|+|Z|$ and $|Z|=|H_1|+|H_2|$ with $Z \cap\opa(v) = \emptyset$, 
    \item[(ii)] $Y \cap (Z \cup \{v\}) = \emptyset$ and $(H_1,H_2)$ trek separates $Y$ and $Z \cup \{v\}$ in the latent subgraph~$G_{\lat}$, 
    \item[(iii)] there exists a system of treks with no sided intersection from $Y$ to $\opa(v) \cup Z$ in $G$ such that the left part of every trek only takes edges in $G_{\lat}$, and the right part of every trek ending in $Z$ only takes edges in $G_{\lat}$.
\end{itemize}
\end{definition}

If a tuple $(Y,Z,H_1,H_2)$ satisfies the LSC with respect to $v$, then Conditions (i) and (ii) ensure that the matrix $\Omega_{Y, Z \cup \{v\}}$ does not have full column rank. Moreover, Condition (iii) ensures, among others, that $\Omega_{Y, Z}$ generically has full column rank. This is true since there exists a system of latent treks from a subset $Y_Z \subseteq Y$ to $Z$  in the subgraph $G_{\lat}$, which implies that the determinant of $\Omega_{Y_Z, Z}$ is generically nonzero. Hence, since the ranks of $\Omega_{Y, Z \cup \{v\}}$ and $\Omega_{Y, Z}$ coincide, there must exist $\psi \in \mathbb{R}^{|Z|}$ such that $\Omega_{Y, Z} \cdot \psi = \Omega_{Y,v}$. Applying the factorization $\Sigma = (I - \oLambda)^{-\top} \Omega (I - \oLambda)^{-1}$ from~\eqref{eq:observed-covariance}, we obtain 
\[
    [(I - \oLambda)^{\top} \Sigma (I - \oLambda)]_{Y,v} - \Omega_{Y, Z} \cdot \psi = 0,
\]
which holds if and only if
\begin{equation} \label{eq:key-idea}
    \begin{pmatrix}
        [(I - \oLambda)^{\top} \Sigma]_{Y,\opa(v)} & \Omega_{Y,Z} 
    \end{pmatrix}
    \cdot
    \begin{pmatrix}
        \oLambda_{\opa(v),v} \\
        \psi
    \end{pmatrix}
    =  [(I - \oLambda)^{\top} \Sigma]_{Y,v}.
\end{equation}
Now, the entries in the matrix $[(I - \oLambda)^{\top} \Sigma]_{Y,\opa(v)}$ are given by sum over treks in which the left part of each trek only takes edges in $G_{\lat}$ and the entries in the matrix $\Omega_{Y, Z}$ are given by latent treks, in which both parts of each trek only take edges in $G_{\lat}$. Condition (iii) makes sure that the determinant of the block matrix $([(I - \oLambda)^{\top} \Sigma]_{Y,\opa(v)} \,\,\, \Omega_{Y,Z})$ is nonzero by using our criterion for trek separation in subgraphs given in Theorem~\ref{thm:generalized-trek-separation}. If we additionally make sure that suitable entries of $\oLambda$ appearing in the left-hand side of \eqref{eq:key-idea} are already known to be identifiable from earlier calculations, we can then solve for the semi-direct effects $\oLambda_{\opa(v),v}$. This is the key idea of our main result. The proof is given in Appendix~\ref{sec:proof-main-result}.

\begin{theorem}[LSC-identifiability] \label{thm:LSC}
Suppose that the $4$-tuple $(Y,Z,H_1,H_2)\in 2^{\cO \setminus \{v\}}\times 2^{\cO \setminus \{v\}}\times 2^{\cL} \times 2^{\cL}$ satisfies the LSC with respect to $v\in \cO$. If all semi-direct effects $u\rightsquigarrow w \in G$  for  $w\in Z\cup(Y\cap \elr_{H_2,H_1}(Z\cup\{v\}))$ are rationally identifiable, then all semi-direct effects $p \rightsquigarrow v \in G$ with $p \in \opa(v)$ are rationally identifiable.
\end{theorem}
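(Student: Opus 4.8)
The plan is to reduce the claim to the solvability of the square linear system~\eqref{eq:key-idea} for the unknown vector $(\oLambda_{\opa(v),v},\psi)$, and then to read off $\oLambda_{\opa(v),v}$ by Cramer's rule. Two facts make this reduction work. First, I must produce the auxiliary vector $\psi$ with $\Omega_{Y,Z}\,\psi=\Omega_{Y,v}$, which requires that the column $\Omega_{Y,v}$ lie in the column span of $\Omega_{Y,Z}$. Second, I need the coefficient matrix $M:=([(I-\oLambda)^{\top}\Sigma]_{Y,\opa(v)}\;\;\Omega_{Y,Z})$, which is square because $|Y|=|\opa(v)|+|Z|$ by Condition~(i), to be generically invertible. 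Given both, multiplying~\eqref{eq:observed-covariance} out exactly as in the displayed derivation preceding the theorem turns $\Omega_{Y,v}=\Omega_{Y,Z}\psi$ into~\eqref{eq:key-idea}, and $M^{-1}$ then expresses $\oLambda_{\opa(v),v}$ as a rational function of the entries of $M$ and of the right-hand side.

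For the first fact I would argue about ranks using the trek-separation characterization of vanishing subdeterminants of \citet[Proposition 3.4]{sullivant2010trek}, applied inside the latent subgraph $G_{\lat}$, where $\Omega\in\cM(G_{\lat})$ is the covariance matrix and all treks are latent treks. Condition~(ii) supplies a separator $(H_1,H_2)$ of $Y$ and $Z\cup\{v\}$ in $G_{\lat}$ with $|H_1|+|H_2|=|Z|<|Z|+1$ by Condition~(i); since any separator of $Y$ also separates every subset of $Y$, every $(|Z|+1)\times(|Z|+1)$ minor of $\Omega_{Y,Z\cup\{v\}}$ vanishes, so $\rk(\Omega_{Y,Z\cup\{v\}})\le|Z|$ generically. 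Condition~(iii), restricted to the treks ending in $Z$, yields a system of latent treks with no sided intersection from a subset $Y_Z\subseteq Y$ onto $Z$, whence $\det(\Omega_{Y_Z,Z})\ne0$ generically and $\rk(\Omega_{Y,Z})=|Z|$. As $Z\subseteq Z\cup\{v\}$, the two column spans agree, so $\Omega_{Y,v}$ lies in the span of $\Omega_{Y,Z}$, giving a unique $\psi$.

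The crux is the second fact, $\det M\ne0$, and this is where the ordinary trek-separation criterion is insufficient: the two blocks of $M$ are sums over treks obeying \emph{different} edge restrictions (left parts of the first block, and both parts of the second block, must stay in $G_{\lat}$, while the right parts of the first block are unrestricted). I would invoke the trek-separation-in-subgraphs result, Theorem~\ref{thm:generalized-trek-separation}, whose hypotheses are met precisely by the trek system furnished by Condition~(iii): a no-sided-intersection system from $Y$ to $\opa(v)\cup Z$ whose left parts lie in $G_{\lat}$ and whose right parts into $Z$ lie in $G_{\lat}$. This produces a monomial in the Leibniz expansion of $\det M$ that cannot be cancelled, certifying generic non-vanishing. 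I expect this step, together with checking that the combinatorial interpretation of the entries of $[(I-\oLambda)^{\top}\Sigma]$ and of $\Omega$ matches the block structure required by Theorem~\ref{thm:generalized-trek-separation}, to be the main obstacle.

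It remains to verify that, apart from the target effects $\oLambda_{\opa(v),v}$, the only semi-direct effects entering $M$ and the right-hand side of~\eqref{eq:key-idea} are those into nodes of $Z\cup(Y\cap\elr_{H_2,H_1}(Z\cup\{v\}))$, so that the hypothesis lets me substitute their rational formulas. The dependence on effects into a node $z\in Z$ enters through the right factor $(I-\oLambda)$ in $\Omega_{Y,z}=[(I-\oLambda)^{\top}\Sigma(I-\oLambda)]_{Y,z}$ and is covered directly. The dependence on effects into a node $y\in Y$ enters through the left factor $(I-\oLambda)^{\top}$; using the trek description of $[(I-\oLambda)^{\top}\Sigma]_{y,\cdot}$ and $\Omega_{y,\cdot}$ one checks that this dependence is genuine only when $y$ is a descendant of some node that is latent reachable from $Z\cup\{v\}$ while avoiding $(H_2,H_1)$, that is, exactly when $y\in\elr_{H_2,H_1}(Z\cup\{v\})$; for all other $y\in Y$ the relevant entries collapse to expressions in $\Sigma$ alone. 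Finally, collecting the exceptional loci---the generic-vanishing sets of $\det M$ and of $\det(\Omega_{Y_Z,Z})$, together with the proper algebraic sets attached to the assumed identifications---into a single proper algebraic set $A\subset\Theta_G$, and composing the Cramer's-rule formula with the already-available rational maps, yields a rational map realizing $\oLambda_{\opa(v),v}$ on $\Theta_G\setminus A$, which is exactly rational identifiability in the sense of Definition~\ref{def:rat-id}.
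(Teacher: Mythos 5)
Your overall skeleton---produce $\psi$ with $\Omega_{Y,Z}\psi=\Omega_{Y,v}$ from Conditions (i)--(iii), certify generic invertibility of the block coefficient matrix via Theorem~\ref{thm:generalized-trek-separation}, and solve by Cramer's rule---coincides with the paper's strategy, and those two facts are argued correctly (the paper proves the analogue of your first fact for the \emph{maximal} set $X$ that is trek-separated from $Z\cup\{v\}$ in $G_{\lat}$, not just for $Y$, which matters below). The genuine gap is in your final paragraph. Rational identifiability requires every coefficient of the linear system to be a rational function of $\Sigma$ and of the effects assumed identified, i.e.\ those into $Z\cup(Y\cap \elr_{H_2,H_1}(Z\cup\{v\}))$. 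For a row $y\in Y\setminus \elr_{H_2,H_1}(Z\cup\{v\})$ you claim that the entries $[(I-\oLambda)^{\top}\Sigma]_{y,\opa(v)}$, $\Omega_{y,Z}$ and $[(I-\oLambda)^{\top}\Sigma]_{y,v}$ ``collapse to expressions in $\Sigma$ alone.'' This is unsubstantiated and there is no reason for it to hold: $[(I-\oLambda)^{\top}\Sigma]_{y,p}=\Sigma_{y,p}-\sum_{u}\olambda_{uy}\Sigma_{u,p}$ genuinely involves the effects $\olambda_{uy}$ into $y$, which are exactly the ones \emph{not} covered by the hypothesis, and the condition $y\notin\elr_{H_2,H_1}(Z\cup\{v\})$ constrains treks between $y$ and $Z\cup\{v\}$ but says nothing about treks between $y$ and $\opa(v)$, so it cannot force any cancellation in these entries. (In the paper's small examples such entries do vanish, but only because the relevant parent sets happen to be empty.)

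What the paper does for these rows is not to re-express the same entries but to \emph{replace the equations}: for $y\notin\elr_{H_2,H_1}(Z\cup\{v\})$ it uses the row $\Sigma_{y,\opa(v)}\cdot\oLambda_{\opa(v),v}+[\Sigma(I-\oLambda)]_{y,Z}\cdot\psi=\Sigma_{y,v}$, whose coefficients involve only $\Sigma$ and effects into $Z$. Proving that the \emph{same} vector $(\oLambda_{\opa(v),v},\psi)$ satisfies these modified rows is the hard part of the paper's proof: one shows that every trek from $w\in Z\cup\{v\}$ to such a $y$ whose left part stays in $G_{\lat}$ must exit through the maximal trek-separated set $X\supseteq Y$, giving the factorization $[\Omega(I-\oLambda)^{-1}]_{w,y}=\Omega_{w,X}[(I-\oLambda)^{-1}]_{X,y}$, and one needs $\Omega_{X,Z}\psi=\Omega_{X,v}$ on all of $X$ (not merely on $Y$, where you define $\psi$) to conclude $[\Sigma(I-\oLambda)]_{y,Z}\cdot\psi=[\Sigma(I-\oLambda)]_{y,v}$. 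Invertibility must then be established for the resulting \emph{mixed-row} matrix, which is where Theorem~\ref{thm:generalized-trek-separation} is applied with $A=Y\cap\elr_{H_2,H_1}(Z\cup\{v\})$ and $B=Y\setminus A$; your unmodified matrix corresponds to $B=\emptyset$ and is also generically invertible, but its entries are not identified quantities. Without these steps, your Cramer formula expresses $\oLambda_{\opa(v),v}$ in terms of quantities that are not known to be recoverable from $\Sigma$, so rational identifiability is not established.
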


With Theorem~\ref{thm:LSC} we can recursively certify rational identifiability of a graph $G$ by checking rational identifiability of the matrix $\oLambda$ column by column. If rational identifiability can be certified recursively by Theorem~\ref{thm:LSC} for all nodes $v \in \cO$, then we say that the graph $G$ is \emph{LSC-identifiable}. We provide an algorithm to check LSC-identifiability using integer linear programs in Section~\ref{sec:computation}. The proof of Theorem~\ref{thm:LSC} is constructive in the sense that is explicitly provides the rational formulas for identification of the semi-direct effects. 

\begin{example} \label{ex:id-running-example-1}
The graph in Figure~\ref{fig:running-example} (a) is LSC-identifiable, which can be verified by recursively checking all observed nodes. For each $v \in \cO$, we find a tuple $(Y,Z,H_1,H_2)$ that satisfies the LSC with respect to $v$, ensuring that all nodes in $Z \cup (Y \cap \elr_{H_2,H_1}(Z \cup \{v\}))$ were successfully checked earlier.
\begin{itemize}
    \item[] \underline{$v = v_1$:} Since $\opa(v)=\emptyset$, the tuple $(Y,Z,H_1,H_2)=(\emptyset,\emptyset,\emptyset,\emptyset)$ trivially satisfies the LSC.
    \item[] \underline{$v = v_2$:} The tuple $(Y,Z,H_1,H_2)=(\{v_1\},\emptyset,\emptyset,\emptyset)$ satisfies the LSC. Condition~(ii) holds since there is no trek from $v_1$ to $v_2$ in the latent subgraph $G_{\lat}$. The trek system for Condition~(iii) is given by the trivial trek from $v_1$ to $v_1$ that contains no edge. 
    \item[] \underline{$v = v_3, v_5$:} With the same arguments as for $v_2$ it is easy to see that the tuple $(Y,Z,H_1,H_2)=(\{v_1\},\emptyset,\emptyset,\emptyset)$ satisfies the LSC.
    \item[] \underline{$v = v_4$:} Let $(Y,Z,H_1,H_2)=(\{v_1, v_2, v_3\},\{v_5\},\emptyset,\{h_2\})$. Condition~(i) is easily checked. Condition~(ii) is satisfied since the pair $(H_1,H_2) = (\emptyset, \{h_2\})$ trek separates $(\{v_1, v_2, v_3\}$ from $\{v_4, v_5\}$ in the latent subgraph $G_{\lat}$. For Condition~(iii), the system of treks is given by $\{v_1, \,\, v_3, \,\, v_2 \leftarrow h_2 \rightarrow v_5\}$, where the single nodes $v_1$ and $v_3$ correspond to trivial treks with no edges. The single node in $Z \cup (Y \cap \elr_{H_2,H_1}(Z \cup \{v\})) = \{v_5\}$ was already successfully checked in the last step.
\end{itemize}
\end{example}

\begin{figure}[t]
    \centering
    \tikzset{
      every node/.style={circle, inner sep=0.3mm, minimum size=0.45cm, draw, thick, black, fill=white, text=black},
      every path/.style={thick}
    }
    \begin{tikzpicture}[align=center]
      \node[fill=lightgray] (h1) at (1,1) {$h_1$};
      \node[fill=lightgray] (h2) at (4,1) {$h_3$};
      \node[fill=lightgray] (h3) at (2.5,-1) {$h_2$};
      \node[] (1) at (0,0) {$v_1$};
      \node[] (2) at (1,0) {$v_2$};
      \node[] (3) at (2,0) {$v_3$};
      \node[] (4) at (3,0) {$v_4$};
      \node[] (5) at (4,0) {$v_5$};
      \node[] (6) at (5,0) {$v_6$};

      \draw[blue] [-latex] (1) edge (h1);
      \draw[blue] [-latex] (3) edge (h3);
      \draw[red, dashed] [-latex] (h1) edge (h2);
      \draw[red, dashed] [-latex] (h1) edge (2);
      \draw[red, dashed] [-latex] (h1) edge (3);
      \draw[red, dashed] [-latex] (h2) edge (4);
      \draw[red, dashed] [-latex] (h2) edge (5);
      \draw[red, dashed] [-latex] (h2) edge (6);
      \draw[red, dashed] [-latex] (h3) edge (4);
    \end{tikzpicture}
    \caption{Same graph as Figure~\ref{fig:air-pollution} with relabeled vertices.}
    \label{fig:air-pollution-relabeled}
\end{figure}
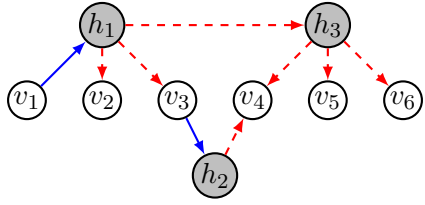

\begin{example} \label{ex:id-running-example-2}
The graphs in Figure~\ref{fig:air-pollution-relabeled} and Figure~\ref{fig:running-example} (b) are LSC-identifiable. This can be checked similarly as in Example~\ref{ex:id-running-example-1}, and for brevity we only state the tuples $(Y,Z,H_1,H_2)$ and the trek system $\Pi$ that satisfies Condition~(iii). We start with the graph in Figure~\ref{fig:air-pollution-relabeled}:
\begin{itemize}
    \item[] \underline{$v = v_1$:} $(Y,Z,H_1,H_2)=(\emptyset,\emptyset,\emptyset,\emptyset)$.
    \item[] \underline{$v = v_2,v_3,v_5,v_6$:} $(Y,Z,H_1,H_2)=(\{v_1\},\emptyset,\emptyset,\emptyset)$ and $\Pi = \{v_1\}$.
    \item[] \underline{$v = v_4$:} $(Y,Z,H_1,H_2)=(\{v_1, v_2, v_3\},\{v_5\},\emptyset,\{h_1\})$ and $\Pi = \{v_1, \,\, v_3, \,\, v_2 \leftarrow h_1 \rightarrow h_2 \rightarrow v_5\}$.
\end{itemize}
For the graph in Figure~\ref{fig:air-pollution-relabeled} (b), we have:
\begin{itemize}
    \item[] \underline{$v = v_2,v_4,v_6$:} $(Y,Z,H_1,H_2)=(\emptyset,\emptyset,\emptyset,\emptyset)$.
    \item[] \underline{$v = v_1,v_3$:} $(Y,Z,H_1,H_2)=(\{v_4\},\emptyset,\emptyset,\emptyset)$ and $\Pi = \{v_4\}$.
    \item[] \underline{$v = v_5$:} $(Y,Z,H_1,H_2)=(\{v_2, v_3, v_4, v_6\},\{v_1\},\emptyset,\{h_1\})$ and $\Pi = \{v_2, \,\, v_4, \,\, v_6, \,\, v_3 \leftarrow h_1 \rightarrow v_1\}$.
\end{itemize}
\end{example}

\begin{remark} \label{rem:comparison-htc}
    Note that the derivation of the linear equations in~\eqref{eq:key-idea} builds on fundamental insights from the half-trek criteria \citep{barber2022halftrek, foygel2012halftrek}. However, unlike their approach, we do not impose restrictions on the latent structure and, moreover, we focus on semi-direct effects that may propagate through latent variables. Consequently, our proof requires the development of entirely novel tools to certify that the matrix on the left-hand side of~\eqref{eq:key-idea} is invertible; see Section~\ref{sec:trek-separation}. It is easy to see that our criterion strictly subsumes the latent-factor half-trek criterion by \citet{barber2022halftrek} which assumes that the latent variables are restricted to be source nodes. Moreover, our algorithm to check LSC-identifiability also requires new ideas that connect trek separation in subgraphs to integer linear programming; see Section~\ref{sec:computation}. 
\end{remark}

As a special case of Theorem~\ref{thm:LSC} we obtain rational identifiability of a subclass of graphs. We say that a graph $G=(\cO \sqcup \cL, D)$ is \emph{confounding-free} if it does not contain two observed nodes $u,v \in \cO$ such that $u \rightsquigarrow v \in G$ and, in addition, there is a trek from $u$ to $v$ in the latent subgraph $G_{\lat}$.  \looseness=-1

\begin{corollary} \label{cor:bow-free}
Confounding-free acyclic graphs are rationally identifiable. 
\end{corollary}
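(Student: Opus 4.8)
The plan is to prove Corollary~\ref{cor:bow-free} by showing that every confounding-free acyclic graph is LSC-identifiable, so that rational identifiability follows directly from Theorem~\ref{thm:LSC}. I will process the observed nodes in a topological order consistent with the acyclicity of $G$, and for each node $v \in \cO$ exhibit a tuple $(Y, Z, H_1, H_2)$ satisfying the LSC with respect to $v$ whose recursion only refers to nodes already certified. The natural guess, motivated by the examples, is to take $Z = \emptyset$, $H_1 = H_2 = \emptyset$, and $Y = \opa(v)$. With this choice Condition~(i) holds since $|Y| = |\opa(v)| = |\opa(v)| + 0$ and $|Z| = 0 = |H_1| + |H_2|$, and $Z \cap \opa(v) = \emptyset$ trivially. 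Condition~(ii) requires $Y \cap \{v\} = \emptyset$, which holds because $v \notin \opa(v)$, together with $(\emptyset, \emptyset)$ trek separating $\opa(v)$ from $\{v\}$ in $G_{\lat}$; and Condition~(iii) asks for a system of treks with no sided intersection from $\opa(v)$ to $\opa(v)$ in $G$ whose parts take edges in $G_{\lat}$.

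\medskip

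\noindent First I would verify Condition~(ii). Since $H_1 = H_2 = \emptyset$, the pair $(\emptyset, \emptyset)$ trek separates $\opa(v)$ from $\{v\}$ in $G_{\lat}$ precisely when there is \emph{no} trek from any $p \in \opa(v)$ to $v$ in the latent subgraph, i.e.\ no latent trek from $p$ to $v$. This is exactly where the confounding-free hypothesis enters: if such a latent trek existed, then since $p \in \opa(v)$ means $p \rightsquigarrow v \in G$, we would have both a semi-direct edge $p \rightsquigarrow v$ and a trek from $p$ to $v$ in $G_{\lat}$, contradicting the definition of confounding-free. Hence no such trek exists and Condition~(ii) holds. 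Next I would verify Condition~(iii) by taking the trek system consisting of the trivial (edge-free) trek at each node $p \in \opa(v)$, which maps $\opa(v)$ to $\opa(v)=\opa(v)\cup Z$ identically. These treks contain no edges, so vacuously their parts take only edges in $G_{\lat}$; and distinct nodes give distinct trivial treks with no shared vertices, so there is no sided intersection.

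\medskip

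\noindent Finally I would confirm that the recursion is well-founded. Theorem~\ref{thm:LSC} requires that all semi-direct effects into nodes in $Z \cup (Y \cap \elr_{H_2, H_1}(Z \cup \{v\}))$ be already identified. With $Z = \emptyset$ and $H_1 = H_2 = \emptyset$, this set reduces to $Y \cap \elr_{\emptyset, \emptyset}(\{v\}) = \opa(v) \cap \elr_{\emptyset, \emptyset}(\{v\})$. The key point is that every node $w$ in this set is a strict ancestor of $v$ in $G$: being in $\opa(v)$ means $w \rightsquigarrow v$, so $w$ precedes $v$ in any topological order and was certified at an earlier step of the recursion. (Acyclicity guarantees that $w \rightsquigarrow v$ forbids $w = v$ and forbids $v \rightsquigarrow w$.) Thus processing $\cO$ in topological order, each application of Theorem~\ref{thm:LSC} identifies $\oLambda_{\opa(v), v}$ from previously identified effects, and by induction all columns of $\oLambda$ are rationally identified; hence $G$ is rationally identifiable.

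\medskip

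\noindent The main obstacle I anticipate is the bookkeeping in the final recursion step: one must argue carefully that taking $Y = \opa(v)$ makes the residual set $\opa(v) \cap \elr_{\emptyset,\emptyset}(\{v\})$ consist only of topological predecessors of $v$, and that the trivial trek system genuinely satisfies the subgraph-restriction in Condition~(iii) even when some parents $p$ are reached only through edges outside $G_{\lat}$. Both are handled by the observation that edge-free treks trivially meet the subgraph constraint, so the only substantive graphical input is the confounding-free property feeding Condition~(ii); acyclicity supplies the well-founded topological ordering for the induction.
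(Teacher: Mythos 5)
Your proposal is correct and follows essentially the same argument as the paper: for each $v \in \cO$ take the tuple $(\opa(v), \emptyset, \emptyset, \emptyset)$, verify the LSC (with the confounding-free hypothesis giving Condition~(ii) and trivial treks giving Condition~(iii)), and recurse along a topological order supplied by acyclicity. Your write-up merely spells out the verification that the paper dismisses as ``easy to see.''
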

\begin{proof}
Let $G=(\cO \sqcup \cL, D)$ be a confounding-free acyclic graph. It is easy to see that for every node $v \in \cO$ the tuple $(Y,Z,H_1,H_2)=(\opa(v), \emptyset, \emptyset, \emptyset)$ satisfies the LSC with respect to $v$. Since $G$ is acyclic, there is at least one total order $\prec$ on the observed nodes $\cO$ such that $u \rightsquigarrow v \in G$  only if $u \prec v$. Crucially, all elements $u \in \opa(v)$ are predecessors of $v$ with respect to the order $\prec$. By Theorem~\ref{thm:LSC}, we can thus certify rational identifiability of all semi-direct effects recursively according to the order $\prec $ by using the tuples $(\opa(v), \emptyset, \emptyset, \emptyset)$. 
\end{proof}

\section{Efficient Algorithm via Linear Programming} \label{sec:computation}

In this section, we propose a sound and complete algorithm for deciding whether
a graph is LSC-identifiable. 

\subsection{Path Systems in Directed Graphs} \label{sec:integer-linear-program}
We first solve the subproblem of deciding the existence of certain path systems in directed graphs that involve subgraphs. Suppose that we are given a directed graph $G=(V,D)$ and a subgraph $G_1=(V,D_1)$ with edges $D_1 \subseteq D$. Let $P,Z,Y_a \subseteq V$ be three sets of nodes such that $P$ and $Z$ are disjoint. Our goal is to decide whether there exists a subset $Y \subseteq Y_a$ such that there is a system of directed paths with no intersection from $Y$ to $Z \cup P$ where every path ending in $Z$ only has edges in $G_1$. The set $Y_a$ is a given set of nodes that are ``allowed'' as source nodes of the paths. We solve this decision problem by solving one integer linear program, which extends the usual maximum flow problem \citep{cormen2009introduction} by incorporating the additional constraint that flows ending in $A$ are restricted to only take edges the subgraph. Consider the \emph{flow graph} $G_{\flow}(Z,P,Y_a)=(V_f,D_f)$ with $V_f = V \cup \{s,t\}$ and 
\begin{align*}
    D_f=D \cup \{s \to y: y \in Y_a\} \cup \{z \to t: z \in Z\} \cup \{p \to t: p \in P\}.
\end{align*}
Moreover, let $f \in \mathbb{R}^{D_f}$ and $f^1 \in \mathbb{R}^{D_f}$ be two weight vectors for the edges in $G_{\flow}$. We define the linear program  $\text{Lp}(G,G_1,Z,P,Y_a)$ as follows:
\begin{align*}
    &\text{Maximize} \,\,\, \sum_{z \in Z} f^1_{zt} + \sum_{p \in P} f_{pt}, \\
    &\text{subject to}  \\
    &\hspace{1.6cm}\text{(i) } \,\,\, f^1_{uv} \geq 0 \text{ and } f_{uv} \geq 0 \text{ for all } u \to v \in D_f , \\[0.25cm]
    &\hspace{1.6cm}\text{(ii) } \, \sum_{u \in V_f} f^1_{uv} = \sum_{w \in V_f} f^1_{vw} \leq 1 \text{ and } \sum_{u \in V_f} f_{uv} = \sum_{w \in V_f} f_{vw} \leq 1 \text{ for all } v \in V, \\[0.25cm]
    &\hspace{1.6cm}\text{(iii) }  f^1_{uv} = 0 \text{ for all } u \to v \in D \setminus D_1 \text{ and for all } p \to t \in D_f \text{ with } p \in P, \\[0.25cm]
    &\hspace{1.6cm}\text{(iv) }  \sum_{u \in V_f} f^1_{uv} + \sum_{u \in V_f} f_{uv} \leq 1 \text{ for all } v \in V.
\end{align*}
The linear program $\text{Lp}(G,G_1,Z,P,Y_a)$ reduces to the usual maximum flow problem with node capacities equal to $1$ if the subgraph $G_1$ does not contain any edges. However, if $G_1$ contains edges, $\text{Lp}(G,G_1,Z,P,Y_a)$ is significantly different from the standard maximum flow problem. In particular, it is not equivalent to solving two maximum flow computations separately, where one maximum flow is computed on the entire graph $G$ and the other is computed on the subgraph $G_1$. This is due to Condition~(iv) that combines both flow problems.  

Note that the optimal value of $\text{Lp}(G,G_1,Z,P,Y_a)$ is bounded above by $|Z| + |P|$. We obtain the following theorem that is proved in Appendix~\ref{sec:other-proofs}.

\begin{figure}[t]
\subfloat[]{
    \centering
    \tikzset{
      every node/.style={circle, inner sep=0.3mm, minimum size=0.45cm, draw, thick, black, fill=white, text=black},
      every path/.style={thick}
    }
    \begin{tikzpicture}[align=center]
      \node[] (s1) at (-1.5,0) {$y_1$};
      \node[] (s2) at (-1.5,-1) {$y_2$};
      \node[] (a) at (1.5,0) {$z$};
      \node[] (b) at (1.5,-1) {$p$};
      \node[] (5) at (0,0) {$v$};
      \node[] (6) at (0,1) {$w$};
      
      \draw[red, dashed] [-latex] (s1) edge (5);
      \draw[red, dashed] [-latex] (s1) edge (6);
      \draw[red, dashed] [-latex] (s2) edge (5);
      \draw[red, dashed] [-latex] (5) edge (a);
      \draw[red, dashed] [-latex] (5) edge (b);

      \draw[blue] [-latex] (s2) edge (b);
      \draw[blue] [-latex] (6) edge (a);
    \end{tikzpicture}
}
\qquad \qquad \quad
\subfloat[]{
    \centering
    \tikzset{
      every node/.style={circle, inner sep=0.3mm, minimum size=0.45cm, draw, thick, black, fill=white, text=black},
      every path/.style={thick}
    }
    \begin{tikzpicture}[align=center]
        \node[] (s) at (-3,-0.5) {$s$};
      \node[] (s1) at (-1.5,0) {$y_1$};
      \node[] (s2) at (-1.5,-1) {$y_2$};
      \node[] (a) at (1.5,0) {$z$};
      \node[] (b) at (1.5,-1) {$p$};
      \node[] (5) at (0,0) {$v$};
      \node[] (6) at (0,1) {$w$};
      \node[] (t) at (3,-0.5) {$t$};
      
      \draw[red, dashed] [-latex] (s1) edge (5);
      \draw[red, dashed] [-latex] (s1) edge (6);
      \draw[red, dashed] [-latex] (s2) edge (5);
      \draw[red, dashed] [-latex] (5) edge (a);
      \draw[red, dashed] [-latex] (5) edge (b);
      \draw[red, dashed] [-latex] (a) edge (t);

      \draw[blue] [-latex] (s2) edge (b);
      \draw[blue] [-latex] (6) edge (a);
      \draw[blue] [-latex] (b) edge (t);
      \draw[blue] [-latex] (s) edge (s1);
      \draw[blue] [-latex] (s) edge (s2);
    \end{tikzpicture}
}
    \caption{(a) Directed graph where red dashed edges are elements of $D_1$, while blue edges are elements of $D \setminus D_1$. (b) Corresponding flow graph $G_{\flow}(Z,P,Y_a)$ for $Z=\{z\}$, $P=\{p\}$ and $Y_a=\{y_1,y_2\}$.}
    \label{fig:neg-example-max-flow}
\end{figure}
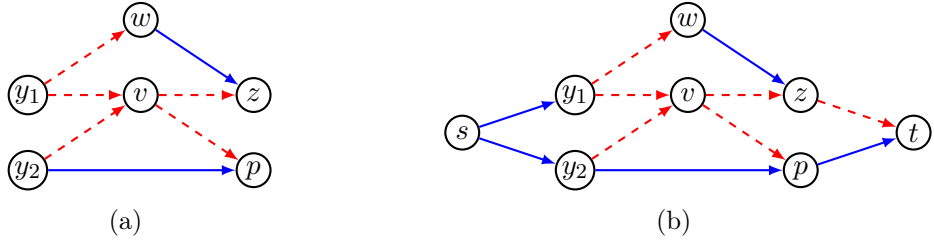

\begin{theorem} \label{thm:linear-program}
Let $G=(V,D)$ be a directed graph. 
There is an integer solution of  $\text{Lp}(G,G_1,Z,P,Y_a)$ with optimal value $|Z| + |P|$ if and only if  there exists $Y \subseteq Y_a$ such that there is a system of directed paths with no intersection from $Y$ to $Z \cup P$ where every path ending in $Z$ only has edges in $G_1$.
\end{theorem}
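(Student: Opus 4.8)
The plan is to prove Theorem~\ref{thm:linear-program} as an equivalence between two formulations of the same combinatorial problem, using the total unimodularity of the flow constraints to bridge the integer and linear relaxations. I structure the argument in the two directions of the ``if and only if.''

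\textbf{From path systems to an integer solution.} Suppose there exists $Y \subseteq Y_a$ and a system of directed paths $\mathbf{P}$ with no intersection from $Y$ to $Z \cup P$ such that every path ending in $Z$ uses only edges in $G_1$. I would construct an integer feasible point of $\mathrm{Lp}(G,G_1,Z,P,Y_a)$ by routing each path as a unit of flow in the flow graph $G_{\flow}(Z,P,Y_a)$. For a path from some $y \in Y$ ending in a node $z \in Z$, I set $f^1_{uv} = 1$ along the edges of the path, together with $f^1_{sy} = 1$ and $f^1_{zt} = 1$; since such a path only uses edges in $D_1$, Condition~(iii) is respected. For a path ending in $P$, I use the vector $f$ instead, setting $f_{uv} = 1$ along its edges, $f_{sy} = 1$, and $f_{pt} = 1$. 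The no-intersection property of $\mathbf{P}$ guarantees that each node carries at most one unit of flow across both vectors combined, so the combined node-capacity Condition~(iv) holds, and the conservation and capacity bounds in Condition~(ii) follow from the fact that each path enters and leaves every internal node exactly once. The objective $\sum_{z \in Z} f^1_{zt} + \sum_{p \in P} f_{pt}$ then counts one for each of the $|Z| + |P|$ paths, achieving the value $|Z| + |P|$.

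\textbf{From an integer solution to path systems.} For the converse, I assume an integer optimal solution $(f, f^1)$ with value $|Z| + |P|$. Since the objective is bounded above by $|Z| + |P|$ and every $f^1_{zt}, f_{pt}$ is a nonnegative integer bounded by $1$ (by Condition~(ii)), attaining the maximum forces $f^1_{zt} = 1$ for all $z \in Z$ and $f_{pt} = 1$ for all $p \in P$. I then apply the standard flow-decomposition argument separately to the integral flows $f^1$ and $f$: each decomposes into unit $s$--$t$ paths (there are no cycles carrying flow, or if there are, I delete them without changing the objective). Each $f^1$-path is an $s$--$t$ path entering $t$ through some edge $z \to t$ with $z \in Z$, by Condition~(iii) which forbids $f^1$ from using $p \to t$; moreover Condition~(iii) restricts $f^1$ to edges in $D_1$, so stripping off the endpoints $s, t$ yields a path from a node $y \in Y_a$ to $z \in Z$ using only edges of $G_1$. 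Each $f$-path similarly yields a path from some $y \in Y_a$ to a node in $Z \cup P$. The node-capacity Conditions~(ii) and~(iv) ensure no two resulting paths share an internal node, and distinctness of source and sink assignments follows from the capacity-$1$ constraints on $s \to y$ and the terminal edges. Collecting all these stripped paths and letting $Y$ be their (distinct) source nodes produces exactly the desired path system with no intersection.

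\textbf{Main obstacle.} The delicate point is handling the two flow vectors $f$ and $f^1$ \emph{simultaneously} under the coupling Condition~(iv), rather than decomposing them in isolation. A priori an $f^1$-path and an $f$-path could be routed through a common node; Condition~(iv) is precisely what forbids this, but I must verify that after flow decomposition the no-intersection property survives across the two families, and that paths ending in $Z$ versus $P$ are correctly separated by Condition~(iii). A secondary subtlety is guaranteeing integrality of the optimum: rather than invoking total unimodularity abstractly, I find it cleaner to argue directly via flow decomposition on the given integer solution, which the theorem statement already assumes to exist, so the heavy lifting reduces to a careful bookkeeping of which terminal edge each decomposed unit path uses and confirming the edge-support restriction from Condition~(iii) is inherited by each individual path.
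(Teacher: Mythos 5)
Your proposal is correct and takes essentially the same approach as the paper's own proof in both directions: routing each path as a unit of flow ($f^1$ for paths ending in $Z$, $f$ for paths ending in $P$) for sufficiency, and decomposing the integral flows into $s$--$t$ paths, with flow-carrying cycles deleted, for necessity, using Conditions (ii) and (iv) to get node-disjointness within and across the two families. The one step the paper makes explicit that you should add is in the forward direction: since directed paths in a system need not be simple, one first removes cycles from the given path system (just as you do for the flows in the converse), because assigning unit flow along a path that revisits a node would violate the node-capacity constraints in Conditions (ii) and (iv).
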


\begin{example}
    Consider the graph Figure~\ref{fig:neg-example-max-flow} (a) with subgraph $G_1$ given by the red dashed edges. Let $P=\{p\}$, $Z = \{z\}$ and $Y_a=\{y_1,y_2\}$.  Clearly, there exists a system of directed paths with no intersection from $Y=Y_a$ to $Z \cup P$ such that every path ending in $Z$ only has edges on $G_1$. The path system is given by $\{y_1 \rightarrow v \rightarrow z, \,\, y_2 \rightarrow p\}$. On the other hand, we obtain the flow graph $G_{\flow}(Z,P,Y_a)$ shown in Figure~\ref{fig:neg-example-max-flow} (b). Set $f^1_{s y_1} = f^1_{y_1 v}= f^1_{v z} = f^1_{z t} = 1$ and $f_{s y_2} = f_{y_2 p} = f_{pt} =1$, and set all other entries of $f$ and $f^1$ to zero. Then the tuple $(f,f^1)$ is integer-valued and maximizes $\text{Lp}(G,G_1,Z,P,Y_a)$ since $f^1_{zt} +f_{pt} = 2 = |Z|+|P|$.
\end{example}

In all our computations, whenever the optimal value of $\text{Lp}(G,G_1,Z,P,Y_a)$ was $|Z| + |P|$, it also existed an integer solution. If this is generally true, it would be sufficient to solve only the linear program, which is possible in polynomial time, instead of solving the integer linear program, which is NP-complete. 

\begin{conjecture} \label{conj:integer-solution}
If the optimal value of $\text{Lp}(G,G_1,Z,P,Y_a)$ is $|Z| + |P|$, then there is an integer solution.
\end{conjecture}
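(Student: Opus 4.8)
The plan is to prove that the face of the feasible polytope of $\Lp(G,G_1,Z,P,Y_a)$ attaining value $|Z|+|P|$ is integral. The first step is to record what the hypothesis buys us: since $f^1_{zt}\le 1$ and $f_{pt}\le 1$ by Condition~(ii), an optimal value of $|Z|+|P|$ forces $f^1_{zt}=1$ for every $z\in Z$ and $f_{pt}=1$ for every $p\in P$. Thus at the optimum every sink is fully saturated, and the task reduces to certifying that a fully saturating, vertex-capacitated, two-coloured fractional flow can also be realized integrally. I would then try to reduce this to an ordinary single-commodity maximum flow, whose integrality is guaranteed by the max-flow min-cut theorem with integer capacities.

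Concretely, I would build an auxiliary network $H$ in which every vertex $v\in V$ is split to carry the unit combined capacity of Condition~(iv), while a $D_1$-layer and a full-$D$-layer encode the two admissible edge sets: the type-$1$ flow (the variables $f^1$, confined to $D_1$ by Condition~(iii)) lives in the first layer and may reach $t$ only through $Z$, whereas the unrestricted flow (the variables $f$) lives in the second layer and reaches $t$ through $P$. If the splitting gadget can be arranged so that a single-commodity maximum $s$-$t$ flow in $H$ corresponds exactly to feasible pairs $(f,f^1)$ of $\Lp(G,G_1,Z,P,Y_a)$, then the constraint matrix of $H$ is a network matrix, hence totally unimodular, and an integral optimum of value $|Z|+|P|$ exists, proving the conjecture.

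The hard part is precisely the coupling Condition~(iv), which is exactly what separates $\Lp(G,G_1,Z,P,Y_a)$ from a standard flow problem and makes it behave like a two-commodity flow with a shared vertex resource. Any gadget that enforces the combined capacity by routing both colours through a common unit-capacity bottleneck at $v$ also permits the flow to enter as type-$1$ and leave as unrestricted; such \emph{colour-switching} is fatal, because a path reaching some $z\in Z$ after switching from the $D_1$-layer into the full layer secretly uses edges of $D\setminus D_1$, which Condition~(iii) forbids. The one piece of structure I would exploit is that this defect is asymmetric: a switch from the unrestricted layer into the $D_1$-layer is harmless, since the full layer already contains every $D_1$-edge, and only the reverse switch along a $Z$-bound path must be prevented. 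On acyclic inputs one can hope to linearize the gadget so that the forbidden switch is simply unavailable, but in the presence of directed cycles ruling it out seems to require reasoning about global path structure rather than a local gadget, which is why the statement resists an elementary flow reduction.

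A natural fallback, if the exact single-commodity reduction is unattainable, is to first establish half-integrality of $\Lp(G,G_1,Z,P,Y_a)$, for instance by exhibiting its constraint system as box-totally-dual-integral or through a bidirected-flow argument, and then to upgrade half-integral optima to integral ones on the saturating face. Here the full-saturation consequence derived in the first step should do the work: starting from a half-integral optimum, one repeatedly cancels the fractional support along alternating closed walks, and saturation of every sink rules out the configurations in which such a cancellation would strand half a unit of demand. Making this augmentation respect the two edge-colour classes simultaneously, without introducing an illegal $D_1$-to-$D$ switch on a $Z$-bound walk, is what I expect to be the main obstacle to completing the proof.
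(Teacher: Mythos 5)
This statement is not proved in the paper at all: it is posed as an open conjecture, supported only by the authors' computational evidence, and the discussion section explicitly lists its resolution as future work. So the only question is whether your attempt settles it, and it does not. What you have written is a plan in which both proposed routes are abandoned exactly at the point where the real difficulty sits, by your own admission ("which is why the statement resists an elementary flow reduction"; "what I expect to be the main obstacle to completing the proof"). A proof attempt that defers its central step is not a proof, so the conjecture remains open after your argument just as before it.

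Beyond the missing steps, two of the ideas you lean on are themselves problematic. First, the total-unimodularity route is aiming at the wrong target: Condition (iv) couples $f$ and $f^1$ through a shared vertex capacity, which makes $\Lp(G,G_1,Z,P,Y_a)$ behave like a two-commodity flow problem, and such polytopes generically have fractional vertices (half-integrality is the best one can expect globally, as for classical two-commodity flows). The conjecture does not claim the feasible polytope or even the optimal face is integral --- only that the face of value $|Z|+|P|$ \emph{contains} an integer point --- so a network-matrix/TU argument is simultaneously unavailable and stronger than needed; any successful proof will likely have to exploit the special structure of the saturating face combinatorially (equivalently, via the paper's Theorem 5.1, show that a fractional optimum of value $|Z|+|P|$ forces the existence of the disjoint path system). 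Second, the asymmetry you propose to exploit is stated backwards. In your two-layer network, $Z$ connects to $t$ only from the $D_1$-layer, so a $Z$-bound walk necessarily ends in that layer; the illegal configuration is therefore a walk that travels in the full layer, possibly on edges of $D\setminus D_1$, and then switches \emph{into} the $D_1$-layer before reaching some $z\in Z$. The switch you declare harmless (from the unrestricted layer into the $D_1$-layer) is precisely this fatal one, while the harmless direction is switching out of the $D_1$-layer and terminating in $P$, since $D_1\subseteq D$. As stated, your gadget heuristic would admit solutions violating Condition (iii). The fallback via half-integrality inherits the same issue: no box-TDI certificate is given, and the colour-respecting cancellation along alternating closed walks that you would need is exactly the interaction between the two flow classes that makes the conjecture hard.
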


However, note that despite NP-completeness, there are very efficient methods for solving integer linear programs, such as cutting-plane methods or branch-and-bound methods; see e.g.~\citet{schrijver1986theory} or \citet{nemhauser1988integer}.

\subsection{Condition (iii) of the Latent-Subgraph Criterion}
We now consider the problem of checking the existence of the trek system in Condition~(iii) of the latent-subgraph criterion. We reduce the problem to solving the integer linear program of the previous section. Suppose we are given a set $Y_a \subseteq V$ of ``allowed nodes'' and two sets $Z, P \subseteq V$ that are disjoint. Our goal is to decide whether there exists a subset $Y \subseteq Y_a$ such that there is a system of treks with no sided intersection from $Y$ to $Z \cup P$ in $G$ where the left part of every trek only takes edges in $G_{\lat}$, and the right part of every trek ending in $Z$ only takes edges in $G_{\lat}$.

We construct a suitable graph $G^{\lp}=(V^{\lp},D^{\lp})$ and a subgraph $G^{\lp}_{\lat}=(V^{\lp},D^{\lp}_{\lat})$ that are the input to the linear program. Let $V'$ be a copy of the set of nodes $V$. Then the nodes are given by $V^{\lp} = V \cup V'$ and the set of edges are given by

\noindent\begin{minipage}{.5\linewidth}
\begin{align*}
  D^{\lp} &= \{v \rightarrow h : h \rightarrow v \in D_{\lat} \} \\
  &\,\cup \{v \rightarrow v': v \in V\} \\
  &\,\cup \{u' \rightarrow v': u \rightarrow v \in D\},
\end{align*}
\end{minipage}
\begin{minipage}{.5\linewidth}
\begin{align*}
   D^{\lp}_{\lat} &= \{v \rightarrow h : h \rightarrow v \in D_{\lat} \} \\
  &\,\cup \{v \rightarrow v': v \in V\} \\
  &\,\cup \{u' \rightarrow v': u \rightarrow v \in D_{\lat}\} .
\end{align*}
\end{minipage}
\vspace{0.3cm}

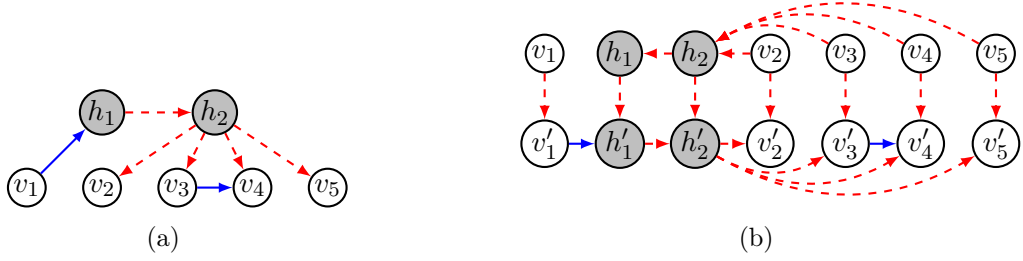
\begin{figure}[t]
\subfloat[]{
    \centering
    \tikzset{
      every node/.style={circle, inner sep=0.3mm, minimum size=0.45cm, draw, thick, black, fill=white, text=black},
      every path/.style={thick}
    }
    \begin{tikzpicture}[align=center]
      \node[fill=lightgray] (h1) at (-1,1) {$h_1$};
      \node[fill=lightgray] (h2) at (0.5,1) {$h_2$};
      
      \node[] (1) at (-2,0) {$v_1$};
      \node[] (2) at (-1,0) {$v_2$};
      \node[] (3) at (-0,0) {$v_3$};
      \node[] (4) at (1,0) {$v_4$};
      \node[] (5) at (2,0) {$v_5$};
      
      \draw[red, dashed] [-latex] (h1) edge (h2);
      \draw[red, dashed] [-latex] (h2) edge (2);
      \draw[red, dashed] [-latex] (h2) edge (3);
      \draw[red, dashed] [-latex] (h2) edge (4);
      \draw[red, dashed] [-latex] (h2) edge (5);
      
      \draw[blue] [-latex] (1) edge (h1);
      \draw[blue] [-latex] (3) edge (4);
    \end{tikzpicture}
}
\qquad \qquad \quad
\subfloat[]{
    \centering
    \tikzset{
      every node/.style={circle, inner sep=0.3mm, minimum size=0.45cm, draw, thick, black, fill=white, text=black},
      every path/.style={thick}
    }
    \begin{tikzpicture}[align=center]
      \node[fill=lightgray] (h1) at (-3,0) {$h_1$};
      \node[fill=lightgray] (h2) at (-2,0) {$h_2$};     
      \node[] (1) at (-4,0) {$v_1$};
      \node[] (2) at (-1,0) {$v_2$};
      \node[] (3) at (-0,0) {$v_3$};
      \node[] (4) at (1,0) {$v_4$};
      \node[] (5) at (2,0) {$v_5$};
      
      \node[fill=lightgray] (h1') at (-3,-1.2) {$h_1'$};
      \node[fill=lightgray] (h2') at (-2,-1.2) {$h_2'$};     
      \node[] (1') at (-4,-1.2) {$v_1'$};
      \node[] (2') at (-1,-1.2) {$v_2'$};
      \node[] (3') at (-0,-1.2) {$v_3'$};
      \node[] (4') at (1,-1.2) {$v_4'$};
      \node[] (5') at (2,-1.2) {$v_5'$};
      
      \draw[red, dashed] [-latex] (h2) edge (h1);
      \draw[red, dashed] [-latex] (2) edge (h2);
      \draw[red, dashed] [-latex, bend right] (3) edge (h2);
      \draw[red, dashed] [-latex, bend right] (4) edge (h2);
      \draw[red, dashed] [-latex, bend right] (5) edge (h2);

      \draw[red, dashed] [-latex] (h1) edge (h1');
      \draw[red, dashed] [-latex] (h2) edge (h2');
      \draw[red, dashed] [-latex] (1) edge (1');
      \draw[red, dashed] [-latex] (2) edge (2');
      \draw[red, dashed] [-latex] (3) edge (3');
      \draw[red, dashed] [-latex] (4) edge (4');
      \draw[red, dashed] [-latex] (5) edge (5');

      \draw[red, dashed] [-latex] (h1') edge (h2');
      \draw[red, dashed] [-latex] (h2') edge (2');
      \draw[red, dashed] [-latex, bend right] (h2') edge (3');
      \draw[red, dashed] [-latex, bend right] (h2') edge (4');
      \draw[red, dashed] [-latex, bend right] (h2') edge (5');
      \draw[blue] [-latex] (1') edge (h1');
      \draw[blue] [-latex] (3') edge (4');
    \end{tikzpicture}
}
\caption{(a) Directed graph. (b) The corresponding graph $G^{\lp}$ that is input to the linear program. Red dashed edges are part of the subgraph.}
\label{fig:linear-program-graph}  
\end{figure}

An example of a graph $G^{\lp}$ and its subgraph $G^{\lp}_{\lat}$ is shown in Figure~\ref{fig:linear-program-graph} (b). As a corollary of Theorem~\ref{thm:linear-program} we obtain the following result.

\begin{corollary} \label{cor:LSC-linear-program}
Let $G=(\cO \sqcup \cL,D)$ be a directed graph. There is an integer solution of  $\text{Lp}(G^{\lp},G^{\lp}_{\lat},Z,$ $P,Y_a)$ with optimal value $|Z| + |P|$ if and only if there exists $Y \subseteq Y_a$ such that there is a system of treks with no sided intersection from $Y$ to $Z \cup P$ in $G$ where the left part of every trek only takes edges in $G_{\lat}$, and the right part of every trek ending in $Z$ only takes edges in $G_{\lat}$.
\end{corollary}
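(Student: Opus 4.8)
The plan is to obtain the statement as a direct application of Theorem~\ref{thm:linear-program} to the auxiliary graph $G^{\lp}$ together with its subgraph $G^{\lp}_{\lat}$, where we regard the source set $Y_a$ as sitting in the unprimed layer $V \subseteq V^{\lp}$ and the sink sets $Z$ and $P$ as their copies $Z',P' \subseteq V'$ via the canonical identification $V \leftrightarrow V'$. With this reading, Theorem~\ref{thm:linear-program} already tells us that an integer solution of value $|Z|+|P|$ exists if and only if there is a subset $Y \subseteq Y_a$ and a system of node-disjoint directed paths from $Y$ to $Z' \cup P'$ in $G^{\lp}$ in which every path terminating in $Z'$ uses only edges of $G^{\lp}_{\lat}$. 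Thus the entire content of the corollary is the equivalence between such path systems in $G^{\lp}$ and the trek systems in $G$ described in the statement, and the whole proof reduces to constructing an explicit bijection and matching up the intersection conditions.

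The key step is a bijection at the level of a single object. Given a trek $\pi=(P_1,P_2)$ in $G$ from a node $a\in Y$ to a node $b\in Z\cup P$ whose left part $P_1$ lies in $G_{\lat}$, I would encode it as the directed path in $G^{\lp}$ obtained by first traversing $P_1$ in reverse, from $a$ up to $\text{top}(\pi)=k$ inside the unprimed layer, then taking the crossover edge $k\to k'$, and finally traversing $P_2$ forward from $k'$ to $b'$ inside the primed layer. Each reversed edge of $P_1$ has the form $v\to h$ with $h\to v\in D_{\lat}$ and hence lies in $D^{\lp}$; the crossover edge and the primed edges $u'\to v'$ with $u\to v\in D$ also lie in $D^{\lp}$, so this is a valid directed path from the unprimed $a$ to the primed $b'$. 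The edge restrictions transfer automatically: the internal edges of the unprimed layer are exactly the reversed latent edges, so every left part is forced to lie in $G_{\lat}$; and a path terminating in $Z'$ uses only $G^{\lp}_{\lat}$-edges precisely when its primed portion uses only edges $u'\to v'$ with $u\to v\in D_{\lat}$, i.e.\ exactly when $P_2$ lies in $G_{\lat}$. Conversely, since the only edges from the unprimed layer into the primed layer are the crossover edges and there is no edge from $V'$ back to $V$, any directed path from an unprimed source to a primed sink crosses over exactly once, at a unique node $k$; splitting at $k$ and reversing the unprimed portion recovers a unique trek, so the map is a bijection.

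It then remains to match the system-level conditions. A system of treks from $Y$ to $Z\cup P$ with distinct starts and distinct ends is sent to a family of paths with distinct unprimed sources in $Y$ and distinct primed sinks in $(Z\cup P)'$, which is exactly a system of directed paths in the sense required by Theorem~\ref{thm:linear-program}. The crucial point is that all left parts live in the unprimed layer $V$ while all right parts live in the primed layer $V'$, and these are disjoint node sets in $G^{\lp}$. Consequently two encoded paths share a node in $G^{\lp}$ if and only if the corresponding left parts share a node (an unprimed collision) or the corresponding right parts share a node (a primed collision); a left part of one trek can never collide with a right part of another, and within a single trek the shared top $k$ is represented by the two distinct nodes $k$ and $k'$, causing no self-intersection. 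This is precisely the statement that the path system has no intersection if and only if the trek system has no sided intersection. Combining the single-object bijection with this equivalence and feeding the result into Theorem~\ref{thm:linear-program} yields the corollary.

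The main obstacle is not any single computation but the bookkeeping in the intersection argument: one must verify carefully that the primed/unprimed split implements exactly the asymmetry built into the notion of sided intersection, including the boundary behaviour at the top node (which belongs to both parts of a trek), the degenerate case of a trivial trek $a=k=b$ encoded by the single crossover edge $a\to a'$, and the case of two treks sharing a top, which must be forbidden on both sides and is indeed captured as an unprimed (and primed) collision. Once these cases are checked, no work beyond invoking Theorem~\ref{thm:linear-program} remains.
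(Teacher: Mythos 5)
Your proposal is correct and follows essentially the same route as the paper: the paper's proof also reduces the corollary to Theorem~\ref{thm:linear-program} via the correspondence between trek systems in $G$ (with the stated edge restrictions) and path systems in $G^{\lp}$, though it dismisses that correspondence as ``easy to see.'' Your write-up simply supplies the details the paper leaves implicit — the reverse-left-part/crossover/right-part encoding, the once-crossing argument, and the matching of sided intersection with node-disjointness via the primed/unprimed split — all of which check out, including the degenerate cases.
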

\begin{proof}
Let $Y \subseteq Y_a$ be any subset. It is easy to see that there is a system of treks with no sided intersection from $Y$ to $Z \cup P$ in $G$ where the left part of every trek only takes edges in $G_{\lat}$ and the right part of every trek ending in $Z$ only takes edges in $G_{\lat}$ if and only if there is a path system with no intersection from $Y$ to $Z \cup P$ in $G^{\lp}$ where where every path ending in $Z$ only has edges in $G^{\lp}_{\lat}$. \looseness=-1
\end{proof}

\subsection{LSC-Identifiability}
Next, we give a recursive algorithm to decide whether a graph is LSC-identifiable. For each observed node $v \in \cO$, we iterate over suitable sets of nodes $H_1,H_2 \subseteq \cL$ and $Z\in\cO$ and then apply Corollary~\ref{cor:LSC-linear-program} to find a tuple $(Y,Z,H_1,H_2)$ that satisfies the latent-subgraph criterion with respect to $v$. By restricting the set of ``allowed nodes'' for $Z$ and for $Y$, we make sure that all conditions of the latent-subgraph criterion are satisfied.  The next two lemmas present requirements for the nodes in $Z$ and in $Y$.

\begin{algorithm}[t]
\caption{Deciding LSC-identifiability.}
\begin{algorithmic}[1]
\REQUIRE Graph $G = (\cO \sqcup \cL, D)$.\\
\ENSURE Solved nodes $S = \{v \in \cO: \opa(v) = \emptyset\}$.
\REPEAT 
    \FOR {$v \in V \setminus S$}
        \FOR {$H_1, H_2 \subseteq \mathcal{L}$}
            \STATE {
                $Z_a =  \{w \in S \setminus (\{v\} \cup \opa(v)): \text{there is a latent trek from a node in } H_1 \text{ to } w \text{ or }$\\
                \hspace{4.91cm} $\text{there is a directed path in } G_{\lat} \text{ from a node in } H_2 \text{ to } w \}.$
            }
            \FOR {$Z \subseteq Z_a$ such that $|Z|=|H_1| + |H_2|$}
                \STATE {$Y_a^c =  \{w \in \cO: w \in \elr_{H_2,H_1}(Z \cup \{v\}) \setminus S \text{ or } w \in \text{lr}_{H_2,H_1}(Z \cup \{v\})\}$ and $Y_a = \cO \setminus Y_a^c$.}
                \IF {there is an integer solution of  $\text{Lp}(G^{\lp},G^{\lp}_{\lat},Z,\opa(v),Y_a)$  with optimal value $|Z| + |\opa(v)|$}
                    \STATE {$S = S \cup \{v\}$.}
                    \BREAK { all for-loops.}
                \ENDIF
            \ENDFOR
        \ENDFOR
    \ENDFOR
\UNTIL{$S = \cO$ or no change has occurred in the last iteration.}
\RETURN ``yes'' if $S=\cO$, ``no'' otherwise.
\end{algorithmic}
\label{alg:check-LSC-id}
\end{algorithm}

\begin{lemma} \label{lem:allowed-for-Z}
Suppose that $(Y, Z, H_1, H_2) \in 2^{\cO\setminus\{v\}}\times 2^{\cO\setminus \{v\}}\times 2^\cL\times 2^\cL$ is a tuple satisfying the LSC with respect to $v$. Then, for each node $z \in Z$, there is a latent trek from a node in $H_1$ to $z$ or there is a directed path in $G_{\lat}$ from a node in $H_2$ to $z$.
\end{lemma}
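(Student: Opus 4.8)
The plan is to prove the contrapositive of a structural consequence of Condition (iii). Fix a tuple $(Y,Z,H_1,H_2)$ satisfying the LSC with respect to $v$, and take a node $z \in Z$. By Condition (iii), there is a system of treks with no sided intersection from $Y$ to $\opa(v) \cup Z$ in $G$ where the left part of every trek only takes edges in $G_{\lat}$, and the right part of every trek ending in $Z$ only takes edges in $G_{\lat}$. In particular, there is exactly one trek $\pi_z=(P_1,P_2)$ in this system whose right endpoint is $z$, and by the final clause of Condition (iii), its right part $P_2$ is a directed path in $G_{\lat}$ terminating in $z$. This is the trek that I will dissect to produce the desired latent trek from $H_1$ or directed path from $H_2$.

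First I would record the structure of this single trek $\pi_z$. It has a top node $k = \text{top}(\pi_z) \in \cO \sqcup \cL$, a left part $P_1$ (a directed path in $G_{\lat}$ from $k$ to its left endpoint $y \in Y$), and a right part $P_2$ (a directed path in $G_{\lat}$ from $k$ to $z$). The key leverage is Condition (ii): the pair $(H_1,H_2)$ trek separates $Y$ from $Z \cup \{v\}$ in $G_{\lat}$. Since $\pi_z$ is itself a latent trek (both parts lie in $G_{\lat}$) from $y \in Y$ to $z \in Z \cup \{v\}$, the trek-separation condition forces either $P_1$ to contain a node of $H_1$ or $P_2$ to contain a node of $H_2$. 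I would split into these two cases.

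In the case where the right part $P_2$ contains a node $h \in H_2$, the subpath of $P_2$ running from $h$ to $z$ is a directed path in $G_{\lat}$ from a node in $H_2$ to $z$, which is precisely one of the two alternatives in the conclusion. In the case where the left part $P_1$ contains a node $h \in H_1$, I would splice together the subpath of $P_1$ from $h$ to $y$ (reversed, serving as the new left part with source $h$) with the entire right part $P_2$ from $h$'s position down to $z$; more carefully, the latent trek I construct has top node $h \in H_1$, left part the portion of $P_1$ from $h$ to $y$, and right part obtained by concatenating the portion of $P_1$ from $h$ back to $k$ with all of $P_2$ from $k$ to $z$. Since every edge of $P_1$ and $P_2$ lies in $G_{\lat}$, the result is a genuine latent trek from a node in $H_1$ to $z$, giving the other alternative.

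The main obstacle I anticipate is book-keeping in the $H_1$ case: forming a valid latent trek with top $h$ requires that the concatenated right part be a directed path in $G_{\lat}$, which follows because $P_1$ is directed toward $y$ (so its reverse from $h$ toward $k$ is directed away from $h$) and $P_2$ is directed toward $z$, and both pass through $k$ — so glued at $k$ they form one directed path in $G_{\lat}$ emanating from $h$. I must also confirm that this construction yields a trek \emph{to} $z$ rather than some other node, which is automatic since $P_2$ terminates at $z$ by choice of $\pi_z$. No further genericity or rank arguments are needed; the lemma is a purely combinatorial extraction from Conditions (ii) and (iii), so the argument should be short once the trek $\pi_z$ is isolated.
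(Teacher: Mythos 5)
Your overall strategy matches the paper's: extract from the Condition (iii) system the unique trek $\pi_z=(P_1,P_2)$ ending at $z$, observe that both of its parts lie in $G_{\lat}$ so that it is a latent trek from some $y\in Y$ to $z\in Z\cup\{v\}$, and then apply the trek separation of Condition (ii) to split into two cases. The $H_2$ case is handled correctly. The problem is your construction in the $H_1$ case. You attempt to build a trek with \emph{top} $h$ whose right part is ``the portion of $P_1$ from $h$ back to $k$'' concatenated with $P_2$. But $P_1$ is a directed path from the top $k$ to $y$, so its edges between $k$ and $h$ point \emph{toward} $h$; traversing them from $h$ back to $k$ goes against their orientation, and your claim that ``its reverse from $h$ toward $k$ is directed away from $h$'' is false. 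The glued object has the shape $h \leftarrow \cdots \leftarrow k \rightarrow \cdots \rightarrow z$, which is a trek with top $k$, not a directed path, so the pair you propose is not a valid trek and this step fails.

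The underlying confusion is about the definition of a trek: a trek \emph{from} $h$ \emph{to} $z$ is an ordered pair $(Q_1,Q_2)$ of directed paths with a common source (the top) in which $h$ is the \emph{sink} of $Q_1$ and $z$ is the sink of $Q_2$; the ``from'' node need not be the top. With this in mind, the $H_1$ case is immediate and requires no edge reversal: since $h$ lies on $P_1$, take $Q_1$ to be the initial segment of $P_1$ from $k$ to $h$, and take $Q_2 = P_2$. Both are directed paths in $G_{\lat}$ with common source $k$, so $(Q_1,Q_2)$ is a latent trek from $h \in H_1$ to $z$, which is exactly the first alternative in the lemma's conclusion. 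This truncation is precisely the step that the paper's (terse) proof leaves implicit after noting that trek separation forces one of the two containments; with this repair your argument coincides with the paper's.
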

\begin{proof}
By Condition~(iii) of the LSC there is a subset $Y_Z \subseteq Z$ with $|Y_Z|=|Z|$ such that there is a system of latent treks $\Pi$ from $Y_Z$ to $Z$. Recall that a latent trek is a trek in the subgraph $G_{\lat}$. On the other hand, by Condition~(ii) of the LSC the pair $(H_1, H_2)$ trek separates $Y_Z$ and $Z \cup \{v\}$ in the subgraph $G_{\lat}$. Hence, for any trek $\pi \in \Pi$, either the left part contains a node in $H_1$ or the right part contains a node in $H_2$, which concludes the proof.
\end{proof}

\begin{lemma} \label{lem-allowed-for-Y}
Suppose that $(Y, Z, H_1, H_2) \in 2^{\cO\setminus\{v\}}\times 2^{\cO\setminus \{v\}}\times 2^\cL\times 2^\cL$ is a tuple satisfying the LSC with respect to $v$. Then, each node $y \in Y$ is not in $\lr_{H_2,H_1}(Z \cup \{v\})$.
\end{lemma}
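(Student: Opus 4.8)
The plan is to prove Lemma~\ref{lem-allowed-for-Y} by contradiction, leveraging Condition~(iii) of the LSC together with the trek-separation structure from Condition~(ii). So suppose, for the sake of contradiction, that some node $y \in Y$ satisfies $y \in \lr_{H_2,H_1}(Z \cup \{v\})$. By the definition of latent reachability, this means there is a latent trek $\pi_0 = (Q_1, Q_2)$ from some node $w \in Z \cup \{v\}$ to $y$ such that the left part $Q_1$ (ending in $w$) avoids $H_2$ and the right part $Q_2$ (ending in $y$) avoids $H_1$. Note the swapped roles of $H_1$ and $H_2$ here, which will be important to keep straight.

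First I would recall from Condition~(iii) of the LSC that there is a system of treks $\Pi$ with no sided intersection from $Y$ to $\opa(v) \cup Z$ in $G$ such that the left part of every trek only takes edges in $G_{\lat}$ and the right part of every trek ending in $Z$ only takes edges in $G_{\lat}$. In particular, since $y \in Y$, there is a trek $\pi_y \in \Pi$ originating at $y$; its left part (the $Y$-side) lies entirely in $G_{\lat}$. The idea is to glue the latent trek $\pi_0$ (witnessing the reachability of $y$) together with $\pi_y$ at the node $y$ to produce a new trek from $w \in Z \cup \{v\}$ to whatever node $b$ in $\opa(v) \cup Z$ is the endpoint of $\pi_y$. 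Concretely, the reversed right part $Q_2$ of $\pi_0$ runs $w$'s top down to $y$, and $\pi_y$'s left part runs from its top down to $y$; I would instead reconsider the orientation so that one obtains a genuine latent trek from $w$ to $b$: the left part is the reversed composition giving a directed path into $w$, and the right part gives a directed path into $b$, with both parts lying in $G_{\lat}$ (using that $Q_2$ avoids nothing forbidden and that $\pi_y$'s $Y$-side is in $G_{\lat}$).

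The key step is then to derive a contradiction with Condition~(ii), which says $(H_1, H_2)$ trek separates $Y$ from $Z \cup \{v\}$ in $G_{\lat}$. I would argue that the newly constructed trek is a latent trek from $w \in Z \cup \{v\}$ to $y \in Y$ whose part ending at the $Z \cup \{v\}$-side avoids $H_1$ and whose part ending at the $Y$-side avoids $H_2$ — precisely the complementary avoidance pattern to what trek separation by $(H_1, H_2)$ demands. That is, trek separation requires every trek between $Y$ and $Z\cup\{v\}$ to have its $Y$-side hit $H_1$ or its $(Z\cup\{v\})$-side hit $H_2$; but the trek I build, using the avoidance guarantees from $y \in \lr_{H_2,H_1}(Z\cup\{v\})$, violates exactly this, giving the contradiction.

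The main obstacle I anticipate is bookkeeping the directions and the swapped $(H_1, H_2)$ versus $(H_2, H_1)$ roles correctly: the reachability notation $\lr_{H_2,H_1}$ pairs the avoidance sets with the source-side and sink-side of the \emph{reachability} trek, whereas trek separation in Condition~(ii) pairs $(H_1, H_2)$ with the $Y$-side and $(Z\cup\{v\})$-side. I must verify that after gluing and re-orienting, the surviving trek genuinely has its $(Z\cup\{v\})$-part avoiding $H_2$ and its $Y$-part avoiding $H_1$, so that it is \emph{not} trek-separated by $(H_1,H_2)$ in $G_{\lat}$. A secondary subtlety is ensuring the glued object is a bona fide trek (two directed paths sharing a single top), rather than a walk with a reversal in the middle; this requires checking that the top of the resulting trek is well-defined and that all constituent edges indeed lie in $G_{\lat}$, which follows since $\pi_0$ is a latent trek and the relevant part of $\pi_y$ lies in $G_{\lat}$ by Condition~(iii).
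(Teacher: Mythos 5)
Your core instinct---that $y \in \lr_{H_2,H_1}(Z\cup\{v\})$ should clash directly with the trek-separation requirement in Condition~(ii)---is exactly right, and it is in fact the entirety of the paper's proof. But your proposal takes a detour that would fail. Gluing the reachability trek $\pi_0$ (from $w \in Z\cup\{v\}$ to $y$) to the Condition~(iii) trek $\pi_y$ (from $y$ to $b \in \opa(v)\cup Z$) at the node $y$ does not produce a trek: the right part $Q_2$ of $\pi_0$ and the left part of $\pi_y$ are both directed paths pointing \emph{into} $y$, so $y$ is a collider of the concatenated walk, and no re-orientation or choice of ``top'' repairs this---the check you defer to the end would simply fail. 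Moreover, even if the glued object were a trek, it would run between $Z\cup\{v\}$ and $\opa(v)\cup Z$, which is not the pair of sets that Condition~(ii) separates, so it could not yield the desired contradiction. Condition~(iii) plays no role in this lemma.

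The second problem is the bookkeeping you yourself flag as the main obstacle, and your write-up is internally inconsistent about it: in your key step you claim the constructed trek has its $(Z\cup\{v\})$-side avoiding $H_1$ and its $Y$-side avoiding $H_2$; that pattern does \emph{not} negate trek separation by $(H_1,H_2)$, whose negation requires a trek whose $Y$-side avoids $H_1$ \emph{and} whose $(Z\cup\{v\})$-side avoids $H_2$. Your final paragraph states this correct pattern but leaves it unverified. The verification is immediate from the definitions and needs nothing else: $y\in\lr_{H_2,H_1}(Z\cup\{v\})$ means there is a latent trek $(P_1,P_2)$ from some $a\in Z\cup\{v\}$ to $y$ in which $P_1$ (ending at $a$) avoids $H_2$ and $P_2$ (ending at $y$) avoids $H_1$, since the first subscript of $\lr$ pairs with the part ending at the source argument. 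Reading this same trek from $y$ to $a$, it is a trek in $G_{\lat}$ from $Y$ to $Z\cup\{v\}$ whose $Y$-side avoids $H_1$ and whose $(Z\cup\{v\})$-side avoids $H_2$, so $(H_1,H_2)$ does not trek separate $Y$ from $Z\cup\{v\}$ in $G_{\lat}$, contradicting Condition~(ii). That single observation is the whole proof; the gluing construction should be discarded.
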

\begin{proof}
Consider a node $y \in Y$ and suppose that $y \in \lr_{H_2,H_1}(Z \cup \{v\})$. Then, the pair $(H_1, H_2)$ does not trek separate $y$ and $Z \cup \{v\}$, which is a contradiction to Condition~(ii) of the LSC. 
\end{proof}

When searching for tuples that satisfy the latent-subgraph criterion with respect to $v$, we also have to make sure that, for a possible solution $(Y,Z,H_1,H_2)$, each node $w\in Z\cup(Y\cap \elr_{H_2,H_1}(Z\cup\{v\}))$ was solved before. Our procedure to decide LSC-identifiability is formalized in Algorithm~\ref{alg:check-LSC-id}. In each iteration, the sets $Z_a$ and $Y_a$ denote the sets of “allowed nodes” for $Z$ and $Y$, respectively. We show in the next theorem that the algorithm is sound and complete for deciding LSC-identifiability. The proof is given in Appendix~\ref{sec:other-proofs}.

\begin{theorem} \label{thm:sound-and-complete}
A graph $G=(\cO \sqcup \cL, D)$ is LSC-identifiable if and only if Algorithm \ref{alg:check-LSC-id} returns ``yes''.
\end{theorem}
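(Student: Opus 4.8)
The plan is to establish the two directions of the equivalence separately, since ``sound and complete'' naturally decomposes into soundness (if the algorithm returns ``yes'', then $G$ is LSC-identifiable) and completeness (if $G$ is LSC-identifiable, then the algorithm returns ``yes''). The central observation tying everything together is Corollary~\ref{cor:LSC-linear-program}, which certifies that the integer-linear-program check in the inner \textbf{if}-statement is equivalent to the existence of the trek system in Condition~(iii) of the latent-subgraph criterion. So the crux of the proof is showing that the restrictions encoded in the ``allowed-node'' sets $Z_a$ and $Y_a$, together with the ILP check, exactly capture the existence of a tuple $(Y,Z,H_1,H_2)$ satisfying \emph{all} of Conditions~(i)--(iii) with respect to $v$, and moreover one for which the required earlier-solved nodes were indeed already placed in $S$.

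For soundness, I would argue by induction on the order in which nodes are added to the solved set $S$. The base case is the initialization $S=\{v\in\cO:\opa(v)=\emptyset\}$, where rational identifiability is vacuous since $\oLambda_{\opa(v),v}$ is empty. For the inductive step, suppose $v$ is added to $S$ during some iteration. Then the algorithm found $H_1,H_2$, a set $Z\subseteq Z_a$ with $|Z|=|H_1|+|H_2|$, and an integer solution of $\text{Lp}(G^{\lp},G^{\lp}_{\lat},Z,\opa(v),Y_a)$ of optimal value $|Z|+|\opa(v)|$. I would verify that the resulting tuple $(Y,Z,H_1,H_2)$---with $Y$ the source set of the path system extracted via Corollary~\ref{cor:LSC-linear-program}---satisfies the LSC: Condition~(i) follows from the cardinality constraints $|Z|=|H_1|+|H_2|$ and the equality of the optimal flow value to $|Z|+|\opa(v)|$ forcing $|Y|=|\opa(v)|+|Z|$ together with $Z\cap\opa(v)=\emptyset$ (built into $Z_a$); Condition~(iii) is immediate from Corollary~\ref{cor:LSC-linear-program}. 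The key point is Condition~(ii): I must show that restricting $Y$ to $Y_a=\cO\setminus Y_a^c$ forces $(H_1,H_2)$ to trek separate $Y$ from $Z\cup\{v\}$ in $G_{\lat}$. This is precisely the content of Lemma~\ref{lem-allowed-for-Y} read in reverse---excluding $\lr_{H_2,H_1}(Z\cup\{v\})$ from the allowed sources guarantees no surviving latent trek, i.e.\ trek separation holds. Finally, since $Z\subseteq Z_a\subseteq S$ and the nodes of $Y\cap\elr_{H_2,H_1}(Z\cup\{v\})$ are also excluded from $Y_a^c$-removal in a way that keeps them in $S$, the induction hypothesis supplies rational identifiability of all effects into $Z\cup(Y\cap\elr_{H_2,H_1}(Z\cup\{v\}))$, so Theorem~\ref{thm:LSC} applies and $\oLambda_{\opa(v),v}$ is rationally identifiable.

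For completeness, I would argue the contrapositive-style statement directly: if $G$ is LSC-identifiable, then by definition there is a recursive certification, i.e.\ an ordering $v_1,\dots,v_m$ of $\cO$ and tuples $(Y^{(i)},Z^{(i)},H_1^{(i)},H_2^{(i)})$ satisfying the LSC with respect to $v_i$ such that every node in $Z^{(i)}\cup(Y^{(i)}\cap\elr_{H_2^{(i)},H_1^{(i)}}(Z^{(i)}\cup\{v_i\}))$ appears earlier in the ordering. I would show by induction along this ordering that the algorithm eventually adds each $v_i$ to $S$. Given that $v_1,\dots,v_{i-1}\in S$, the witnessing tuple for $v_i$ satisfies, by Lemma~\ref{lem:allowed-for-Z}, the defining property of $Z_a$, so $Z^{(i)}\subseteq Z_a$; and by Lemma~\ref{lem-allowed-for-Y}, $Y^{(i)}\subseteq Y_a$. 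Condition~(iii) for this tuple then yields, via Corollary~\ref{cor:LSC-linear-program}, an integer solution of the ILP with optimal value $|Z^{(i)}|+|\opa(v_i)|$, so the \textbf{if}-branch triggers and $v_i$ is added. Since the \textbf{repeat}-loop continues as long as $S$ grows and there are finitely many nodes, all of $\cO$ is solved and the algorithm returns ``yes''.

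The main obstacle I anticipate is the precise bookkeeping in Condition~(ii) and the ``solved-before'' requirement, because the algorithm's allowed-node sets use the \emph{swapped} avoidance pairs $\elr_{H_2,H_1}$ and $\lr_{H_2,H_1}$ while the LSC and Theorem~\ref{thm:LSC} are stated with various orderings of $H_1,H_2$; I would need to check carefully that the set $Y_a^c$ removes exactly the nodes forbidden by Lemma~\ref{lem-allowed-for-Y} \emph{and} no node that a valid witness tuple would need as a source, so that the allowed-node restriction is neither too permissive (which would break soundness) nor too restrictive (which would break completeness). A secondary subtlety is confirming that extracting $Y$ from the ILP's integer optimum genuinely yields a set of the correct cardinality with $Y\subseteq Y_a$, rather than merely a flow value---but this is handled by the integrality in Corollary~\ref{cor:LSC-linear-program} and the node-capacity constraints~(ii) and~(iv) of the linear program, which force the support to decompose into vertex-disjoint paths.
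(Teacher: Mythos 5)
Your proposal is correct and takes essentially the same route as the paper's own proof: both directions proceed by induction (on the order in which nodes enter $S$ for soundness, and along the recursive certification ordering for completeness), with Lemmas~\ref{lem:allowed-for-Z} and~\ref{lem-allowed-for-Y} justifying the allowed-node sets $Z_a$ and $Y_a$, Corollary~\ref{cor:LSC-linear-program} translating Condition~(iii) into the ILP check, and Theorem~\ref{thm:LSC} closing the soundness step. The only compression worth noting is that in your completeness direction, $Y^{(i)}\subseteq Y_a$ does not follow from Lemma~\ref{lem-allowed-for-Y} alone: that lemma only excludes $\lr_{H_2,H_1}(Z\cup\{v\})$, while excluding $\elr_{H_2,H_1}(Z\cup\{v\})\setminus S$ additionally requires your inductive hypothesis that earlier-certified nodes already lie in $S$---exactly the step the paper makes explicit.
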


\begin{remark}
If we only allow sets $H_1, H_2$ with $|H_1|+|H_2| \leq k$ in line 3 for fixed $k \in \mathbb{N}$, then the algorithm solves the integer linear program at most $\cO^{2+k} \cL^{2k}$ times. Therefore, with this restriction, the algorithm is polynomial time if Conjecture~\ref{conj:integer-solution} is true. On the other hand, without bounding $|H_1|+|H_2|$ deciding LSC-identifiability is NP-hard. This can be seen by the fact that deciding whether a graph satisfies the latent-factor half-trek criterion is NP-hard \citep{barber2022halftrek}, and recalling that the latent-subgraph criterion strictly subsumes the latent-factor half-trek criterion (Remark~\ref{rem:comparison-htc}).
\end{remark}

\subsection{Numerical Experiments} \label{sec:numerical-experiments}
We conduct a small simulation study to demonstrate the practical applicability of Algorithm~\ref{alg:check-LSC-id}. An implementation of Algorithm~\ref{alg:check-LSC-id} and code to reproduce the experiments is available at \url{https://github.com/NilsSturma/LSC}. We randomly generate graphs on $|\cO|=10$ observed nodes and $|\cL|=5$ latent nodes.  For different edge probabilities $p$, we generate adjacency matrices $A \in \mathbb{R}^{15 \times 15}$ of directed acyclic graphs by independently sampling $A_{ij} \sim \text{Ber}(p)$ if $i < j$ and setting $A_{ij}=0$ else. The graph $G=(V,D)$ on the nodes $V=\{1, \ldots, 15\}$ is then obtained by randomly splitting $V$ into a set with $10$ observed nodes and a set with $5$ latent nodes, and by setting $D=\{i \rightarrow j: A_{ij}=1\}$. When checking LSC-identifiability with Algorithm~\ref{alg:check-LSC-id}, we bound $|H_1|+|H_2|\leq k$ in line $3$ for $k=1,2,3$.

\begin{table*}[t]\centering 
\begin{tabular}{@{}c|rrr@{}}\toprule
edge prob. & $k=1$ & $k=2$ & $k=3$ \\ 
\midrule
0.15 & 849 & 850 & 850 \\
0.20 & 727 & 734 & 734 \\
0.25 & 599 & 607 & 608 \\
0.30 & 423 & 441 & 442 \\
0.35 & 270 & 287 & 287 \\
0.40 & 164 & 180 & 180 \\
0.45 & 103 & 109 & 109 \\
\midrule
Total & 3135 & 3208 & 3210 \\
\bottomrule
\end{tabular}
\caption{Counts of graphs that are certified to be LSC-identifiable by Algorithm~\ref{alg:check-LSC-id} with the bound $|H_1|+|H_2|\leq k$. For each edge probability a total number of $1000$ graphs was randomly generated.}
\label{table:10-5}
\end{table*}

Table~\ref{table:10-5} lists counts of how many out of $1000$ randomly generated graphs for each edge probability are LSC-identifiable by applying Algorithm~\ref{alg:check-LSC-id} with different bounds $k$ on $|H_1|+|H_2|$. As expected, the denser the graph, the less graphs are certified to be LSC-identifiable. We also see that the gap between the columns $k=2$ and $k=3$ is very small. That is, $|H_1|+|H_2|\leq 2$ is in most cases enough to certify LSC-identifiability, and larger sets $H_1$ and $H_2$  are rarely needed.

\section{The Canonical Model} \label{sec:canonical}
As mentioned in the introduction, the dominant approach in state-of-the-art methods to handle settings with explicitly modeled latent variables is to transform the models into \emph{canonical models} that correspond to graphs where each latent node is a source node. The procedure is, for example, described by \citet{hoyer2008estimation}; we summarize it in the following definition.
\begin{definition}
    Let $G=(\cO \sqcup \cL,D)$ be a graph. The \emph{canonical graph} $G_{\can}=(\cO \sqcup \cL,D_{\can})$ has the same set of nodes $\cO \sqcup \cL$, and the edges $D_{\can}$ are given as follows:
    \begin{itemize}
        \item[(i)] For $v,w \in \cO$, we have that $v \rightarrow w \in D_{\can}$ whenever $v \rightsquigarrow w \in G$.
        \item[(ii)] For $v \in \cL$ and $w \in \cO$,  we have that $v \rightarrow w \in D_{\can}$ whenever there is a directed path from $v$ to $w$ in $G_{\lat}$.
    \end{itemize}
    We also say that the graph $G_{\can}=(\cO \sqcup \cL,D_{\can})$ is the \emph{canonicalization} of $G$.
\end{definition}

\begin{figure}[t]
\subfloat[]{
    \centering
    \tikzset{
      every node/.style={circle, inner sep=0.3mm, minimum size=0.45cm, draw, thick, black, fill=white, text=black},
      every path/.style={thick}
    }
    \begin{tikzpicture}[align=center]
      \node[fill=lightgray] (h1) at (-1.5,1) {$h_1$};
      \node[fill=lightgray] (h2) at (0.5,1) {$h_2$};
      \node[fill=lightgray] (h3) at (-0.5,-1) {$h_3$};
      \node[] (1) at (-2,0) {$v_1$};
      \node[] (2) at (-1,0) {$v_2$};
      \node[] (3) at (-0,0) {$v_3$};
      \node[] (4) at (1,0) {$v_4$};
      \node[] (5) at (2,0) {$v_5$};
      
      \draw[red, dashed] [-latex] (h1) edge (1);
      \draw[red, dashed] [-latex] (h1) edge (2);
      \draw[red, dashed] [-latex] (h1) edge (h2);
      \draw[red, dashed] [-latex] (h2) edge (2);
      \draw[red, dashed] [-latex] (h2) edge (3);
      \draw[red, dashed] [-latex] (h2) edge (4);
      \draw[red, dashed] [-latex] (h2) edge (5);
      \draw[red, dashed] [-latex] (h3) edge (3);
      \draw[red, dashed] [-latex] (h3) edge (4);
      
      \draw[blue] [-latex] (1) edge (h3);
    \end{tikzpicture}
    \qquad \qquad \qquad
   \begin{tikzpicture}[align=center]
      \node[fill=lightgray] (h1) at (-1.5,1) {$h_1$};
      \node[fill=lightgray] (h2) at (0.5,1) {$h_2$};
      \node[fill=lightgray] (h3) at (0.5,-1) {$h_3$};
      \node[] (1) at (-2,0) {$v_1$};
      \node[] (2) at (-1,0) {$v_2$};
      \node[] (3) at (-0,0) {$v_3$};
      \node[] (4) at (1,0) {$v_4$};
      \node[] (5) at (2,0) {$v_5$};
      
      \draw[red, dashed] [-latex] (h1) edge (1);
      \draw[red, dashed] [-latex] (h1) edge (2);
      \draw[red, dashed] [-latex] (h1) edge (3);
      \draw[red, dashed] [-latex] (h1) edge (4);
      \draw[red, dashed] [-latex] (h1) edge (5);
      \draw[red, dashed] [-latex] (h2) edge (2);
      \draw[red, dashed] [-latex] (h2) edge (3);
      \draw[red, dashed] [-latex] (h2) edge (4);
      \draw[red, dashed] [-latex] (h2) edge (5);
      \draw[red, dashed] [-latex] (h3) edge (3);
      \draw[red, dashed] [-latex] (h3) edge (4);
      
      \draw[blue] [-latex, bend right] (1) edge (3);
      \draw[blue] [-latex, in=-140, out=-40] (1) edge (4);
    \end{tikzpicture}
    }

\subfloat[]{
    \centering
    \tikzset{
      every node/.style={circle, inner sep=0.3mm, minimum size=0.45cm, draw, thick, black, fill=white, text=black},
      every path/.style={thick}
    }
    \begin{tikzpicture}[align=center]
      \node[fill=lightgray] (h1) at (-1.875,1) {$h_1$};
      \node[fill=lightgray] (h2) at (-0.625,1) {$h_2$};
      \node[fill=lightgray] (h3) at (0.625,1) {$h_3$};
      \node[] (1) at (-2.5,0) {$v_1$};
      \node[] (2) at (-1.25,0) {$v_2$};
      \node[] (3) at (-0,0) {$v_3$};
      \node[] (4) at (1.25,0) {$v_4$};
      \node[] (5) at (2.5,0) {$v_5$};
      \node[] (6) at (3.75,0) {$v_6$};
      
      \draw[red, dashed] [-latex] (h1) edge (1);
      \draw[red, dashed] [-latex] (h1) edge (h2);
      \draw[red, dashed] [-latex] (h2) edge (2);
      \draw[red, dashed] [-latex] (h2) edge (h3);
      \draw[red, dashed] [-latex] (h3) edge (3);
      \draw[red, dashed] [-latex] (h3) edge (4);
      \draw[red, dashed] [-latex] (h3) edge (5);
      \draw[red, dashed] [-latex] (h3) edge (6);
      
      \draw[blue] [-latex] (5) edge (6);
    \end{tikzpicture}
    \qquad \qquad \qquad
   \begin{tikzpicture}[align=center]
      \node[fill=lightgray] (h1) at (-1.875,1) {$h_1$};
      \node[fill=lightgray] (h2) at (-0.625,-1) {$h_2$};
      \node[fill=lightgray] (h3) at (0.625,1) {$h_3$};
      \node[] (1) at (-2.5,0) {$v_1$};
      \node[] (2) at (-1.25,0) {$v_2$};
      \node[] (3) at (-0,0) {$v_3$};
      \node[] (4) at (1.25,0) {$v_4$};
      \node[] (5) at (2.5,0) {$v_5$};
      \node[] (6) at (3.75,0) {$v_6$};
      
      \draw[red, dashed] [-latex] (h1) edge (1);
      \draw[red, dashed] [-latex] (h1) edge (2);
      \draw[red, dashed] [-latex] (h1) edge (3);
      \draw[red, dashed] [-latex] (h1) edge (4);
      \draw[red, dashed] [-latex] (h1) edge (5);
      \draw[red, dashed] [-latex] (h1) edge (6);
      \draw[red, dashed] [-latex] (h2) edge (2);
      \draw[red, dashed] [-latex] (h2) edge (3);
      \draw[red, dashed] [-latex] (h2) edge (4);
      \draw[red, dashed] [-latex] (h2) edge (5);
      \draw[red, dashed] [-latex] (h2) edge (6);
      \draw[red, dashed] [-latex] (h3) edge (3);
      \draw[red, dashed] [-latex] (h3) edge (4);
      \draw[red, dashed] [-latex] (h3) edge (5);
      \draw[red, dashed] [-latex] (h3) edge (6);
      
      \draw[blue] [-latex] (5) edge (6);
    \end{tikzpicture}
    }

        \caption{Two graphs and their canonicalizations. Left graphs are rationally identifiable, while right graphs are not.}
    \label{fig:example-canonical-1}  
\end{figure}
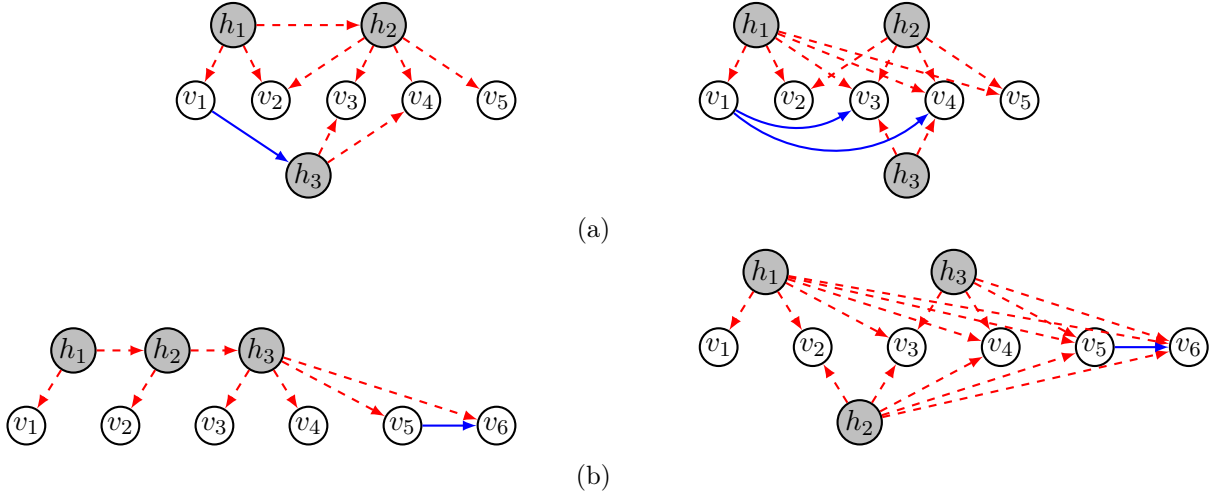

Figure~\ref{fig:example-canonical-1} shows two graphs together with their canonicalizations. In canonical graphs, every latent node is a source node. It is easy to see that the model corresponding to a given graph $G$ is always a submodel of the model defined by its canonicalization $G_{\can}$. This is formally shown in the next lemma.
\begin{lemma} \label{lem:can-submodel}
    Let $G=(\cO \sqcup \cL,D)$ be a graph. Then, it holds that $\cM(G) \subseteq \cM(G_{\can})$.
\end{lemma}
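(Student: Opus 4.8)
The plan is to show that every covariance matrix $\Sigma = \tau_G(\Lambda, \Phi)$ arising from parameters $(\Lambda, \Phi) \in \Theta_G$ can be reproduced by a suitable choice of parameters $(\Lambda', \Phi') \in \Theta_{G_{\can}}$. Since $\Sigma$ depends on $(\Lambda, \Phi)$ only through the pair $(\oLambda, \Omega)$ via $\Sigma = (I - \oLambda)^{-\top}\Omega(I - \oLambda)^{-1}$, it suffices to realize the \emph{same} $\oLambda$ and $\Omega$ inside the canonical parametrization. The key structural fact about $G_{\can}$ is that every latent node is a source, so for any admissible $\Lambda'$ the blocks $(\Lambda')_{\cO,\cL}$ and $(\Lambda')_{\cL,\cL}$ vanish. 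Consequently the canonical semi-direct effect matrix collapses to $\oLambda' = (\Lambda')_{\cO,\cO}$ and the canonical noise matrix collapses to $\Omega' = (\Lambda')_{\cL,\cO}^{\top}\Phi'_{\cL,\cL}(\Lambda')_{\cL,\cO} + \Phi'_{\cO,\cO}$.

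Guided by this, I would define the candidate parameters as follows. Set $\Phi' = \Phi$, set $(\Lambda')_{\cO,\cO} = \oLambda$, set $(\Lambda')_{\cL,\cO} = (I - \Lambda_{\cL,\cL})^{-1}\Lambda_{\cL,\cO}$ (this is well-defined since $(\Lambda, \Phi) \in \Theta_G$ forces $I - \Lambda_{\cL,\cL}$ to be invertible), and set the remaining two blocks to zero. With these choices a direct substitution gives $\oLambda' = \oLambda$ and $\Omega' = (\Lambda')_{\cL,\cO}^{\top}\Phi_{\cL,\cL}(\Lambda')_{\cL,\cO} + \Phi_{\cO,\cO} = \Omega$, matching the original definitions exactly. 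Hence $\tau_{G_{\can}}(\Lambda', \Phi') = (I - \oLambda)^{-\top}\Omega(I - \oLambda)^{-1} = \Sigma$.

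What remains is to confirm that $(\Lambda', \Phi')$ genuinely lies in the domain $\Theta_{G_{\can}}$, i.e., that $\Lambda'$ has support $D_{\can}$ and that $\Lambda' \in \mathbb{R}^{D_{\can}}_{\reg}$. The observed-to-observed block $(\Lambda')_{\cO,\cO} = \oLambda$ has support exactly $\{v \to w : v \rightsquigarrow w \in G\}$, which matches the edges added by part (i) of the canonicalization. The regularity part is immediate: $I - \oLambda' = I - \oLambda$ is invertible because $(\Lambda, \Phi) \in \Theta_G$, and $I - (\Lambda')_{\cL,\cL} = I$ is trivially invertible, while positivity of $\Phi' = \Phi$ is inherited.

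The main step, and the only genuinely graph-theoretic one, is to verify the support of the latent-to-observed block. I would expand $M := (I - \Lambda_{\cL,\cL})^{-1}\Lambda_{\cL,\cO}$ using the Neumann/path expansion $(I - \Lambda_{\cL,\cL})^{-1} = \sum_{k \geq 0} \Lambda_{\cL,\cL}^{k}$, so that the entry $M_{h,v}$ is a sum of products $\lambda_{h z_1}\lambda_{z_1 z_2}\cdots\lambda_{z_{k-1} v}$ over directed walks $h \to z_1 \to \cdots \to v$ whose intermediate nodes lie in $\cL$ and whose last edge lands on $v \in \cO$. Every such walk uses only edges of $G_{\lat}$, and the corresponding term can be nonzero only if there is an actual directed path from $h$ to $v$ in $G_{\lat}$. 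By part (ii) of the canonicalization this is exactly the condition under which $h \to v \in D_{\can}$, so $M$ has support contained in the latent-to-observed edges of $G_{\can}$. This establishes $(\Lambda', \Phi') \in \Theta_{G_{\can}}$ and completes the inclusion $\cM(G) \subseteq \cM(G_{\can})$.
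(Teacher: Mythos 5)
Your proof is correct and takes essentially the same route as the paper's: the identical construction $(\Lambda')_{\cO,\cO}=\oLambda$, $(\Lambda')_{\cL,\cO}=(I-\Lambda_{\cL,\cL})^{-1}\Lambda_{\cL,\cO}$, $(\Lambda')_{\cO,\cL}=(\Lambda')_{\cL,\cL}=0$, $\Phi'=\Phi$, followed by the same regularity check and the same computation showing $\Omega'=\Omega$ and hence $\tau_{G_{\can}}(\Lambda',\Phi')=\tau_G(\Lambda,\Phi)$. The only difference is that you make explicit, via the path expansion of $(I-\Lambda_{\cL,\cL})^{-1}$, the support verification that the paper dispatches with ``by definition of the canonicalization''---a welcome extra detail, not a different argument.
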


\begin{proof}
Let $G=(\cO \sqcup \cL,D)$ be a graph and consider a tuple $(\Lambda, \Phi) \in \Theta_G$ in the domain of the parameterization $\tau_G$. It is enough to construct parameters $(\Lambda', \Phi') \in \Theta_{G_{\can}}$ such that $\tau_{G_{\can}}(\Lambda', \Phi') = \tau_G(\Lambda, \Phi)$. Define
\begin{equation}
\begin{aligned} \label{eq:can-parameters}
    \Lambda'_{\cO,\cO} &= \Lambda_{\cO,\cO} + \Lambda_{\cO,\cL}(I-\Lambda_{\cL,\cL})^{-1} \Lambda_{\cL,\cO}, \\
    \Lambda'_{\cL,\cO} &= (I-\Lambda_{\cL,\cL})^{-1} \Lambda_{\cL,\cO},
\end{aligned}
\end{equation}
and set $ \Lambda'_{\cL,\cL} = 0$, $ \lambda'_{\cO,\cL} = 0$ and $\Phi' = \Phi$. By definition of the canonicalization, we see that $\lambda'_{vw} = 0$ whenever $v \rightarrow w \not\in D_{\can}$. Moreover, since $\Lambda \in \mathbb{R}_{\reg}^D$, it holds that the matrices $I - \oLambda' = I - \Lambda'_{\cO,\cO} = I - \oLambda$ and $I -  \Lambda'_{\cL,\cL} = I$ are both invertible, and thus $\Lambda'  \in \mathbb{R}_{\reg}^{D_{\can}}$. Therefore, we have shown that $(\Lambda', \Phi') \in \Theta_{G_{\can}}$. To finish the proof, note that
\begin{align*}
    \Omega' &= {\Lambda'}_{\cL,\cO}^{\top}  (I -\Lambda'_{\cL,\cL})^{-\top} \Phi'_{\cL,\cL} (I -\Lambda'_{\cL,\cL})^{-1} \Lambda'_{\cL,\cO} + \Phi'_{\cO,\cO} \\
    &= \Lambda_{\cL,\cO}^{\top}  (I -\Lambda_{\cL,\cL})^{-\top} \Phi_{\cL,\cL} (I -\Lambda_{\cL,\cL})^{-1} \Lambda_{\cL,\cO} + \Phi_{\cO,\cO} = \Omega,
\end{align*}
which implies that
\[
    \tau_{G_{\can}}(\Lambda', \Phi') = (I - \oLambda')^{-\top} \Omega' (I - \oLambda')^{-1} =  (I - \oLambda)^{-\top} \Omega (I - \oLambda)^{-1} = \tau_G(\Lambda, \Phi).
\]
\end{proof}

It is crucial to note that the inclusion in Lemma~\ref{lem:can-submodel} may be strict, and even the dimensions of both models may be different. On the one hand, it may occur that the dimension of the model $\cM(G_{\can})$ is strictly larger than the dimension of the set of tuples $(\oLambda, \Omega)$, while the dimension of $\cM(G)$ and the dimension of the set of tuples  $(\oLambda, \Omega)$ coincide. In this case, $\cM(G_{\can})$ can not be rationally identifiable, while the ``original'' model $\cM(G)$ may be rationally identifiable.

\begin{figure}[t]    
    \centering
    \tikzset{
      every node/.style={circle, inner sep=0.3mm, minimum size=0.45cm, draw, thick, black, fill=white, text=black},
      every path/.style={thick}
    }
    \begin{tikzpicture}[align=center]
      \node[fill=lightgray] (h1) at (0,0) {$h_1$};
      \node[fill=lightgray] (h2) at (2.3,1.3) {$h_2$};
      \node[fill=lightgray] (h3) at (2.3,-1.3) {$h_3$};
      \node[] (1) at (-1,1) {$v_1$};
      \node[] (2) at (-1,-1) {$v_2$};
      \node[] (3) at (1,1) {$v_3$};
      \node[] (4) at (1,-1) {$v_4$};
      \node[] (5) at (2,0) {$v_5$};

      \draw[blue] [-latex] (1) edge (h1);
      \draw[blue] [-latex] (2) edge (h1);
      \draw[red, dashed] [-latex] (h1) edge (3);
      \draw[red, dashed] [-latex] (h1) edge (4);
      \draw[blue] [-latex] (3) edge (5);
      \draw[blue] [-latex] (4) edge (5);
      \draw[red, dashed] [-latex] (h2) edge (3);
      \draw[red, dashed] [-latex] (h2) edge (5);
      \draw[red, dashed] [-latex] (h3) edge (4);
      \draw[red, dashed] [-latex] (h3) edge (5);
    \end{tikzpicture}
    \qquad \qquad \qquad
    \begin{tikzpicture}[align=center]
      \node[fill=lightgray] (h1) at (1,0) {$h_1$};
      \node[fill=lightgray] (h2) at (2.3,1.3) {$h_2$};
      \node[fill=lightgray] (h3) at (2.3,-1.3) {$h_3$};
      \node[] (1) at (-1,1) {$v_1$};
      \node[] (2) at (-1,-1) {$v_2$};
      \node[] (3) at (1,1) {$v_3$};
      \node[] (4) at (1,-1) {$v_4$};
      \node[] (5) at (2,0) {$v_5$};
      
      \draw[blue] [-latex] (1) edge (3);
      \draw[blue] [-latex] (1) edge (4);
      \draw[blue] [-latex] (2) edge (3);
      \draw[blue] [-latex] (2) edge (4);
      \draw[red, dashed] [-latex] (h1) edge (3);
      \draw[red, dashed] [-latex] (h1) edge (4);
      \draw[blue] [-latex] (3) edge (5);
      \draw[blue] [-latex] (4) edge (5);
      \draw[red, dashed] [-latex] (h2) edge (3);
      \draw[red, dashed] [-latex] (h2) edge (5);
      \draw[red, dashed] [-latex] (h3) edge (4);
      \draw[red, dashed] [-latex] (h3) edge (5);
    \end{tikzpicture}

        \caption{Graph with its canonicalization. Right graph is rationally identifiable, while left graph is not.}
    \label{fig:example-canonical-2}  
\end{figure}
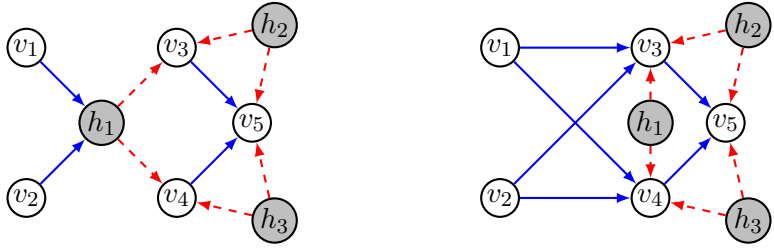

\begin{example}
    Consider the left graph in Figure~\ref{fig:example-canonical-1} (a). The semi-direct effects $v_1 \rightsquigarrow v_3$ and $v_1 \rightsquigarrow v_4$ are rationally identifiable by our new criterion given in Theorem~\ref{thm:LSC}. For example, for $v_1 \rightsquigarrow v_3$, the tuple $(Y,Z,H_1,H_2)=(\{v_1,v_2\},\{v_5\},\emptyset,\{h_2\})$ satisfies the LSC. On the other hand, the canonicalization is not rationally identifiable as we checked with techniques from computer algebra \citep{garcia2010identifying}. One way to see this is that the dimension of the set of matrices $\Omega=\Lambda_{\cL,\cO}^{\top} \Phi_{\cL,\cL} \Lambda_{\cL,\cO} + \Phi_{\cO,\cO}$ for $\Lambda \in \mathbb{R}^{D_{\can}}$ is $10$. The dimension of $\PD(4)$ is also equal to $10$, which implies that $\dim(\cM(G)) \leq 10$. Hence, the mapping $(\oLambda, \Omega) \mapsto (I-\oLambda)^{-\top} \Omega (I-\oLambda)^{-1}$ can not be one-to-one.

    Another example is the left graph in Figure~\ref{fig:example-canonical-1} (b). The effect $v_5 \rightsquigarrow v_6$ is identifiable by taking $(Y,Z,H_1,H_2)=(\{v_4,v_5\},\{v_3\}, \{h_3\}, \emptyset)$. But the canonicalization is not rationally identifiable since the dimension of the set of matrices $\Omega$ is $21$, which is already equal to the dimension of $\PD(6)$, the ambient space of $\cM(G_{\can})$.
\end{example}

Surprisingly, the other direction does also not hold. A canonical graph $G_{\can}$ being rationally identifiable does \emph{not} imply that all graphs $G$ with the same canonicalization $G_{\can}$ are rationally identifiable. Recall from Definition~\ref{def:rat-id} that rational identifiability usually only implies that we can recover $\oLambda'$ for all parameter choices $(\Lambda', \Phi') \in \Theta_{G_{\can}}$ that are not part of a lower dimensional subset $A \subset \Theta_{G_{\can}}$. However, the set of parameters we obtain from the ``original'' model via the formulas in~\eqref{eq:can-parameters}, may also only form a lower dimensional subset of $\Theta_{G_{\can}}$. If this subset is included in $A$, rational identifiability may fail. Figure~\ref{fig:example-canonical-2} provides an example  where $G_{\can}$ is rationally identifiable and $G$ is not.

Crucially, this example shows that it is not sufficient to study identifiability only for models given by canonical graphs in which all latent nodes are source nodes. If a canonical graph is rationally identifiable, identifiability may rest on the assumption that direct causal effects are not mediated by the same latent variables. 

\begin{table*}[t]\centering 
\begin{tabular}{@{}c|rrrr@{}}\toprule
edge prob. & $G$ & $G_{\can}$ & $G \setminus G_{\can}$ & $G_{\can} \setminus G$\\ 
\midrule
0.15 & 850 & 840 & 10 & 0 \\
0.20 & 734 & 725 & 12 & 3 \\
0.25 & 608 & 569 & 39 & 0 \\
0.30 & 442 & 409 & 34 & 1 \\
0.35 & 287 & 264 & 23 & 0 \\
0.40 & 180 & 164 & 17 & 1 \\
0.45 & 109 & 103 & 6 & 0 \\
\midrule
Total & 3210 & 3074 & 141 & 5 \\
\bottomrule
\end{tabular}
\caption{Counts of graphs and their canonicalizations that are certified to be LSC-identifiable by Algorithm~\ref{alg:check-LSC-id} with the bound $|H_1|+|H_2|\leq 3$. For each edge probability a total number of $1000$ graphs was randomly generated. The last two columns list counts of set differences, e.g, in  column ``$G \setminus G_{\can}$'' we count graphs that are LSC-identifiable but their canonicalization is not.}
\label{table:10-5-can}
\end{table*}

\begin{example}
Recall the results of our numerical experiments in Section~\ref{sec:numerical-experiments}. We also computed the canonicalization of each generated graph and checked whether it is LSC-identifiable using Algorithm~\ref{alg:check-LSC-id}. Table~\ref{table:10-5-can} compares the counts of LSC-identifiable graphs with the counts of their LSC-identifiable canonicalizations. As expected, we see in the last two columns that both cases occur: There are graphs that are LSC-identifiable but their canonicalization is not and vice versa.
\end{example}

\section{Trek Separation in Subgraphs} \label{sec:trek-separation}

In this section, we derive a graphical criterion to verify that the determinant of block matrices as in the left-hand side of~\eqref{eq:key-idea} is nonzero. Let $G=(V,D)$ be a directed graph and suppose that $G_1=(V,D_1)$ and $G_2=(V,D_2)$ are two subgraphs on the same set of nodes with $D_1 \subseteq D$ and $D_2 \subseteq D$. Let $\lambda=\{\lambda_{vw}: v,w \in V\}$ and $\phi=\{\phi_v: v \in V\}$ be two collections of indeterminates.
As before, we define $\Lambda$ to be sparse $|V| \times |V|$ matrices of unknowns, where the $vw$-th entry is given by $\lambda_{vw}$ if $v \rightarrow w \in D$, and zero otherwise. 
Similarly, define $\Lambda_1$ and $\Lambda_2$ to be the sparse $|V| \times |V|$ matrix of unknowns, where the $vw$-th entry of $\Lambda_1$ is given by $\lambda_{vw}$ if $v \rightarrow w$ in $D_1$, and zero otherwise, and the $vw$-th entry of $\Lambda_2$ is given by $\lambda_{vw}$ if $v \rightarrow w$ in $D_2$, and zero otherwise. We assume that $\det(I-\Lambda)$, $\det(I-\Lambda_1)$ and $\det(I-\Lambda_2)$ are not equal to zero. Note that the statement that a determinant is nonzero in this section means that the determinant is not the zero polynomial or power series. Finally, let $\Phi$ be a diagonal $|V| \times |V|$ matrix of unknowns, where the diagonal entry $\Phi_{vv}$ is given by the unknown $\phi_v$.

Now, consider four sets $A,B,C,D \subseteq V$ such that $A$ and $B$ are disjoint and $C$ and $D$ are disjoint and that $|A \cup B| = |C \cup D|$. Define the matrix 
\begin{align}
    M &= \begin{blockarray}{ccc}
    & C & D \\
    \begin{block}{c(cc)}
       A & (I-\Lambda_1)^{-\top} \Phi (I-\Lambda)^{-1} & (I-\Lambda_1)^{-\top} \Phi (I-\Lambda_2)^{-1} \\
       B & (I-\Lambda)^{-\top} \Phi (I-\Lambda)^{-1} & (I-\Lambda)^{-\top} \Phi (I-\Lambda_2)^{-1} \nonumber \\
    \end{block}
    \end{blockarray} \\
    &= \underbrace{\begin{pmatrix}
        [(I-\Lambda_1)^{-\top}]_{A,V} \\
        [(I-\Lambda)^{-\top}]_{B,V}
    \end{pmatrix}}_{=:(L_{V, A \cup B})^{\top}}
    \cdot \Phi \cdot 
    \underbrace{\begin{pmatrix}
        [(I-\Lambda)^{-1}]_{V,C} & [(I-\Lambda_2)^{-1}]_{V,D}
    \end{pmatrix}}_{=:R_{V, C \cup D}}. \label{eq:M-matrix}
\end{align}
We are interested in a graphical criterion of when the determinant of $M$ is nonzero. From the Cauchy-Binet determinant expansion formula, one directly obtains the following lemma. The exact proof is equivalent to the proof of Lemma 3.2 in \citet{sullivant2010trek}.

\begin{lemma} \label{lem:cauchy-binet}
    The determinant $\det(M)$ is identically zero if and only if for every set $S \subseteq V$ with $|S|=|A \cup B|=|C \cup D|$ either $\det(L_{S, A \cup B})=0$ or $\det(R_{S, C \cup D})=0$.
\end{lemma}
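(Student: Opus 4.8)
The plan is to expand $\det(M)$ by the Cauchy–Binet formula and then exploit the fact that $\Phi$ is diagonal with algebraically independent diagonal entries. Writing $k = |A \cup B| = |C \cup D|$ and regarding the factorization in~\eqref{eq:M-matrix} as a triple product of a $k \times |V|$ matrix $(L_{V,A\cup B})^{\top}$, a diagonal $|V| \times |V|$ matrix $\Phi$, and a $|V| \times k$ matrix $R_{V,C\cup D}$, the generalized Cauchy–Binet expansion yields
\[
    \det(M) = \sum_{S \subseteq V,\ |S| = k} \Big(\prod_{s \in S}\phi_s\Big)\, \det(L_{S,A\cup B})\, \det(R_{S,C\cup D}),
\]
where the sum ranges over all $k$-element subsets $S$ of $V$. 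The first step is to establish this identity: I would split $M = \big[(L_{V,A\cup B})^{\top}\Phi\big]\cdot R_{V,C\cup D}$ and apply Cauchy–Binet to this product of a $k\times|V|$ and a $|V|\times k$ matrix, observing that in $(L_{V,A\cup B})^{\top}\Phi$ column $s$ is exactly $\phi_s$ times column $s$ of $(L_{V,A\cup B})^{\top}$, so the scalar $\prod_{s\in S}\phi_s$ factors out of each $k\times k$ minor, and $\det\big((L_{S,A\cup B})^{\top}\big)=\det(L_{S,A\cup B})$.

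The second step is to argue that no cancellation can occur between distinct subsets $S$. Each factor $\det(L_{S,A\cup B})$ and $\det(R_{S,C\cup D})$ is a power series in the $\lambda$ indeterminates alone, while the diagonal entries $\phi_v$ are indeterminates algebraically independent of one another and of the $\lambda$ variables. Hence the squarefree monomials $\prod_{s\in S}\phi_s$ attached to distinct $k$-subsets $S$ are pairwise distinct and linearly independent over the ring of power series in $\lambda$. Consequently, $\det(M)$ is identically zero if and only if each coefficient $\det(L_{S,A\cup B})\,\det(R_{S,C\cup D})$ vanishes identically.

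The final step invokes that the ambient ring of power series in $\lambda$ (localized so that the relevant inverses exist, as guaranteed by the standing assumption that $\det(I-\Lambda)$, $\det(I-\Lambda_1)$, and $\det(I-\Lambda_2)$ are nonzero) is an integral domain. Therefore a product $\det(L_{S,A\cup B})\,\det(R_{S,C\cup D})$ is zero precisely when one of its two factors is zero. Combining this with the previous step gives exactly the claimed equivalence: $\det(M)\equiv 0$ if and only if for every $k$-subset $S$ either $\det(L_{S,A\cup B})=0$ or $\det(R_{S,C\cup D})=0$.

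The only real subtlety—rather than a genuine obstacle—is the bookkeeping about the ring in which the computation takes place: the entries of $L$ and $R$ are power series rather than polynomials, and I must confirm that the $\phi_v$ remain honest indeterminates after this localization so that the linear-independence argument of the second step applies. Once this is settled, the argument is identical to the proof of Lemma~3.2 in \citet{sullivant2010trek}; the sole difference is that here the left and right factors are built from three different coefficient matrices $\Lambda,\Lambda_1,\Lambda_2$ rather than from a single $\Lambda$. Since these enter only through the definitions of $L$ and $R$ and not through the combinatorics of the Cauchy–Binet expansion, the proof carries over \emph{verbatim}.
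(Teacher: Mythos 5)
Your proof is correct and follows exactly the route the paper takes: the paper gives no independent argument but defers to the proof of Lemma~3.2 in \citet{sullivant2010trek}, which is precisely your Cauchy--Binet expansion with the squarefree $\phi$-monomials preventing cancellation across subsets $S$, followed by the integral-domain argument splitting each product $\det(L_{S,A\cup B})\det(R_{S,C\cup D})$. Your added care about the ambient ring (rational functions/power series in $\lambda$ with the $\phi_v$ as independent indeterminates) is exactly the right bookkeeping and matches the cited argument.
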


Hence, to understand when $\det(M)$ is nonzero, we need to understand when $\det(L_{S, A \cup B})$ is nonzero. For a directed path $P$, we define the path monomial as $P(\lambda)=\prod_{v \rightarrow w \in P} \lambda_{vw}$, and for a system of paths $\mathbf{P}=\{P_1, \ldots, P_n\}$ we define the monomial $\bfP(\lambda)=\prod_{i=1}^n P_i$. The proof of the next lemma is given in Appendix~\ref{sec:other-proofs}.

\begin{lemma} \label{lem:equal-path-systems}
    Let $S \subseteq V$ be a set of nodes such that $|S|=|A \cup B|$. Let $\mathbf{P}$ be a system of directed paths in $G$ with no intersection from $S$ to $A \cup B$ such that no path contains a cycle. If for another system of directed paths $\Psi$ from $S$ to $A \cup B$ (possibly with intersection) the monomial $\mathbf{P}(\lambda)=\Psi(\lambda)$, then $\mathbf{P}=\Psi$.
\end{lemma}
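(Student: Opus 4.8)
The plan is to pass from the equality of path monomials to an equality of the underlying edge sets, and then to exploit the rigidity that the no-intersection, acyclic hypothesis forces on $\mathbf{P}$.

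First I would note that, because $\mathbf{P}$ has no intersection and no path of $\mathbf{P}$ contains a cycle, every edge of $G$ is traversed at most once across all of $\mathbf{P}$; hence $\mathbf{P}(\lambda)$ is a square-free monomial in the indeterminates $\lambda_{vw}$. Let $E$ be the set of edges whose indeterminate appears in $\mathbf{P}(\lambda)$. The assumption $\mathbf{P}(\lambda)=\Psi(\lambda)$ then forces $\Psi$ to use each edge of $E$ exactly once and no other edge: an edge outside $E$ would contribute an extra indeterminate, and an edge of $E$ used twice would push its exponent above one, either of which contradicts square-freeness.

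Next I would analyze the directed graph $H=(V,E)$. Since $\mathbf{P}$ is a vertex-disjoint union of simple paths, every vertex of $H$ has in-degree at most one and out-degree at most one, so $H$ is a disjoint union of simple directed paths. These maximal paths are exactly the non-trivial members of $\mathbf{P}$, because the source of each path of $\mathbf{P}$ has in-degree zero in $H$ and its sink has out-degree zero. The degree bounds make the decomposition of $E$ into directed paths rigid: from any vertex the outgoing edge, if it exists, is unique, so a directed path is determined once its starting vertex is fixed.

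The decisive step is to rule out any alternative partition of $E$ by $\Psi$. Both systems have source set $S$ and sink set $A\cup B$, with distinct sources and distinct sinks. If a path of $\Psi$ started or ended at an internal vertex $v$ of some maximal path of $H$, then $v$ would have to belong to $S$ (as a source of $\Psi$) or to $A\cup B$ (as a sink of $\Psi$); but an internal vertex has in-degree and out-degree both equal to one in $H$, so it is neither a source nor a sink of $\mathbf{P}$ and therefore lies in neither $S$ nor $A\cup B$, a contradiction. Hence each path of $\Psi$ must run from an in-degree-zero vertex to an out-degree-zero vertex of $H$ along the unique maximal path containing it, so the non-trivial paths of $\Psi$ coincide with those of $\mathbf{P}$. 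A short bookkeeping step then matches the remaining edgeless (trivial) paths by comparing the leftover elements of the source set $S$ and the sink set $A\cup B$, yielding $\Psi=\mathbf{P}$. The main obstacle I anticipate is making this rigidity argument precise when trivial single-vertex paths are present and when $S\cap(A\cup B)\neq\emptyset$, so that a node may legitimately be both a source and a sink; the degree characterization of the admissible endpoints of $H$ is exactly what settles these cases.
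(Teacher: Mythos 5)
Your proof is correct, and it rests on exactly the same rigidity mechanism as the paper's: the monomial equality pins down the edge set available to $\Psi$, while the no-intersection and acyclicity hypotheses force every vertex to have in- and out-degree at most one on those edges, with sources confined to $S$ and sinks to $A\cup B$, so $\Psi$ has no choice but to retrace $\mathbf{P}$. The only difference is presentational: the paper establishes this by an edge-by-edge induction along each path $P_i$ (first edge forced, then each successive edge forced), whereas you package the same facts globally via the union graph $H$, its decomposition into maximal directed paths, and a final counting step for the trivial paths---both handle the delicate cases (trivial paths, $S\cap(A\cup B)\neq\emptyset$) by the same endpoint-degree observations.
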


We obtain a sufficient condition that certifies when $\det(L_{S, A \cup B})$ is nonzero; it is also proved in Appendix~\ref{sec:other-proofs}.

\begin{lemma} \label{lem:det-path-nonzero}
     Let $S \subseteq V$ be a set of nodes such that $|S|=|A \cup B|$. If there is a system of directed paths with no intersection from $S$ to $A \cup B$ such that every path ending in $A$ only has edges in $G_1$, then $\det(L_{S, A \cup B})\neq0$.
\end{lemma}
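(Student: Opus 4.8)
The plan is to expand $\det(L_{S, A \cup B})$ combinatorially and exhibit a single monomial that survives without cancellation. First I would unpack the entries of $L_{S, A\cup B}$. Reading off the definition in~\eqref{eq:M-matrix}, for $s \in S$ and $a \in A$ the entry equals $[(I-\Lambda_1)^{-1}]_{s,a}$, while for $b \in B$ it equals $[(I-\Lambda)^{-1}]_{s,b}$. Expanding the inverses as the power series $(I-\Lambda_1)^{-1} = \sum_{k \geq 0} \Lambda_1^k$ and $(I-\Lambda)^{-1} = \sum_{k\geq 0}\Lambda^k$, the $(s,a)$ entry becomes the sum of path monomials $P(\lambda)$ over all directed paths $P$ from $s$ to $a$ using only edges of $G_1$, and the $(s,b)$ entry the analogous sum over directed paths from $s$ to $b$ in $G$.

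Next I would apply the Leibniz formula to $\det(L_{S, A\cup B})$ and multiply out the path expansions of the individual entries. This realizes the determinant as a signed sum
\[
\det(L_{S,A\cup B}) = \sum_{\Psi} \sign(\sigma_\Psi)\, \Psi(\lambda),
\]
ranging over all systems $\Psi$ of directed paths from $S$ to $A\cup B$ in which every path ending in $A$ uses only edges of $G_1$, where $\sigma_\Psi$ is the bijection $S \to A\cup B$ induced by $\Psi$. The task then reduces to producing one monomial whose total coefficient is nonzero.

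Now I would invoke the hypothesis. Let $\mathbf{P}$ be the given no-intersection system from $S$ to $A\cup B$ respecting the edge restriction. Deleting a cycle from an individual path preserves its source, sink and $G_1$-membership and only removes nodes, so it cannot create a shared node; hence I may assume without loss of generality that no path of $\mathbf{P}$ contains a cycle. Consider the monomial $\mathbf{P}(\lambda)$. By Lemma~\ref{lem:equal-path-systems}, $\mathbf{P}$ is the \emph{unique} system of directed paths from $S$ to $A\cup B$ whose monomial equals $\mathbf{P}(\lambda)$; in particular, every term $\Psi$ in the expansion above with $\Psi(\lambda)=\mathbf{P}(\lambda)$ satisfies $\Psi = \mathbf{P}$, so these terms all carry the same induced bijection and hence the same sign. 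Therefore the coefficient of $\mathbf{P}(\lambda)$ equals $\sign(\sigma_{\mathbf{P}}) = \pm 1 \neq 0$, and $\det(L_{S,A\cup B})$ is not the zero polynomial.

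The only genuine subtlety — and the step I would guard most carefully — is the no-cancellation argument in the last paragraph: it rests entirely on the uniqueness statement of Lemma~\ref{lem:equal-path-systems}, which in turn requires the witnessing system to be cycle-free. I would therefore make the reduction to cycle-free paths explicit and verify that it respects both the no-intersection property and the $G_1$-restriction on paths ending in $A$, so that the reduced system remains a valid witness and still indexes a legitimate term of the determinant expansion.
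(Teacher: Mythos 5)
Your proposal is correct and follows essentially the same route as the paper's own proof: expand the entries of $L_{S,A\cup B}$ as sums of path monomials, write the determinant as a signed sum over path systems in which paths ending in $A$ stay in $G_1$, reduce the witnessing no-intersection system to a cycle-free one, and invoke Lemma~\ref{lem:equal-path-systems} to conclude that its monomial appears with coefficient $\pm 1$ and cannot cancel. The care you take with the cycle-removal step (preserving no-intersection and the $G_1$-restriction) matches the paper's argument exactly.
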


Combining Lemma~\ref{lem:det-path-nonzero} with Lemma~\ref{lem:cauchy-binet} allows us to present a sufficient condition for when the determinant of $M$ is nonzero.

\begin{theorem} \label{thm:generalized-trek-separation}
If there is a system of treks with no sided intersection from $A \cup B$ to $C \cup D$ such that 
\begin{itemize}
    \item[(i)] the left part of every trek starting in $A$ only takes edges in $G_1$, and
    \item[(ii)] the right part of every trek ending in $D$ only takes edges in $G_2$, 
\end{itemize}
then $\det(M) \neq 0$.
\end{theorem}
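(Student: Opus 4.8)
The plan is to apply the Cauchy--Binet criterion of Lemma~\ref{lem:cauchy-binet} in its contrapositive form: $\det(M)$ is not identically zero precisely when there exists a single index set $S \subseteq V$ with $|S| = |A \cup B| = |C \cup D|$ for which \emph{both} $\det(L_{S, A\cup B}) \neq 0$ and $\det(R_{S, C\cup D}) \neq 0$. So the whole argument reduces to exhibiting one good witness set $S$, extracted from the hypothesized trek system.

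First I would set $S = \text{top}(\Pi)$, the set of top nodes of the treks in the given system $\Pi = \{\pi_1, \ldots, \pi_n\}$, where $n = |A \cup B|$. By definition, $\Pi$ decomposes into a left system $\mathbf{P}_{A \cup B}$ of directed paths from $S$ to $A \cup B$ and a right system $\mathbf{P}_{C \cup D}$ of directed paths from $S$ to $C \cup D$. Since each of these is a \emph{system of directed paths}, its sources are pairwise distinct; hence the top nodes are distinct and $|S| = n$. Moreover, the hypothesis that $\Pi$ has no sided intersection means exactly that $\mathbf{P}_{A \cup B}$ has no intersection and $\mathbf{P}_{C \cup D}$ has no intersection.

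Next I would treat the left block. The system $\mathbf{P}_{A \cup B}$ is a non-intersecting directed-path system from $S$ to $A \cup B$; its path ending at a node $a_i \in A$ is the left part of the trek $\pi_i$ starting in $A$, which by hypothesis~(i) uses only edges of $G_1$. Thus every path of $\mathbf{P}_{A \cup B}$ ending in $A$ lies in $G_1$, and Lemma~\ref{lem:det-path-nonzero} yields $\det(L_{S, A\cup B}) \neq 0$. The right block is symmetric. Reading off the block structure in~\eqref{eq:M-matrix} gives $R_{S, C\cup D} = ( [(I-\Lambda)^{-1}]_{S,C} \;\; [(I-\Lambda_2)^{-1}]_{S,D} )$, which has exactly the same shape as an $L$-type matrix, now with the roles $(A, B, G_1)$ replaced by $(D, C, G_2)$ and the two column blocks interchanged (this alters the determinant only by a sign). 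The right system $\mathbf{P}_{C \cup D}$ supplies the required non-intersecting directed-path system from $S$ to $C \cup D$, and hypothesis~(ii) guarantees that every path ending in $D$ uses only edges of $G_2$. Applying Lemma~\ref{lem:det-path-nonzero} in this relabeled form gives $\det(R_{S, C\cup D}) \neq 0$.

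With $\det(L_{S, A\cup B}) \neq 0$ and $\det(R_{S, C\cup D}) \neq 0$ simultaneously for this $S$, the contrapositive of Lemma~\ref{lem:cauchy-binet} immediately delivers $\det(M) \neq 0$. I do not expect a serious obstacle: the substantive content has already been isolated into Lemmas~\ref{lem:cauchy-binet} and~\ref{lem:det-path-nonzero}, and the only point demanding care is confirming that $R$ genuinely matches the $L$-template so that Lemma~\ref{lem:det-path-nonzero} can be invoked symmetrically---this is a matter of inspecting the factorization~\eqref{eq:M-matrix} rather than introducing any new idea.
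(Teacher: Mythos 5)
Your proposal is correct and follows essentially the same route as the paper's proof: take $S = \text{top}(\Pi)$, use no sided intersection to get non-intersecting left and right path systems, apply Lemma~\ref{lem:det-path-nonzero} to each block, and conclude via Lemma~\ref{lem:cauchy-binet}. Your extra remark that $R_{S,C\cup D}$ matches the $L$-template only after swapping the roles $(A,B,G_1)\leftrightarrow(D,C,G_2)$ and permuting column blocks (a sign change) is a point the paper glosses over, and it is a welcome bit of added care rather than a deviation.
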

\begin{proof}
Let $\Pi$ be  a system of treks with no sided intersection from $A \cup B$ to $C \cup D$ such that Conditions (i) and (ii) are satisfied. Since the system has no sided intersection, the left system $\bfP_{A \cup B}$ from $\text{top}(\Pi)$ to $A \cup B$ and the right system $\bfP_{C \cup D}$ from $\text{top}(\Pi)$ to $C \cup D$ both have no intersection. Moreover, every path in the system $\bfP_{A \cup B}$ ending in $A$ only takes edges in $G_1$, and every path in the system $\bfP_{C \cup D}$ ending in $D$ only takes edges in $G_2$. We obtain by Lemma~\ref{lem:det-path-nonzero} that both $\det(L_{\text{top}(\Pi), A \cup B})$ and $\det(R_{\text{top}(\Pi), C \cup D})$ are nonzero. Thus, we conclude by Lemma~\ref{lem:cauchy-binet} that $\det(M)$ must also be nonzero.
\end{proof}

Theorem~\ref{thm:generalized-trek-separation} is the main technical tool for proving Theorem~\ref{thm:LSC}. However, it is only a sufficient condition for invertibility of matrices of the form~\eqref{eq:M-matrix}. The reverse direction of Lemma~\ref{lem:det-path-nonzero} and hence also the reverse direction of Theorem~\ref{thm:generalized-trek-separation} does not hold, as we we show in the next example.

\begin{example} \label{ex:counterexample-trek-separation}
Take $G$ to be the path $1 \rightarrow 2 \rightarrow 3 \rightarrow 4$ and consider the subgraph $G_1$ such that the only edge is given by $2\rightarrow 3$. Let $A=\{3\}, B=\{4\}$ and $ S=\{1,2\}$. Then,
\[
    L_{S,A \cup B} = 
    \begin{blockarray}{ccc}
    & 3 & 4 \\
    \begin{block}{c(cc)}
       1 & 0 & \lambda_{12} \lambda_{23} \lambda_{34} \\
       2 & \lambda_{23} & \lambda_{23}\lambda_{34} \\
    \end{block}
    \end{blockarray}.
\]
Clearly, $\det(L_{S,A \cup B}) \neq 0$ even though there is no system of of directed paths with no intersection from $S$ to $A \cup B$ such that every path ending in $A$ only has edges in $G_1$.
\end{example}

\section{Discussion}
In this work, we proposed a graphical criterion that provides a sufficient condition for rational identifiability of semi-direct effects. The criterion operates on models with arbitrary latent structure. Crucially, identifiability of semi-direct effects allows to transform the model into a simpler measurement model. Then, it might become possible to apply existing identification rules to identify causal effects even between latent variables; recall Example~\ref{ex:identifiying-latent-effects}.

Our work opens up some natural questions for further studies. Since the latent-subgraph criterion only provides a sufficient condition for rational identifiability, it is desirable to also obtain a powerful necessary condition in form of a graphical criterion. One approach is to compare the dimension of the image of the parametrization $\tau_G$ defined in~\eqref{eq:parametrization}, with the image of the map $\phi_G$ that maps the parameters $(\Lambda, \Phi)$ to the pair of matrices $(\oLambda, \Omega)$. If $\im(\tau_G) < \im(\phi_G)$, rational identifiability can not hold. Studying the images of the maps $\tau_G$ and $\phi_G$ amounts to studying the maximal rank of the corresponding Jacobian matrices.

Our main technical tool is an extension of the concept of trek separation~\citep{sullivant2010trek}. It can be used to certify that determinants of matrices with entries from different blocks, corresponding to treks in separate subgraphs, are generically nonzero. However, as we have seen in Example~\ref{ex:counterexample-trek-separation}, the condition in Theorem~\ref{thm:generalized-trek-separation} is not an ``if and only if'' condition. Finding such a condition is of potential interest in applications beyond determining rational identifiability. As shown by \citet{drton2020nested}, a characterization of the vanishing of the determinants of trek matrices involving subgraphs may lead to the discovery of novel, non-determinantal constraints that hold on every covariance matrix in the model. Such constraints are, for example, interesting for model equivalence and constraint-based statistical inference.

Finally, the integer linear program we set up in Section~\ref{sec:integer-linear-program} is a generalization of the usual maximum flow problem: If a flow ends at a certain node, then the flow is allowed to only take edges in a subgraph. We believe that solving Conjecture~\ref{conj:integer-solution}, whether the program can be solved in polynomial time, is of independent interest.

\section*{Acknowledgments}
This project has received funding from the European Research Council (ERC) under the European Union’s Horizon 2020 research and innovation programme (grant agreement No 883818) and from the German Federal Ministry of Education and Research and the Bavarian State Ministry for Science and the Arts.

\newpage
\begin{appendix}
\begin{center}
{ \LARGE\bf \noindent Appendix} 
\end{center}

\section{Proof of Main Result} \label{sec:proof-main-result}
\begin{proof}[Proof of Theorem~\ref{thm:LSC}]
Let $Y = \{y_1,\dots,y_{n+r}\}$ and $Z = \{z_1,\dots,z_r\}$ be as in the statement of the theorem and denote $\opa(v) = \{p_1,\dots,p_n\}$. We define two matrices $A\in\bR^{(n+r)\times n},B\in\bR^{(n+r)\times r}$ and a vector $c\in\bR^{n+r}$ as follows:
\[A_{ij} = \begin{cases}
\left[(I - \oLambda)^\top\Sigma\right]_{y_ip_j},&\text{ if }y_i\in\elr_{H_2,H_1}(Z\cup \{v\}),\\
\Sigma_{y_ip_j},&\text{ if }y_i\not\in\elr_{H_2,H_1}(Z\cup\{v\}),
\end{cases}\]
and
\[B_{ij} = \begin{cases}
\left[(I - \oLambda)^\top\Sigma(I - \oLambda)\right]_{y_iz_j},&\text{ if }y_i\in\elr_{H_2,H_1}(Z\cup \{v\}),\\
\left[\Sigma(I - \oLambda)\right]_{y_iz_j},&\text{ if }y_i\not\in\elr_{H_2,H_1}(Z\cup\{v\}),
\end{cases}\]
and
\[c_i = \begin{cases}
\left[(I - \oLambda)^\top\Sigma\right]_{y_iv},&\text{ if }y_i\in\elr_{H_2,H_1}(Z\cup\{v\}),\\
\Sigma_{y_iv},&\text{ if }y_i\not\in\elr_{H_2,H_1}(Z\cup\{v\}).
\end{cases}\]

We divide the proof of the theorem into $5$ separate steps.
\medskip

\textbf{Claim $1$.} The matrices $A$ and $B$ and the vector $c$ are rationally identifiable.  \\[0.1cm]
By assumption, all semi-direct effects  $u \rightsquigarrow w$ into a node $w \in Z\cup (Y\cap \elr_{H_2,H_1}(Z\cup\{v\}))$ are rationally identifiable.  Since only entries $\olambda_{uw}$ with $w \in Z\cup (Y\cap \elr_{H_2,H_1}(Z\cup\{v\}))$ appear in the definition of $A$, $B$ and $c$ we conclude that $A$, $B$ and $c$ are rationally identifiable (i.e.~rational functions of $\Sigma$).
\medskip

\textbf{Claim $2$.} There exists a subset $Y_Z \subseteq Y$ with $|Y_Z|=|Z|$ such that  $\det(\Omega_{Y_Z, Z}) \neq 0$ generically.  
\medskip

By assumption, there is a subset $Y_Z \subseteq Y$ such that there is a system of treks with no sided intersection from $Y_Z$ to $Z$ in the subgraph $G_{\lat}$. Since the system has no sided intersection, it follows from Proposition 3.4 in \citet{sullivant2010trek} that, generically, $\det(\Omega_{Y_Z, Z}) \neq 0$. 
\medskip

\textbf{Claim $3$.} Let $X \subseteq V\setminus (Z \cup \{v\})$ be the largest possible set such that $(H_1,H_2)$ trek separates $X$ and $Z \cup \{v\}$ in the subgraph~$G_{\lat}$. Then, generically, there exists a vector $\psi \in \mathbb{R}^r$ such that $\Omega_{X,Z} \cdot  \psi = \Omega_{X,v}$. 
\medskip

Recall that 
\begin{align*}
\Omega_{X,Z \cup \{v\}} &= \Phi_{X,Z \cup \{v\}} + \Lambda_{\cL,X}^{\top} (I -\Lambda_{\cL,\cL})^{-\top} \Phi_{\cL,\cL} (I -\Lambda_{\cL,\cL})^{-1} \Lambda_{\cL,Z \cup \{v\}} 
\end{align*}
Since $\Phi$ is diagonal and $X \cap (Z \cup \{v\}) = \emptyset$, we have that $\Phi_{X,Z \cup \{v\}} = 0$. Because we assumed that $(H_1,H_2)$ trek separates $X$ and $Z \cup \{v\}$ in the subgraph~$G_{\lat}$, trek separation \citep[Theorem 2.8]{sullivant2010trek} yields 
\[
\rk(\Omega_{X,Z \cup \{v\}}) \leq |H_1|+|H_2|=|Z|.
\]
On the other hand, $\Omega_{X,Z}$ generically has full column rank $r$ by Claim $2$ because $Y_Z \subseteq X$ and thus $\Omega_{Y_Z, Z}$ is a submatrix with full rank. This proves that, generically, there exists $\psi \in \mathbb{R}^r$ such that $\Omega_{X,Z}  \cdot \psi = \Omega_{X,v}$.
\medskip

\textbf{Claim $4$.} With the same $\psi\in\bR^r$ as in Claim $3$ we have that
\begin{align*}
  \begin{pmatrix} A & B \end{pmatrix}\cdot \begin{pmatrix} \oLambda_{\opa(v),v} \\ \psi\end{pmatrix} = c\,. 
\end{align*}

\noindent
We prove Claim 4 by considering each row indexed by $i \in \{1, \ldots, n+r\}$ separately. In particular, we split the proof into two parts depending on whether $y_i \in \elr_{H_2,H_1}(Z\cup \{v\})$ or $y_i \not\in \elr_{H_2,H_1}(Z\cup \{v\})$. First, consider any index $i$ such that $y_i \in \elr_{H_2,H_1}(Z\cup \{v\})$.  Then
\begin{align}
\nonumber
&\left[\begin{pmatrix} A & B \end{pmatrix}\cdot \begin{pmatrix} \oLambda_{\opa(v),v} \\ \psi\end{pmatrix} \right]_i\\
\nonumber
&=\left[(I - \oLambda)^\top\Sigma\right]_{y_i,\opa(v)}\cdot \oLambda_{\opa(v),v} + \left[(I - \oLambda)^\top\Sigma(I - \oLambda)\right]_{y_i,Z}\cdot \psi\\
\label{eq:id-equation}
&=\left[(I - \oLambda)^\top\Sigma\cdot \oLambda\right]_{y_iv} + \left[\O_{Y,Z}\cdot \psi\right]_i
\end{align}
because $\oLambda_{wv}=0$ unless $w\in\opa(v)$ and $ (I - \oLambda)^\top\Sigma(I - \oLambda)=\O $. Note that the set $Y$ is a subset of $X$ defined in Claim $3$ and therefore, with the same $\psi \in \mathbb{R}^r$, we have that $\Omega_{Y,Z} \cdot \psi = \Omega_{Y,v}$. It follows that
\[
\left[\O_{Y,Z}\cdot \psi\right]_i = \left[\O_{Y,v}\right]_i = \O_{y_iv}.
\]
Thus, we can rewrite equation \eqref{eq:id-equation} as
\begin{align*}
\left[\begin{pmatrix} A & B \end{pmatrix}\cdot \begin{pmatrix} \oLambda_{\opa(v),v} \\ \psi\end{pmatrix} \right]_i
&=\left[(I - \oLambda)^\top\Sigma\right]_{y_iv} - \left[(I - \oLambda)^\top\Sigma (I - \oLambda)\right]_{y_iv} + \O_{y_iv}\\
&=\left[(I - \oLambda)^\top\Sigma\right]_{y_iv} - \O_{y_iv} + \O_{y_iv}\\
&=c_i, 
\end{align*}
by the definition of $c$. Next, consider any index $i$ such that $y_i\not\in\elr_{H_2,H_1}(Z\cup\{v\})$. Then, for all $w\in Z\cup\{v\}$,  any trek from a node $w$ to $y_i$  must be of the form
\begin{equation} \label{eq:intersecting-trek}
w \leftarrow h \leftarrow \cdots \rightarrow h' \rightarrow u \rightarrow x_1 \rightarrow \dots \rightarrow x_m \rightarrow y_i  
\end{equation}
such that $w \leftarrow h \leftarrow \cdots \rightarrow h' \rightarrow u$ is a latent trek where either the left part contains a node in $H_2$ or the right part contains a node in $H_1$. We observe that each $u$ appearing in a trek as in \eqref{eq:intersecting-trek} is an element of $X$ (where $X$ is defined in Claim $3$). This can be seen as follows.  We have $u \not\in Z \cup \{v\}$ since, otherwise, there would exist the trek $u \rightarrow x_1 \rightarrow \dots \rightarrow x_m \rightarrow y_i$ which contradicts $y_i\not\in\elr_{H_2,H_1}(Z\cup\{v\})$. Now, assume that there exists a latent trek $w \leftarrow h \leftarrow \cdots \rightarrow h' \rightarrow u $ such that the left part of the trek does not intersect with $H_2$ and the right part of the trek does not intersect with $H_1$. But then $y_i$ is extended latent reachable from $Z \cup \{v\}$ by avoiding $(H_2,H_1)$, which again contradicts $y_i\not\in\elr_{H_2,H_1}(Z\cup\{v\})$.  We conclude that the pair $(H_1,H_2)$ trek separates $u$ and $Z\cup\{v\}$, which is equivalent to saying that $u \in X$.

On the other hand, for observed nodes $v,w \in V$ we have
\[
    \left[\Omega (I-\oLambda)^{-1}\right]_{v w}=\sum_{P \in \text{LST}_G(v,w)}\prod_{x \rightarrow y \in P} \lambda_{xy},
\]
where $\text{LST}_G(v,w)$ is the set of ``left subgraph treks'' from $v$ to $w$ in $G$, i.e., the set of treks such that the left part is a trek in $G_{\lat}$. For all $w \in Z \cup \{v\}$, it follows that  
\begin{equation} \label{eq:not_htr}
\left[\Omega (I-\oLambda)^{-1}\right]_{w y_i} = \Omega_{w,X} \left[(I-\oLambda)^{-1}\right]_{X,y_i}.
\end{equation}
Now, 
\begin{align}
\nonumber
\left[\begin{pmatrix} A & B \end{pmatrix}\cdot \begin{pmatrix} \oLambda_{\opa(v),v} \\ \psi\end{pmatrix} \right]_i
&=\Sigma_{y_i,\opa(v)}\cdot \oLambda_{\opa(v),v} + \left[\Sigma(I - \oLambda)\right]_{y_i,Z}\cdot \psi\\
\nonumber
&=\left[\Sigma \oLambda\right]_{y_i v} + \left[\Sigma(I - \oLambda)\right]_{y_i,Z}\cdot \psi \\ \label{eq:claim2_3}
&=\Sigma_{y_i v} - \left[\Sigma (I - \oLambda)\right]_{y_i v} + \left[\Sigma(I - \oLambda)\right]_{y_i,Z}\cdot \psi.
\end{align}
Using $\O = (I - \oLambda)^\top\Sigma(I - \oLambda)$ and applying \eqref{eq:not_htr} and Claim $3$, we have
\begin{align*}
\left[\Sigma(I - \oLambda)\right]_{y_i,Z}\cdot \psi &= 
\left[(I - \oLambda)^{-\top}\O\right]_{y_i,Z}\cdot \psi = \left[(I - \oLambda)^{-\top}\right]_{y_i,X} \O_{X,Z} \cdot \psi\\
&= \left[(I - \oLambda)^{-\top}\right]_{y_i,X} \O_{X,v}  = \left[(I - \oLambda)^{-\top} \O\right]_{y_i v} \\
&= \left[\Sigma(I - \oLambda)\right]_{y_i v}.
\end{align*}
According to equation \eqref{eq:claim2_3} and recalling the definition of $c$, we conclude that
\begin{align*}
\left[\begin{pmatrix} A & B \end{pmatrix}\cdot \begin{pmatrix} \L_{\pa_V(v),v} \\ \psi\end{pmatrix} \right]_i
&=\Sigma_{y_i v} = c_i.
\end{align*}

The theorem is now proven if the determinant of the matrix $\begin{pmatrix} A & B \end{pmatrix}$ is non-zero for generic parameter choices. In this case there exists a rational inverse of $\begin{pmatrix} A & B \end{pmatrix}$ and, generically, the equation system exhibited in Claim $4$ has a unique solution. This is addressed by our last claim:
\medskip

\textbf{Claim $5$.} The determinant of the matrix
$\begin{pmatrix} A & B \end{pmatrix}$
is generically non-zero.
\medskip

\noindent 
Denote $Y_1 = Y \cap \elr_{H_2,H_1}(Z\cup \{v\})$ and $Y_2=Y \setminus Y_1$. Then
\begin{align*}
    \begin{pmatrix} A & B \end{pmatrix}
    &= 
    \begin{blockarray}{ccc}
    & \opa(v) & Z \\
    \begin{block}{c(cc)}
       Y_1 & (I-\oLambda)^{\top} \Sigma & (I-\oLambda)^{\top} \Sigma (I-\oLambda) \\
       Y_2 & \Sigma & \Sigma (I-\oLambda) \\
    \end{block}
    \end{blockarray}    
\end{align*}
Recall that $\Lambda$ is given by
\[
    \Lambda = \begin{pmatrix}
         \Lambda_{\cO,\cO} & \Lambda_{\cO,\cL} \\
        \Lambda_{\cL,\cO} & \Lambda_{\cL,\cL}
    \end{pmatrix},
\]
and define the matrix
\[
    \Lambda_{\lat} = \begin{pmatrix}
        0 & 0 \\
        \Lambda_{\cL,\cO} & \Lambda_{\cL,\cL}
    \end{pmatrix}.
\]
Since we assumed that both $\det(I-\oLambda)$ and $\det(I-\Lambda_{\cL,\cL})$ are nonzero, we have that $\det(I-\Lambda) = \det(I-\oLambda) \cdot \det(I-\Lambda_{\cL,\cL})$ and $\det(I-\Lambda_{\lat}) = \det(I - \Lambda_{\cL,\cL})$ are nonzero. We make the following observations.
\begin{itemize}
    \item[(i)] $[(I - \Lambda)^{-1}]_{\cO,\cO} = (I - \oLambda)^{-1}$ and $[(I - \Lambda_{\lat})^{-1}]_{\cO,\cO} = I$,
    \item[(ii)] $[(I - \Lambda)^{-1}]_{\cL,\cO} = (I-\Lambda_{\cL,\cL})^{-1} \Lambda_{\cL,\cO} (I - \oLambda)^{-1}$ and $[(I - \Lambda_{\lat})^{-1}]_{\cL,\cO} =  (I-\Lambda_{\cL,\cL})^{-1} \Lambda_{\cL,\cO}$.
\end{itemize}
Observations (i) and (ii) imply that
\begin{align*}
(I-\oLambda)^{\top} \Sigma 
&= \Omega (I-\oLambda)^{-1} \\
&= \Lambda_{\cL,\cO}^{\top}  (I -\Lambda_{\cL,\cL})^{-\top} \Phi_{\cL,\cL} (I -\Lambda_{\cL,\cL})^{-1} \Lambda_{\cL,\cO} (I-\oLambda)^{-1} + \Phi_{\cO,\cO} (I-\oLambda)^{-1} \\
&= [(I - \Lambda_{\lat})^{-\top}]_{\cO,\cL} \Phi_{\cL,\cL} [(I - \Lambda)^{-1}]_{\cL,\cO} + [(I - \Lambda_{\lat})^{-\top}]_{\cO,\cO}\Phi_{\cO,\cO} [(I - \Lambda)^{-1}]_{\cO,\cO} \\
&= [(I - \Lambda_{\lat})^{-\top} \Phi (I - \Lambda)^{-1}]_{\cO,\cO}
\end{align*}
With similar calculations, it is easy to see that
\[
    \begin{pmatrix} A & B \end{pmatrix}
    = 
    \begin{blockarray}{ccc}
    & \opa(v) & Z \\
    \begin{block}{c(cc)}
       Y_1 & (I - \Lambda_{\lat})^{-\top} \Phi (I - \Lambda)^{-1} & (I - \Lambda_{\lat})^{-\top} \Phi (I - \Lambda_{\lat})^{-1} \\
       Y_2 & (I - \Lambda)^{-\top} \Phi (I - \Lambda)^{-1} & (I - \Lambda)^{-\top} \Phi (I - \Lambda_{\lat})^{-1} \\
    \end{block}
    \end{blockarray}.
\]
Now, observe that the matrix $\Lambda_{\lat}$ has sparsity pattern according to the graph $G_{\lat}$. That is, the $vw$-th entry of $\Lambda_{\lat}$ is given by $\lambda_{vw}$ if $v \rightarrow w$ in $D_{\lat}$, and zero otherwise. Therefore, it follows from  Theorem~\ref{thm:generalized-trek-separation} that the determinant of $\begin{pmatrix} A & B \end{pmatrix}$ is generically nonzero by using property (iii) in the definition of the latent-subgraph criterion.
\end{proof}

\section{Other Proofs} \label{sec:other-proofs}

\begin{proof}[Proof of Theorem~\ref{thm:linear-program}]
Consider $Y \subseteq Y_a$ and suppose that there is a system of directed paths $\Pi=\{\pi_1, \ldots, \pi_n\}$ with no intersection from $Y$ to $Z \cup P$ such that every path ending in $Z$ only has edges in $G_1$. W.l.o.g.~we assume that $\Pi$ has no cycles since, after removing all cycles from $\Pi$, we still have a system of directed paths $\Pi=\{\pi_1, \ldots, \pi_n\}$ with no intersection from $Y$ to $Z \cup P$ such that every path ending in $Z$ only has edges in $G_1$.

Now, we construct integer-valued vectors $(f,f^1)$ that maximize  $\text{Lp}(G,G_1,Z,P,Y_a)$.  Initialize $f_{uv}=f^1_{uv}=0$ for all $u \to v \in D_f$. For each path $\pi_k \in \Pi$, we set the corresponding values of $f$ or $f^1$ of the edges along the path to $1$. More precisely, suppose that the path $\pi_k$  starts at a node $y_k \in Y$ and terminates at a node $z_k \in Z$. For each edge $u \to v$ on the path $\pi_k$, we set $f^1_{uv}=1$, and, moreover, we set $f^1_{s y_k}=f^1_{z_k t}=1$. On the other hand, when the path $\pi_k$  starts at $y_k \in Y$ and terminates at $p_k \in P$, then, for each edge $u \to v$ on $\pi_k$, we set $f_{uv}=1$, and we set $f_{s y_k}=f_{p_k t}=1$. Clearly, it holds that $\sum_{z \in Z} f^1_{zt} + \sum_{p \in P} f_{pt} = |Z|+|P|$, and it remains to check that all constraints are satisfied. Note that (i) and (iii) are trivially satisfied by construction. For the constraints (ii) and (iv), we recall that the system of directed paths $\Pi$ has no intersection. It follows that each node appears at most once in the system. Hence, the constraints are satisfied.

Now, suppose $f^1$ and $f$ are integer-valued, $\sum_{z \in Z} f^1_{zt} + \sum_{p \in P} f_{pt} = |Z|+|P|$, and that the constraints (i)-(iv) are satisfied. Observe that the values of $f^1$ and $f$ must be in $\{0,1\}$.  Moreover, by constraint (ii), we can decompose the set of edges that have $f^1_{uv}=1$ or $f_{uv}=1$  into $n:=|Z|+|P|$ directed paths $\widetilde{\Pi}=\{\widetilde{\pi}_1, \ldots, \widetilde{\pi}_n \}$ from $s$ to $t$ with either $f^1_{uv}=1$ or $f_{uv}=1$ for all edges $u \to v$ along each path. By construction of $G_{\flow}$, each path is of the form
\[
    \widetilde{\pi}_k: s \rightarrow y_k \rightarrow \cdots \rightarrow w_k \rightarrow t
\]
with $y_k \in S$ and $w_k \in Z \cup P$. This defines the path system $\Pi=\{\pi_1, \ldots, \pi_n\}$ from $Y$ to $Z \cup P$, where each path $\pi_k$ is obtained from $\widetilde{\pi}_k$ by removing the edge $s \rightarrow y_k$ and the edge $w_k \rightarrow t$, i.e., 
\[
    \pi_k:  y_k \rightarrow \cdots \rightarrow w_k.
\]
By constraint (iii), if $w_k \in Z$, then the path $\pi_k$ can only have edges in $G_1$. Moreover, by constraints (ii) and (iv), the paths can not intersect. 
\end{proof}

\begin{proof}[Proof of Theorem~\ref{thm:sound-and-complete}]
Suppose that $G$ is LSC-identifiable. Then by Theorem~\ref{thm:LSC} 
there is a total ordering $\prec$ on $\cO$ such that $w \prec v$ whenever $w \in Z_v \cup (Y_v \cap \elr_{H_2,H_1}(Z_v \cup \{v\}))$ where $(Y^v, Z^v, H^v_1, H^v_2) \in 2^{\cO\setminus\{v\}}\times 2^{\cO\setminus \{v\}}\times 2^\cL\times 2^\cL$ is a tuple satisfying the LSC with respect to $v$. Hence, if $G$ is LSC-identifiable, we might label the elements $\{v_1, \ldots, v_d\}=\cO$ such that $v_1 \prec v_2 \prec \cdots \prec v_d$. 

Now we claim that after at most $k$ passes through the for loop in line $2$, all nodes $v_i$, $i \preceq k$, have already been added to the solved nodes $S$. We prove this by induction. Suppose that all nodes $v_1, \ldots, v_{k-1} \in S$ and we are now testing the $k$-th node $v_k$. Let $(Y^{v_k}, Z^{v_k}, H^{v_k}_1, H^{v_k}_2)$ be the triple satisfying the LSC with respect to $v_k$. At one point, we will visit the correct sets $H^{v_k}_1, H^{v_k}_2 \subseteq \cL$ in line $3$. If  $z \in Z^{v_k}$, then  $z \in S$ already and therefore $z \prec v_k$. It holds that $z \not\in \{v_k\} \cup \opa(v_k)$ by definition of the LSC and, by Lemma~\ref{lem:allowed-for-Z}, there is a latent trek from a node in $H_1^{v_k}$ to $z$ or there is a directed path in $G_{\lat}$ from a node in $H_2^{v_k}$ to $z$.
Thus, we will visit the correct set $Z^{v_k} \subseteq Z_a$ in line $5$. Now, take any $y \in Y^{v_k}$. By Lemma~\ref{lem-allowed-for-Y}, we have that $y \not\in \text{lr}_{H_2^{v_k},H_1^{v_k}}(Z^{v_k} \cup \{v_k\})$. Moreover, if $y \in \elr_{H^{v_k}_2, H^{v_k}_1}(Z^{v_k} \cup \{v_k\})$, then $y \prec v_k$ and thus $y \in S$. We conclude that $Y^{v_k} \subseteq Y_a$, and by Corollary~\ref{cor:LSC-linear-program} we will add $v_k$ to $S$. By induction, we obtain that $S=V$ after at most $|V|$ repetitions of line $2$ to $13$.

Conversely, suppose the algorithm finds $S=V$, and fix a node $v \in V$. When $v$ was added to $S$, there must have been sets  $H^v_1,H^v_2  \subseteq \cL$ and $Z^v \subseteq Z_a$ with $|Z^v|=|H_1^v|+|H_2^v|$ such that there is an integer solution of  $\text{Lp}(G^{\lp},G^{\lp}_{\lat},Z^v,\opa(v),Y_a)$  with optimal value $|Z^v| + |\opa(v)|$.  By Corollary~\ref{cor:LSC-linear-program}, we obtain that there is a set $Y^v \subseteq Y_a$ such that the tuple $(Y^v, Z^v, H^v_1, H^v_2)$ satisfies Condition~(iii) of the latent-subgraph criterion with respect to $v$. Moreover, the definitions of $Z_a$ and $Y_a$ imply that Conditions~(i) and (ii) of the latent-subgraph criterion are also satisfied. It remains to verify that all nodes $w \in Z^v \cup (Y^v \cap \elr_{H^v_2, H^v_1}(Z^v \cup \{v\}))$ were added to $S$ in the steps before. By construction, $Z_v \subseteq S$ at this stage of the algorithm. For all $w \in Y_a$ it holds that either $w \in S$ already or $w \not\in \elr_{H^v_2, H^v_1}(Z^v \cup \{v\})$. Thus, we have as well that $Y^v \cap\elr_{H^v_2, H^v_1}(Z^v \cup \{v\}) \subseteq S$ at this stage of the algorithm. Applying this reasoning to all $v \in V$, we see that $G$ is LSC-identifiable.
\end{proof}

\begin{proof}[Proof of Lemma~\ref{lem:equal-path-systems}]
Let $S=\{s_1, \ldots, s_n\}$, $A=\{a_1, \ldots, a_{\ell}\}$ and $B=\{a_{\ell+1},\ldots, a_n\}$ for $\ell \leq n$. Let $\bfP=\{P_1, \ldots, P_n\}$ such that the directed path $P_i$ has source $s_i$ and sink $a_i$. We write $\Psi=\{\psi_1,\ldots, \psi_n\}$ for the second system of directed paths, where each $\psi_i$ has source $s_i$ and sink $a_{\sigma(i)}$. Here, $\sigma$ is a permutation of the indices in $[n]$. For each $i \in [n]$, we have to show that $\psi_i=P_i$, which in particular implies $\sigma(i)=i$. 

First, consider the case where $P_i$ is the trivial path, that is, $s_i=a_i$. Since $s_i$ appears only once in the system $\bfP$, there is no edge in $\Pi$ that contains the node $s_i$. Moreover, since $\bfP(\lambda)=\Psi(\lambda)$, there is also no edge in $\Psi$ that contains the node $s_i$. Hence $\psi_i$ is also the trivial path, which implies that $a_i=s_i=a_{\sigma(i)}$. However, it must be that $\sigma(i)=i$ since $\bfP$ would have an intersection otherwise. Hence, we have that $\psi_i=P_i$.

Now, consider the case where $P_i$ is not the trivial path, that is, $P_i$ is of the form
\begin{equation} \label{eq:path-P_i}
    P_i: s_i \rightarrow z_1^i \rightarrow  z_2^i \rightarrow \cdots \rightarrow z_{k}^i = a_i.
\end{equation} 
Since $\bfP$ has no intersection and no path contains a cycle, the only edge in $\bfP$ containing $s_i$ is the edge $s_i \rightarrow z_1^i$. Since $\bfP(\lambda)=\Psi(\lambda)$, the only edge in $\Psi$ that contains the node $s_i$ is also the edge $s_i \rightarrow z_1^i$. Hence, the path $\psi_i$ must be of the form
\[
    \psi_i: s_i \rightarrow z_1^i \cdots .
\]
First, consider the case where the path $\psi_i$ consists only of the edge $s_i \rightarrow z_1^i$. Then $z_1^i \in A \cup B$, that is, $z_1^i=a_{\sigma(i)}$. But the system $\bfP$ has no intersection, which implies that we must have $z_1^i=a_i$. Since no path in $\bfP$ contains a cycle, we conclude that the path $P_i$ must also consist only of the edge $s_i \rightarrow z_1^i=a_i$. Hence, it must be that $P_i=\psi_i$. 

It remains to consider the case where the path $\psi_i$ has more than one edge, that is, it is of the form 
\[
    \psi_i: s_i \rightarrow z_1^i \rightarrow v \cdots .
\]
Since $\bfP(\lambda)=\Psi(\lambda)$, the edge $z_1^i \rightarrow v$ also has to be in $\bfP$. Since $\bfP$ has no intersection, there is no edge $a_i \rightarrow v$ in $\bfP$, and we obtain that $z_1^i \neq a_i$ and thus $k \geq 2$ in~\eqref{eq:path-P_i}. Now, observe that the only edge that starts in $z_1^i$ is the edge $ z_1^i \rightarrow  z_2^i$, that is, $v=z_2^i$. Otherwise, the system $\bfP$ either has an intersection or the path $P_i$ contains a cycle. It follows that $\psi_i$ is of the form 
\[
    \psi_i: s_i \rightarrow z_1^i \rightarrow z_2^i \cdots .
\]
We now proceed to add one edge after the other to the path $\psi_i$, applying the reasoning just used to all edges except the last one of $\psi_i$. For the last edge of $\psi_i$, we argue as in the case where $\psi_i$ contained only a single edge. This shows that $\psi_i$ must be equal to $P_i$, as displayed in \eqref{eq:path-P_i}.
\end{proof}

\begin{proof}[Proof of Lemma~\ref{lem:det-path-nonzero}]
By the definition of the matrix $L_{S, A \cup B}$, we have that the entry indexed by a node $s\in S$ and a node $a \in A$ is equal to 
\[
    (I-\Lambda_1)_{s,a} = \sum_{P \in \mathcal{P}_{G_1}(s,a)} P(\lambda),
\]
where $\mathcal{P}_{G_1}(s,a)$ is the set of directed paths from $s$ to $a$ that only take edges in the subgraph $G_1$. On the other hand, entries of $L_{S, A \cup B}$ that are indexed by $s \in S$ and $b \in B$ are equal to 
\[
    (I-\Lambda)_{s,b} = \sum_{P \in \mathcal{P}_{G}(s,b)} P(\lambda),
\]
where $\mathcal{P}_{G}(s,b)$ is the set of directed paths from $s$ to $b$ that can take edges in the entire graph $G$. Now, define the collection
\begin{align*}
    \Pi = \{\bfP : \,\, &\bfP \text{ is a system of directed paths from } S \text{ to } A \cup B \\
    & \text{ such that every path ending in } A \text{ only has edges in } G_1 \}.
\end{align*}
By the definition of the determinant, we obtain that
\[
    \det(L_{S, A \cup B}) = \sum_{\mathbf{P} \in \Pi} \sign(\mathbf{P}) \, \mathbf{P}(\lambda),
\]
where $\sign(\mathbf{P})$ is the sign of the permutation that writes the elements of $A \cup B$ in the order of their appearance as sinks of the directed paths in $\Psi$. By assumption, there exists a system $\bfP$ of directed paths with no intersection from $S$ to $A \cup B$ such that every path ending in $A$ only has edges in $G_1$. Observe that, after removing all cycles from $\bfP$, we still have a system of directed paths with no intersection from $S$ to $A \cup B$ such that every path ending in $A$ only has edges in $G_1$. Hence, we can assume w.l.o.g.~that $\bfP$ has no cycles. 

Now, take \emph{any} other system of directed paths $\Psi \in \Pi$, potentially with intersection, such that $\bfP(\lambda)=\Psi(\lambda)$. It follows from Lemma~\ref{lem:equal-path-systems} that the path systems must also coincide, that is, $\bfP=\Psi$. Therefore, the coefficient of the monomial $\bfP(\lambda)$ in $\det(L_{S, A \cup B})$ is given by $\sign(\bfP)$. In particular, $\det(L_{S, A \cup B})$ is not the zero polynomial/ power series.
\end{proof}

\section{Additional Examples} \label{sec:examples}

\subsection{Example for Parameter Matrices} \label{sec:example-parameter}
Consider the graph in Figure~\ref{fig:running-example} (a). The parameter matrices $\Lambda$ and $\Phi$ are given as
\[
    \Lambda = \begin{pmatrix}
        0 & 0 & 0 & 0 & 0 & \lambda_{v_1h_1} & 0 \\
        0 & 0 & 0 & 0 & 0 & 0 & 0 \\
        0 & 0 & 0 & \lambda_{v_3v_4} & 0 & 0 & 0 \\
        0 & 0 & 0 & 0 & 0 & 0 & 0 \\
        0 & 0 & 0 & 0 & 0 & 0 & 0 \\
        0 & 0 & 0 & 0 & 0 & 0 & \lambda_{h_1h_2} \\
        0 & \lambda_{h_2v_2} & \lambda_{h_2v_3} & \lambda_{h_2v_4} & \lambda_{h_2v_5} & 0 & 0
    \end{pmatrix}
\]
and
\[
     \Phi = \begin{pmatrix}
        \phi_{v_1} & 0 & 0 & 0 & 0 & 0 & 0 \\
        0 & \phi_{v_2} & 0 & 0 & 0 & 0 & 0 \\
        0 & 0 & \phi_{v_3} & 0 & 0 & 0 & 0 \\
        0 & 0 & 0 & \phi_{v_4} & 0 & 0 & 0 \\
        0 & 0 & 0 & 0 & \phi_{v_5} & 0 & 0 \\
        0 & 0 & 0 & 0 & 0 & \phi_{h_1} & 0 \\
        0 & 0 & 0 & 0 & 0 & 0 & \phi_{h_2}
    \end{pmatrix}.
\]
The semi-direct effect matrix $\oLambda$ is given as
\[
    \oLambda = \begin{pmatrix}
        0 & \lambda_{v_1h_1}\lambda_{h_1h_2}\lambda_{h_2v_2} & \lambda_{v_1h_1}\lambda_{h_1h_2}\lambda_{h_2v_3} & \lambda_{v_1h_1}\lambda_{h_1h_2}\lambda_{h_2v_4} & \lambda_{v_1h_1}\lambda_{h_1h_2}\lambda_{h_2v_5} \\
        0 & 0 & 0 & 0 & 0 \\
        0 & 0 & 0 & \lambda_{v_3v_4} & 0\\
        0 & 0 & 0 & 0 & 0 \\
        0 & 0 & 0 & 0 & 0 
    \end{pmatrix}.
\]
Since the observed covariance matrix $\Omega$ for the model given by the latent subgraph $G_{\lat}$ already becomes complicated, we only display the submatrix with rows and columns indexed by the subset $W = \{v_1, v_3, v_4\} \subset \cO$. It  is given by
\[
    \Omega_{W,W} =
    \begin{pmatrix}
        \phi_1 & 0 & 0\\
        \ast & \phi_{h_1}\lambda_{h_1h_2}^2\lambda_{h_2v_3}^2 + \phi_{h_2}\lambda_{h_2v_3}^2 + \phi_{v_3} & \phi_{h_1}\lambda_{h_1h_2}^2\lambda_{h_2v_3}\lambda_{h_2v_4} +  \phi_{h_2} \lambda_{h_2v_3}\lambda_{h_2v_4}\\
        \ast & \ast & \phi_{h_1}\lambda_{h_1h_2}^2\lambda_{h_2v_4}^2 + \phi_{h_2}\lambda_{h_2v_4}^2 + \phi_{v_4}
    \end{pmatrix},
\]
where the entries $\ast$ are given by symmetry. Now, the observable covariance matrix $\Sigma=(\sigma_{vw})_{v,w \in \cO}$ given by the ``original'' graph $G$ in Figure~\ref{fig:running-example} (a) also becomes complicated. We only display the entry $\sigma_{v_3v_4}$, which is given by the polynomial
\begin{align*}
    \sigma_{v_3v_4} 
    &= \phi_{v_1}\lambda_{v_1h_1}^2\lambda_{h_1h_2}^2\lambda_{h_2v_3}^2\lambda_{v_3v_4} \\
    &+ \phi_{v_1}\lambda_{v_1h_1}^2\lambda_{h_1h_2}^2\lambda_{h_2v_3}\lambda_{h_2v_4} \\
    &+ \phi_{h_1}\lambda_{h_1h_2}^2\lambda_{h_2v_3}^2\lambda_{v_3v_4} \\
    &+ \phi_{h_1}\lambda_{h_1h_2}^2\lambda_{h_2v_3}\lambda_{h_2v_4} \\
    &+ \phi_{h_2}\lambda_{h_2v_3}^2\lambda_{v_3v_4} \\
    &+ \phi_{h_2}\lambda_{h_2v_3}\lambda_{h_2v_4}\\
    &+ \phi_{v_3}\lambda_{v_3v_4}.
\end{align*}

\subsection{Identifiability of Effects between Latent Variables} \label{sec:latent-effects}
Consider the graph in Figure~\ref{fig:air-pollution-relabeled}, which is a relabeled version of the graph in Figure~\ref{fig:air-pollution}. We verified in Example~\ref{ex:id-running-example-2} that this graph is LSC-identifiable. This implies that the matrix $\Omega=(I - \oLambda)^{\top} \Sigma (I - \oLambda)$ is also rationally identifiable. We fix the variances $\phi_{h_i}=1$ for the latent variables $h_1, h_2, h_3$. Since $\Omega \in \cM(G_{\lat})$ is the observed covariance matrix in the model given by the latent subgraph $G_{\lat}$, it follows from the trek rule in~\eqref{eq:trek-rule} that
\begin{align*}
    & \frac{\omega_{v_2 v_6}\omega_{v_3 v_5}}{\omega_{v_2 v_3}\omega_{v_5 v_6} - \omega_{v_2 v_6}\omega_{v_3 v_5}} \\
    = \,\,\, & \frac{\lambda_{h_1 v_2} \lambda_{h_1 h_3} \lambda_{h_3 v_6} \lambda_{h_1 v_3} \lambda_{h_1 h_3} \lambda_{h_3 v_5}}{\lambda_{h_1 v_2} \lambda_{h_1 v_3} (\lambda_{h_3 v_5} \lambda_{h_3 v_6} + \lambda_{h_1 h_3}^2\lambda_{h_3 v_5} \lambda_{h_3 v_6}) - \lambda_{h_1 v_2} \lambda_{h_1 h_3} \lambda_{h_3 v_6} \lambda_{h_1 v_3} \lambda_{h_1 h_3} \lambda_{h_3 v_5}} \\
    = \,\,\, & \frac{\lambda_{h_1 h_3}^2 \lambda_{h_1 v_2} \lambda_{h_1 v_3} \lambda_{h_3 v_5} \lambda_{h_3 v_6}}{\lambda_{h_1 v_2} \lambda_{h_1 v_3} \lambda_{h_3 v_5} \lambda_{h_3 v_6}} \\
    = \,\,\, &  \lambda_{h_1 h_3}^2.
\end{align*}
Up to relabeling and taking the root, this is the same formula as in Equation~\eqref{eq:formula-latent-effect}. Note that the identification of $\lambda_{h_1 h_3}$ in the measurement model given by $G_{\lat}$ is also given by the identification rules in \citet[Chapter 8]{bollen1989structural}.
\end{appendix}

\bibliographystyle{apalike}
\bibliography{literature}

\end{document}